\documentclass{amsart}
\usepackage[top=3cm,bottom=3cm,left=2.5cm,right=2.5cm]{geometry}

\usepackage{graphicx} 
\graphicspath{ {./images/} }
\usepackage[nobreak]{cite}
\usepackage{amsthm}
\usepackage{amsmath}
\usepackage{amssymb}
\usepackage{mathtools}
\usepackage{thm-restate}
\usepackage{enumitem}
\usepackage{tikz}
\usepackage{tikz-cd}
\usetikzlibrary{patterns,decorations.markings,decorations.pathreplacing}
\usepackage{hyperref}
\hypersetup{
    colorlinks=true,
    linkcolor=blue,
    citecolor=blue,
    urlcolor=blue
}
\usepackage[noabbrev,capitalize,nameinlink]{cleveref}
\usepackage{ulem}
\usepackage{relsize}
\usepackage{multirow}
\usepackage{multicol}
\usepackage{caption}
\usepackage{subcaption}
\usepackage{hhline}
\usepackage{makecell}
\usepackage{dashrule}
\usepackage[section]{placeins}
\usepackage{indentfirst}
\usepackage{needspace}

\definecolor{wongblack}{RGB}{0,0,0}
\definecolor{wongorange}{RGB}{230,159,0}
\definecolor{wonglightblue}{RGB}{86,180,233}
\definecolor{wonggreen}{RGB}{0,158,115}
\definecolor{wongyellow}{RGB}{240,228,66}
\definecolor{wongblue}{RGB}{0,114,178}
\definecolor{wongvermillion}{RGB}{213,94,0}
\definecolor{wongpurple}{RGB}{204,121,167}

\crefname{prop}{Proposition}{Propositions}
\Crefname{prop}{Proposition}{Propositions}
\crefname{theo}{Theorem}{Theorems}
\Crefname{theo}{Theorem}{Theorems}
\crefname{defi}{Definition}{Definitions}
\Crefname{defi}{Definition}{Definitions}

\theoremstyle{plain}
\newtheorem{theo}{Theorem}[section]
\newtheorem{prop}[theo]{Proposition}
\newtheorem{lemm}[theo]{Lemma}

\newtheorem{coro}[theo]{Corollary}

\theoremstyle{definition}
\newtheorem{defi}[theo]{Definition}
\newtheorem{rema}[theo]{Remark}

\newtheorem{exam}[theo]{Example}

\newcommand{\CC}{\mathbb{C}}
\newcommand{\R}{\mathbb{R}}

\newcommand{\Z}{\mathbb{Z}}

\newcommand{\even}{\mathrm{even}}
\newcommand{\odd}{\mathrm{odd}}

\newcommand{\FSA}{\mathrm{FSA}}
\newcommand{\mc}{\mathrm{MC}}

\DeclareMathOperator{\inv}{inv}

\DeclareMathOperator{\ic}{\widetilde{inv}}

\DeclarePairedDelimiter{\rbra}{\lparen}{\rparen} 
\DeclarePairedDelimiterX{\Set}[2]{\lbrace}{\rbrace}{#1\,\delimsize\vert\,#2}

\newcommand{\myemph}[1]{\textcolor{wongblue}{\textit{#1}}}

\title[A Probabilistic Bijection between 20V Configurations and Triple-Free GT Patterns]{A Probabilistic Bijection between Twenty-Vertex Configurations with a Free West Boundary and Gelfand--Tsetlin Patterns Avoiding Three Equal Entries in a Row}
\author[A. Yoshida]{Atsuro Yoshida}
\address[A. Yoshida]{Fakult\"at f\"ur Mathematik, Universit\"at Wien, Oskar-Morgenstern-Platz 1, Wien 1090, Austria}
\email{atsuro.yoshida@univie.ac.at}

\date{}

\keywords{Twenty-vertex configurations, mixed six-vertex configurations, Gelfand--Tsetlin patterns, probabilistic bijections, Yang--Baxter equations.}

\subjclass[2020]{Primary 05A19; Secondary 05A15, 82B20.}

\begin{document}

\begin{abstract}
    We study a coincidence between two enumerations governed by the same product formula, reminiscent of the Robbins numbers: the unweighted enumeration of twenty-vertex configurations on quadrangular domains with fixed west boundary, and the weighted enumeration of Gelfand--Tsetlin patterns avoiding three equal entries in a row.
    This coincidence naturally raises the question of whether there is a combinatorial explanation relating these two enumerations.
    In this paper, we provide such an explanation by constructing a probabilistic bijection between twenty-vertex configurations on quadrangular domains and Gelfand--Tsetlin patterns avoiding three equal entries in a row.
    Under this probabilistic bijection, the west boundary of a twenty-vertex configuration corresponds to the bottom row of Gelfand--Tsetlin patterns; in particular, the fixed boundary case corresponds to Gelfand--Tsetlin patterns with bottom row~$(1, 2, \ldots, n)$.
    Combining this correspondence with an enumeration formula of Fischer and Schreier-Aigner for Gelfand--Tsetlin patterns avoiding three equal entries in a row with bounded entries, we obtain an enumeration formula for twenty-vertex configurations with a free west boundary.
\end{abstract}

\maketitle

\section{Introduction}
A \myemph{twenty-vertex model} (\myemph{$20$V model}) \cite{kelland1974twenty,MR998375} is an ice model on the triangular lattice whose edges are the horizontal and vertical edges of the square lattice, together with the diagonal edges connecting the northwest and southeast vertices of each square face.
A \myemph{twenty-vertex configuration} (\myemph{$20$V configuration}) is an orientation of the edges in a domain of this lattice such that every internal vertex is incident to exactly three incoming edges and three outgoing edges, a condition known as the \myemph{ice rule}.
\Cref{fig:20V-example-2} shows an example of a $20$V configuration.
\begin{figure}[htb]
    \centering
    \includegraphics[scale=0.8]{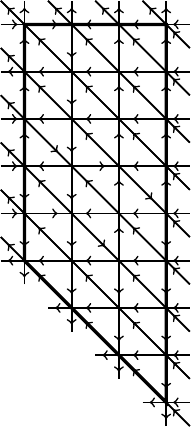}
    \caption{An example of a $20$V configuration on a quadrangular domain.}\label{fig:20V-example-2}
\end{figure}

In \cite{MR4395233}, Di Francesco studied twenty-vertex configurations on quadrangular domains.
One of the main results of that paper is that the number of $20$V configurations on a certain quadrangular domain of size $n$ coincides with the number of domino tilings of the so-called \myemph{Aztec triangle} of size $n$.
Moreover, Di Francesco conjectured that these numbers are given by the product formula
\begin{equation}
    1, 4, 60, 3328, 678912, \ldots
    = 2^{n(n-1)/2}\prod_{j=0}^{n-1} \frac{(4j+2)!}{(n+2j+1)!},
    \label{eq:DF-product-formula}
\end{equation}
which is reminiscent of the celebrated Robbins numbers
\[
\prod_{j=0}^{n-1} \frac{(3j+1)!}{(n+j)!},
\]
counting alternating sign matrices, totally symmetric self-complementary plane partitions, descending plane partitions, and alternating sign triangles \cite{MR865837,MR847558,MR700040,MR4081430}.

At the 9th International Conference on ``Lattice Path Combinatorics and Applications,'' Zeilberger announced that this conjecture had been proved by Koutschan.
The proof subsequently appeared as Section~4 of \cite{MR4773148}, co-authored with Krattenthaler and Schlosser.

Shortly thereafter, Fischer and Schreier-Aigner studied in \cite{MR4752187} a weighted enumeration of \myemph{Gelfand--Tsetlin patterns} (\myemph{GT patterns}) avoiding three equal entries in a row.
A GT pattern~$T=(T_{i,j})_{1 \leq i \leq n, 1 \leq j \leq i}$ is a triangular array of integers such that $T_{i+1, j} \leq T_{i, j} \leq T_{i+1, j+1}$ holds for all valid $i, j$.
In this paper, we refer to GT patterns avoiding three equal entries in a row (i.e., there is no $(i, j)$ with $T_{i,j}=T_{i,j+1}=T_{i,j+2}$) as \myemph{triple-free GT patterns}.
Each triple-free GT pattern~$T$ is assigned the weight~$\omega_{\FSA}(T)$ defined by $\omega_{\FSA}(T)=2^r$ where $r$ is the number of entries that are not equal to both their upper-left and upper-right neighbors.
Here is an example of a triple-free GT pattern.
\begin{center}
    \begin{tikzpicture}[scale=0.75]
        \node[outer sep=10pt] (1-1) at (1,1) {$2$};
        \node[outer sep=10pt] (2-1) at (2,1) {\tiny$\leq$};
        \node[outer sep=10pt, rectangle, draw] (3-1) at (3,1) {$4$};
        \node[outer sep=10pt] (4-1) at (4,1) {\tiny$\leq$};
        \node[outer sep=10pt] (5-1) at (5,1) {$5$};
        \node[outer sep=10pt] (6-1) at (6,1) {\tiny$\leq$};
        \node[outer sep=10pt] (7-1) at (7,1) {$8$};
        \node[outer sep=10pt] (8-1) at (8,1) {\tiny$\leq$};
        \node[outer sep=10pt] (9-1) at (9,1) {$9$};

        \node[outer sep=10pt, rotate=45] (1.5-1.5) at (1.5,1.5) {\tiny$\leq$};
        \node[outer sep=10pt, rotate=45] (3.5-1.5) at (3.5,1.5) {\tiny$\leq$};
        \node[outer sep=10pt, rotate=45] (5.5-1.5) at (5.5,1.5) {\tiny$\leq$};
        \node[outer sep=10pt, rotate=45] (7.5-1.5) at (7.5,1.5) {\tiny$\leq$};

        \node[outer sep=10pt, rotate=315] (2.5-1.5) at (2.5,1.5) {\tiny$\leq$};
        \node[outer sep=10pt, rotate=315] (4.5-1.5) at (4.5,1.5) {\tiny$\leq$};
        \node[outer sep=10pt, rotate=315] (6.5-1.5) at (6.5,1.5) {\tiny$\leq$};
        \node[outer sep=10pt, rotate=315] (8.5-1.5) at (8.5,1.5) {\tiny$\leq$};

        \node[outer sep=10pt] (2-2) at (2,2) {$4$};
        \node[outer sep=10pt] (3-2) at (3,2) {\tiny$\leq$};
        \node[outer sep=10pt] (4-2) at (4,2) {$4$};
        \node[outer sep=10pt] (5-2) at (5,2) {\tiny$\leq$};
        \node[outer sep=10pt, rectangle, draw] (6-2) at (6,2) {$6$};
        \node[outer sep=10pt] (7-2) at (7,2) {\tiny$\leq$};
        \node[outer sep=10pt] (8-2) at (8,2) {$9$};

        \node[outer sep=10pt, rotate=45] (2.5-2.5) at (2.5,2.5) {\tiny$\leq$};
        \node[outer sep=10pt, rotate=45] (4.5-2.5) at (4.5,2.5) {\tiny$\leq$};
        \node[outer sep=10pt, rotate=45] (6.5-2.5) at (6.5,2.5) {\tiny$\leq$};
        \node[outer sep=10pt, rotate=315] (3.5-2.5) at (3.5,2.5) {\tiny$\leq$};
        \node[outer sep=10pt, rotate=315] (5.5-2.5) at (5.5,2.5) {\tiny$\leq$};
        \node[outer sep=10pt, rotate=315] (7.5-2.5) at (7.5,2.5) {\tiny$\leq$};

        \node[outer sep=10pt] (3-3) at (3,3) {$4$};
        \node[outer sep=10pt] (4-3) at (4,3) {\tiny$\leq$};
        \node[outer sep=10pt] (5-3) at (5,3) {$6$};
        \node[outer sep=10pt] (6-3) at (6,3) {\tiny$\leq$};
        \node[outer sep=10pt] (7-3) at (7,3) {$6$};

        \node[outer sep=10pt, rotate=45] (3.5-3.5) at (3.5,3.5) {\tiny$\leq$};
        \node[outer sep=10pt, rotate=45] (5.5-3.5) at (5.5,3.5) {\tiny$\leq$};
        \node[outer sep=10pt, rotate=315] (4.5-3.5) at (4.5,3.5) {\tiny$\leq$};
        \node[outer sep=10pt, rotate=315] (6.5-3.5) at (6.5,3.5) {\tiny$\leq$};

        \node[outer sep=10pt] (4-4) at (4,4) {$5$};
        \node[outer sep=10pt] (5-4) at (5,4) {\tiny$\leq$};
        \node[outer sep=10pt] (6-4) at (6,4) {$6$};

        \node[outer sep=10pt, rotate=45] (4.5-4.5) at (4.5,4.5) {\tiny$\leq$};
        \node[outer sep=10pt, rotate=315] (5.5-4.5) at (5.5,4.5) {\tiny$\leq$};

        \node[outer sep=10pt] (5-5) at (5,5) {$5$};
    \end{tikzpicture}
\end{center}
In this example, the entries that are equal to both their upper-left and  neighbors are drawn boxed, and the weight~$\omega_{\FSA}(T)$ is computed as $2^{15-2}=2^{13}$.
Fischer and Schreier-Aigner showed that the weighted enumeration of triple-free GT patterns with $n$ rows and bottom row~$(1, 2, \ldots, n)$ with respect to this weight equals $2^n$ times \eqref{eq:DF-product-formula}.

Thus, there are currently three distinct classes of combinatorial objects known to be enumerated by the same product formula~\eqref{eq:DF-product-formula}, among which no explicit bijection is known.

The present paper has two purposes.
The first purpose is to provide a combinatorial explanation for the coincidence of two of the three classes of combinatorial objects having the same (weighted) enumeration formula, namely $20$V configurations and triple-free GT patterns, by using the concept of a \myemph{probabilistic bijection}.

A probabilistic bijection, as used in \cite{MR3874706,MR4031106,MR4311960,MR4818707}, is a probabilistic generalization of bijections between finite sets.
If there is a probabilistic bijection between two weighted finite sets, it follows that the weighted enumerations of the two sets are equal, similar to the case of ordinary bijections between two unweighted finite sets.
We construct the probabilistic bijection by composing two probabilistic bijections between the following objects:
\begin{enumerate}[label=(\alph*)]
    \item $20$V configurations on the quadrangular domain;
    \item \myemph{mixed six-vertex configurations} on the rectangle domain;
    \item triple-free GT patterns.
\end{enumerate}
The probabilistic bijection between (a) and (b) is obtained by interpreting \myemph{Yang--Baxter moves} in \cite{MR4395233} as probabilistic maps, which are then composed.
The probabilistic bijection between (b) and (c) is constructed by interpreting a certain surjective map as a probabilistic bijection.
By composing these two probabilistic bijections, we obtain a probabilistic bijection between the set of $20$V configurations and the set of triple-free GT patterns.

Fischer and Schreier-Aigner {\cite{MR4752187}} considered a non-negative parameter~$m$ as an upper bound of the entries on a GT pattern.
The second purpose of this paper is to reinterpret this parameter~$m$ in the setting of $20$V configurations.

Under the aforementioned probabilistic bijection, the positions of incoming horizontal edges on the west boundary of a $20$V configuration determine the bottom row of the corresponding triple-free GT patterns.
Since the entries of a GT pattern are bounded above by the rightmost entry in the bottom row, the parameter~$m$ determines the highest possible position of an incoming horizontal edge on the west boundary.

For example, in \cref{fig:20V-config-example}, the horizontal edges at positions~$2, 3, 4, 6$ on the west boundary are incoming, while the other horizontal edges are outgoing.
Hence, this $20$V configuration corresponds to triple-free GT patterns with bottom row $(2,3,4,6)$ through the probabilistic bijection.

This setting is more general than the fixed west boundary case studied in \cite{MR4245068,MR4395233}, where all incoming edges appear consecutively from bottom to top without gaps.

Consequently, we obtain the following result.

\begin{theo} \label{theo:extended-20V-almost-monotone-triangle-correspondence}
    Let $\mathbf{k} = (k_1, k_2, \ldots, k_n)$ be a strictly increasing sequence of positive integers.
    The number of $20$V configurations on the quadrangular domain~$\mathcal{Q}_\mathbf{k}$, where the horizontal boundary edges on the west boundary at positions~$k_1,k_2,\ldots,k_n$ are incoming, while the remaining horizontal boundary edges are outgoing, is equal to $2^{-n}$ times the weighted enumeration of triple-free GT patterns with bottom row~$\mathbf{k}$ with respect to the weight function~$\omega_{\FSA}(\cdot)$.
\end{theo}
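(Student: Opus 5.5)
We will prove the statement by composing two probabilistic bijections, as announced in the introduction, going through mixed six-vertex configurations on a rectangular domain. The starting fact is that a probabilistic bijection between weighted finite sets preserves total weight. We therefore need three ingredients: (i) a probabilistic bijection between $20$V configurations on $\mathcal{Q}_\mathbf{k}$, each with weight $1$, and mixed six-vertex configurations on a rectangle with the corresponding boundary data; (ii) a probabilistic bijection between these mixed six-vertex configurations, suitably weighted, and triple-free GT patterns with bottom row $\mathbf{k}$ weighted by $2^{-n}\omega_{\FSA}$; (iii) a check that the west boundary data, meaning which horizontal edges at positions $k_1,\dots,k_n$ are incoming, is carried through both steps to the bottom row $\mathbf{k}$.

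For (i), we follow Di Francesco's reduction. A $20$V configuration on the quadrangular domain is rewritten as a configuration of three interlaced families of six-vertex lines: horizontal, vertical and diagonal. Each diagonal line is then pushed through the lattice by a sequence of Yang--Baxter moves until the domain becomes a rectangle, where the diagonal lines are absorbed into a "mixed" six-vertex model with two kinds of rows. At each move, the Yang--Baxter equation at the combinatorial point (all weights equal, or with the specific $20$V weights) states that the weighted sums over local configurations with fixed outer edges agree on both sides. We turn each equation into a probabilistic map: a local configuration on the left is sent to a configuration on the right with probability proportional to its weight. This is a probabilistic bijection precisely because the two sides have equal total weight for each boundary condition. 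Composing these maps gives the probabilistic bijection for (i). Throughout, the west boundary is unaffected except for the relabelling of rows, so positions $k_i$ go to positions $k_i$ on the rectangle.

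For (ii), we define a map from mixed six-vertex configurations to GT patterns. Row $i$ of the pattern lists the positions of the vertical up-arrows (or incoming edges) after the $i$-th pair of rows. The interlacing $T_{i+1,j}\le T_{i,j}\le T_{i+1,j+1}$ follows from the ice rule, since each row pair can shift each path by at most one slot. The two kinds of rows account for the fact that a triple of equal entries would require two consecutive shifts that the mixed model forbids, so the image consists exactly of the triple-free patterns. This map is surjective, not injective. The multiplicity of the fibre over $T$ is computed by observing that each entry not equal to both its upper-left and upper-right neighbours admits two local choices of configuration, while boxed entries and forced positions admit one. The fibre then has size $2^{r}$ up to a global factor $2^{\pm n}$ from the top row, which gives the correction $2^{-n}$. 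A surjection whose fibre sizes equal the target weights is a probabilistic bijection: each pattern is sent to a uniform element of its fibre, and the inverse direction is deterministic.

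The main obstacle is this fibre count, and in particular pinning down the exact factor $2^{-n}$, together with verifying that the image is exactly the set of triple-free patterns. I would first do a local case analysis on a pair of consecutive rows, classifying entries as equal to the left neighbour, equal to the right neighbour, equal to both, or equal to neither. I would then count the configurations of the mixed rows compatible with each case. Finally, I would check the count on the small cases $n=1,2$ against the values $1,4$ in \eqref{eq:DF-product-formula} times $2^n$ to fix the normalization.
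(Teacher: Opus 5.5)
Your architecture matches the paper's: Yang--Baxter moves read as probabilistic maps from $\mathcal{Q}_{\mathbf{k}}$ to a mixed six-vertex rectangle, then a surjection onto triple-free GT patterns, then composition and weight preservation. What is missing is the step that carries the argument: you never determine the weight that step (i) puts on mixed configurations, and step (ii) treats them as uniformly weighted. They are not uniformly weighted. The Yang--Baxter equation needs specific non-uniform degree-$4$ weights; with all weights equal, the local identity $c_1a_2a_3=a_1c_2c_3+c_1b_2b_3$ would read $1=2$. The frozen regions also split off a constant $\mathcal{C}_{\mathbf{k}}$. So the chain of moves delivers configurations $x$ on $\mathcal{M}_{\mathbf{k}}$, with incoming west edges at rows $2k_i-1$ rather than $k_i$, weighted by $\mathcal{C}_{\mathbf{k}}\,\omega_{\mathcal{M}_{\mathbf{k}}}(x)$, a product of complex vertex weights. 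Two probabilistic bijections compose only if they use the same weight on the middle set. You therefore need the key identity $\mathcal{C}_{\mathbf{k}}\,\omega_{\mathcal{M}_{\mathbf{k}}}(x)=2^{\ic(x)}$. Here $\ic(x)$ counts vertices in even rows where two paths touch (all four edges occupied), plus vertices in odd rows crossed straight horizontally by a path. The paper proves this by induction on corner flips, starting from the configuration where each path goes right and then straight down, and checking that each flip multiplies the weight by $2^{\ic(x')-\ic(x)}$.

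As a result, your fibre count in (ii) is the wrong statement, and it is false. Take $\mathbf{k}=(1,2)$. There are three mixed configurations, corresponding to top entries $1,\overline{1},2$ in the alphabet $1<\overline{1}<2$. The two triple-free patterns both have $2^{-2}\omega_{\FSA}=2$, yet any surjection from three objects onto two has fibres of sizes $2$ and $1$. So no uniform weighting works, and no global factor $2^{\pm n}$ repairs it. Similarly, for the pattern with rows $(2),(2,3),(2,3,3),(1,2,3,4)$ the fibre has $6$ elements, while $2^{-4}\omega_{\FSA}(T')=2^5$.

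What holds is the weighted identity $\sum_{x\in\psi^{-1}(T')}2^{\ic(x)}=2^{-n}\omega_{\FSA}(T')$. The backward map is then $\overline{\mathcal{P}}(x,T')=2^{\ic(x)}\big/\bigl(2^{-n}\omega_{\FSA}(T')\bigr)$, not the uniform distribution on the fibre. Proving it takes four steps. First, read the GT rows from the $n$ \emph{columns} (where paths enter column $n-i+1$), with the rows of $\mathcal{M}_{\mathbf{k}}$ relabelled $1<\overline{1}<2<\overline{2}<\cdots$; reading from pairs of rows does not even give $n$ rows. This yields a bijection onto monotone triangles in the doubled alphabet, and triple-freeness after erasing bars is just strict increase, not a forbidden double shift. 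Second, express $\ic$ as the number of $\overline{t}$'s with an equal entry directly below-left, plus unbarred $t$'s with an equal entry directly below-right. Third, split into maximal connected blocks: rows containing $t\ t$ are forced to be $t\ \overline{t}$ and contribute a fixed factor. Fourth, show that for each zigzag part, viewed as an order ideal $I$ of a fence $F$, $\sum_I 2^{\ic(I)}=2^{\# F}$. Your ``two choices per unboxed entry'' heuristic cannot replace this. Fixing the normalization against the product formula for $n=1,2$ would not prove the identity for general $\mathbf{k}$.
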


Here, we emphasize that we do not use the explicit formula for the number of $20$V configurations with fixed west boundary nor the explicit formula for the weighted enumeration of triple-free GT patterns with fixed bottom row in the proof of this theorem; the proof is based solely on the construction of the probabilistic bijection.

It is also proven in \cite{MR4752187} that the weighted enumeration of triple-free GT patterns with bounded entries is given by a product formula.
Using our probabilistic bijection to transfer this result to $20$V configurations with a free west boundary, we obtain the following theorem.

\begin{theo} \label{theo:free-left-boundary-20V-enumeration}
    Let $m$ be a non-negative integer.
    We have
    \[
    \sum_{\substack{\mathbf{k}=(k_1,\dots,k_n) \\ 1 \le k_1 < \dots < k_n \le m+1}} \#\{\text{$20$V configurations on $\mathcal{Q}_\mathbf{k}$}\}
    = \prod_{i=1}^n \frac{(m-n+3i+1)_{i-1} (m-n+i+1)_{i}}{(\frac{m-n+i+2}{2})_{i-1} (i)_i},
    \]
    where $(a)_k \coloneqq a(a+1)(a+2) \cdots (a+k-1)$ is the Pochhammer notation.
\end{theo}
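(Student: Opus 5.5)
The plan is to derive \cref{theo:free-left-boundary-20V-enumeration} directly from \cref{theo:extended-20V-almost-monotone-triangle-correspondence} and the bounded-entries enumeration of Fischer and Schreier-Aigner \cite{MR4752187}. Applying \cref{theo:extended-20V-almost-monotone-triangle-correspondence} to each strictly increasing $\mathbf{k}$ with $1\le k_1<\dots<k_n\le m+1$ and summing gives
\[
\sum_{\mathbf{k}} \#\{\text{$20$V configurations on $\mathcal{Q}_\mathbf{k}$}\}
= 2^{-n}\sum_{\mathbf{k}} \sum_{T} \omega_{\FSA}(T),
\]
where $T$ runs over triple-free GT patterns with bottom row $\mathbf{k}$. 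Since the factor $2^{-n}$ does not depend on $\mathbf{k}$, the right-hand side is $2^{-n}$ times the total $\omega_{\FSA}$-weight of all triple-free GT patterns with $n$ rows whose bottom row is strictly increasing with entries in $\{1,\dots,m+1\}$.

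Next I would identify this set with the one counted in \cite{MR4752187}. Every entry of a GT pattern lies between $T_{n,1}$ and $T_{n,n}$, so bounding the bottom row between $1$ and $m+1$ bounds all entries. Strictness of the bottom row is not a separate condition: the bottom row has no three equal consecutive entries, but two equal adjacent entries are allowed in general. So I must check carefully what Fischer and Schreier-Aigner fix. Their bounded enumeration, with the parameter $m$, is (I expect) for patterns with strictly increasing bottom row and entries in $\{1,\dots,m+1\}$, i.e. a shift of $\{0,\dots,m\}$. This is consistent with their fixed-row case: for bottom row $(1,\dots,n)$ it reproduces $2^n$ times \eqref{eq:DF-product-formula}.

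The last step is to quote their product formula and cancel the factor $2^{n}$ against $2^{-n}$. The result should be exactly $\prod_{i=1}^n \frac{(m-n+3i+1)_{i-1}(m-n+i+1)_i}{(\frac{m-n+i+2}{2})_{i-1}(i)_i}$. As a sanity check, take $m=n-1$. Then only $\mathbf{k}=(1,\dots,n)$ contributes, and the product should reduce to \eqref{eq:DF-product-formula}.

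The main obstacle is matching conventions, not mathematics. I need the precise shift between their entry range and $\{1,\dots,m+1\}$, the precise form of $\omega_{\FSA}$ and its normalization (including any power of $2$ absorbed into their formula), and whether their formula requires a strictly increasing bottom row. I would settle this by comparing small cases ($n=1$, where the sum should be $m+1$, and $n=2$) against the stated product.
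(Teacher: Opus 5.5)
Your proposal is correct and is essentially the paper's proof. Sum \cref{theo:extended-20V-almost-monotone-triangle-correspondence} over all strictly increasing $\mathbf{k}$ with $k_n\le m+1$, and shift all entries down by one, which leaves $\omega_{\FSA}$ unchanged. This gives $2^{-n}$ times the weighted count of triple-free GT patterns with strictly increasing non-negative bottom row bounded by $m$, which is exactly the convention you anticipated. The paper then quotes Theorem~1 and Proposition~3 of \cite{MR4752187} for this count.
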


Setting $m = n-1$ in this theorem, $\mathbf{k}$ is forced to be $(1,2,\ldots,n)$ and we recover the product formula~\eqref{eq:DF-product-formula} for $20$V configurations with the horizontal boundary edges on the west boundary at positions~$1,2,\ldots,n$.

\textbf{This paper is organized as follows.}
In \cref{sec:combinatorial-objects}, we define $20$V configurations and the mixed $6$V configurations.
In \cref{sec:local-prob-map}, we introduce probabilistic bijections and construct a probabilistic bijection between $20$V configurations and mixed $6$V configurations using Yang--Baxter moves.
In \cref{sec:comb-desc}, we express the weights of mixed $6$V configurations in terms of an integer-valued statistic that we call the \myemph{variant inversion number}.
In \cref{sec:map-mixed-6V-to-amtri}, we construct a surjective map from mixed $6$V configurations to triple-free GT patterns and interpret it as a probabilistic bijection.
Finally, in \cref{sec:proof-of-main-results}, we combine these probabilistic bijections to prove \cref{theo:extended-20V-almost-monotone-triangle-correspondence,theo:free-left-boundary-20V-enumeration}.

\Needspace{8\baselineskip}
\section{Twenty-vertex and mixed six-vertex configurations} \label{sec:combinatorial-objects}
\subsection{Twenty-vertex configurations on quadrangular domains}
Here we define $20$V configurations.
The reader may find it helpful to compare the definition with the example shown in \cref{fig:20V-config-example}.

\begin{defi}
    We consider the triangular lattice consisting of the standard horizontal and vertical edges in $\Z^2$ plus the diagonal edges connecting $(i, j)$ and $(i+1, j-1)$ for all $i, j \in \Z$.

    Let $\mathbf{k} = (k_1, k_2, \ldots, k_n)$ be a strictly increasing sequence of positive integers.
    Define the quadrangular domain $\mathcal{Q}_{\mathbf{k}}$ to be the domain in the triangular lattice whose lattice points are in the closed quadrangle bounded by the lines
    \begin{equation*}
        \qquad j=k_n \quad (\text{north}),\qquad i+j=2 \quad (\text{south}),\qquad i=n \quad (\text{east}), \qquad i=1 \quad (\text{west}).
    \end{equation*}
    Its internal edges are the edges of the triangular lattice whose endpoints both lie in $\mathcal{Q}_{\mathbf{k}}$, and its boundary edges are those with exactly one endpoint in $\mathcal{Q}_{\mathbf{k}}$.
    For a boundary edge, incoming and outgoing are understood with respect to its endpoint in $\mathcal{Q}_{\mathbf{k}}$; the other endpoint is not regarded as a vertex of the domain.

    The boundary edges are oriented according to the following boundary conditions:
    \begin{itemize}
        \item North boundary: All boundary edges are outgoing;
        \item South boundary: All boundary edges are outgoing;
        \item East boundary: All boundary edges are incoming;
        \item West boundary: The horizontal boundary edges at positions~$k_1, \ldots, k_n$, counted from the bottom, are incoming, while the other horizontal boundary edges and all diagonal boundary edges are outgoing.
    \end{itemize}

    A \myemph{$20$V configuration} of $\mathcal{Q}_{\mathbf{k}}$ is an orientation of the internal edges which, together with the fixed orientations of the boundary edges, satisfies the \myemph{ice rule} at every lattice point of $\mathcal{Q}_{\mathbf{k}}$, that is, each such point is incident to exactly three incoming and three outgoing edges.
\end{defi}

\begin{figure}[hbt]
    \centering
    \includegraphics[scale=0.8]{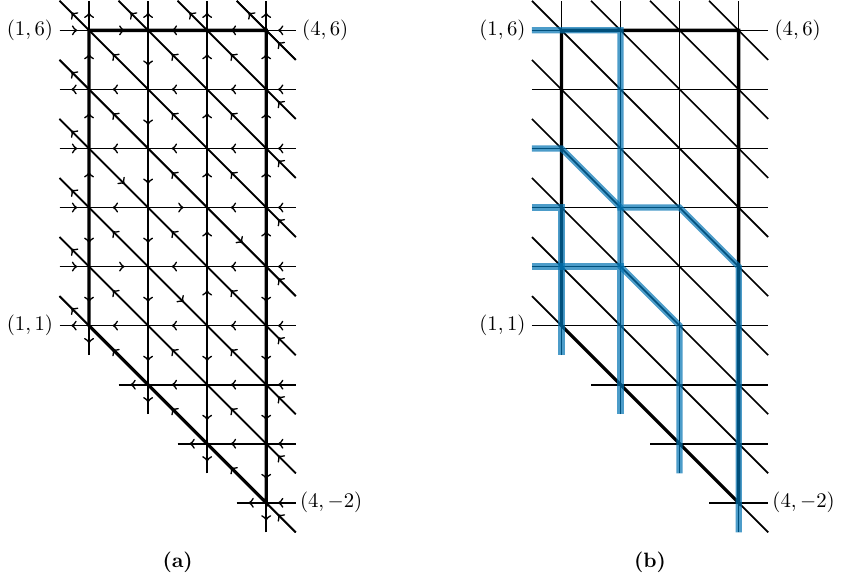}
    \caption{(a) An example of a $20$V configuration for $\mathbf{k}=(2,3,4,6)$. (b) The path representation of (a), in which the edges occupied by the paths are colored blue.} \label{fig:20V-config-example}
\end{figure}

In the special case $\mathbf{k} = (1, 2, \ldots, n)$, the domain~$\mathcal{Q}_{(1, 2, \ldots, n)}$ coincides with the quadrangular domain studied in \cite{MR4245068,MR4395233}.

Every $20$V configuration on $\mathcal{Q}_{\mathbf{k}}$ can be identified with a family of $n$ osculating Schr\"{o}der paths, where the $\ell$-th path starts at $(1,k_\ell)$ and ends at $(\ell,2-\ell)$ for $\ell = 1,2,\ldots,n$.
An edge of the domain, including a boundary edge, is used by the path family if and only if it is oriented rightward, downward, or southeastward.
See \cref{tab:conf-path-identification-20V} for the correspondence between local edge orientations and path segments around a vertex.
Throughout this paper, we represent $20$V configurations interchangeably as edge orientations and as families of osculating Schr\"{o}der paths.

\begin{table}[htb]
    \centering
    \begin{tabular}{c|c|c|c|c|c|c|c|c|c}
        \begin{tikzpicture}[scale=0.55,very thick,decoration={
        markings,
        mark=at position 0.6 with {\arrow[scale=0.8]{>}}}
        ]
            \draw[postaction={decorate}] (-1,0) -- (0,0);
            \draw[postaction={decorate}] (0,0) -- (1,0);
            \draw[postaction={decorate}] (0,1) -- (0,0);
            \draw[postaction={decorate}] (0,0) -- (0,-1);
            \draw[postaction={decorate}] (-1,1) -- (0,0);
            \draw[postaction={decorate}] (0,0) -- (1,-1);
        \end{tikzpicture}
        &
        \begin{tikzpicture}[scale=0.55,shift={(2.5,0)},
        very thick,decoration={
        markings,
        mark=at position 0.6 with {\arrow[scale=0.8]{>}}}
        ]
            \draw[postaction={decorate}] (-1,0) -- (0,0);
            \draw[postaction={decorate}] (0,0) -- (1,0);
            \draw[postaction={decorate}] (0,0) -- (0,1);
            \draw[postaction={decorate}] (0,-1) -- (0,0);
            \draw[postaction={decorate}] (-1,1) -- (0,0);
            \draw[postaction={decorate}] (0,0) -- (1,-1);
        \end{tikzpicture}
        &
        \begin{tikzpicture}[scale=0.55,shift={(5,0)},
        very thick,decoration={
        markings,
        mark=at position 0.6 with {\arrow[scale=0.8]{>}}}
        ]
            \draw[postaction={decorate}] (-1,0) -- (0,0);
            \draw[postaction={decorate}] (0,0) -- (1,0);
            \draw[postaction={decorate}] (0,0) -- (0,1);
            \draw[postaction={decorate}] (0,0) -- (0,-1);
            \draw[postaction={decorate}] (-1,1) -- (0,0);
            \draw[postaction={decorate}] (1,-1) -- (0,0);
        \end{tikzpicture}
        &
        \begin{tikzpicture}[scale=0.55,shift={(7.5,0)},
        very thick,decoration={
        markings,
        mark=at position 0.6 with {\arrow[scale=0.8]{>}}}
        ]
            \draw[postaction={decorate}] (-1,0) -- (0,0);
            \draw[postaction={decorate}] (1,0) -- (0,0);
            \draw[postaction={decorate}] (0,0) -- (0,1);
            \draw[postaction={decorate}] (0,0) -- (0,-1);
            \draw[postaction={decorate}] (-1,1) -- (0,0);
            \draw[postaction={decorate}] (0,0) -- (1,-1);
        \end{tikzpicture}
        &
        \begin{tikzpicture}[scale=0.55,shift={(10,0)},
        very thick,decoration={
        markings,
        mark=at position 0.6 with {\arrow[scale=0.8]{>}}}
        ]
            \draw[postaction={decorate}] (-1,0) -- (0,0);
            \draw[postaction={decorate}] (0,0) -- (1,0);
            \draw[postaction={decorate}] (0,1) -- (0,0);
            \draw[postaction={decorate}] (0,-1) -- (0,0);
            \draw[postaction={decorate}] (0,0) -- (-1,1);
            \draw[postaction={decorate}] (0,0) -- (1,-1);
        \end{tikzpicture}
        &
        \begin{tikzpicture}[scale=0.55,shift={(12.5,0)},
        very thick,decoration={
        markings,
        mark=at position 0.6 with {\arrow[scale=0.8]{>}}}
        ]
            \draw[postaction={decorate}] (-1,0) -- (0,0);
            \draw[postaction={decorate}] (0,0) -- (1,0);
            \draw[postaction={decorate}] (0,1) -- (0,0);
            \draw[postaction={decorate}] (0,0) -- (0,-1);
            \draw[postaction={decorate}] (0,0) -- (-1,1);
            \draw[postaction={decorate}] (1,-1) -- (0,0);
        \end{tikzpicture}
        &
        \begin{tikzpicture}[scale=0.55,
        very thick,decoration={
        markings,
        mark=at position 0.6 with {\arrow[scale=0.8]{>}}}
        ]
            \draw[postaction={decorate}] (-1,0) -- (0,0);
            \draw[postaction={decorate}] (1,0) -- (0,0);
            \draw[postaction={decorate}] (0,1) -- (0,0);
            \draw[postaction={decorate}] (0,0) -- (0,-1);
            \draw[postaction={decorate}] (0,0) -- (-1,1);
            \draw[postaction={decorate}] (0,0) -- (1,-1);
        \end{tikzpicture}
        &
        \begin{tikzpicture}[scale=0.55,
        very thick,decoration={
        markings,
        mark=at position 0.6 with {\arrow[scale=0.8]{>}}}
        ]
            \draw[postaction={decorate}] (-1,0) -- (0,0);
            \draw[postaction={decorate}] (0,0) -- (1,0);
            \draw[postaction={decorate}] (0,0) -- (0,1);
            \draw[postaction={decorate}] (0,-1) -- (0,0);
            \draw[postaction={decorate}] (0,0) -- (-1,1);
            \draw[postaction={decorate}] (1,-1) -- (0,0);
        \end{tikzpicture}
        &
        \begin{tikzpicture}[scale=0.55,
        very thick,decoration={
        markings,
        mark=at position 0.6 with {\arrow[scale=0.8]{>}}}
        ]
            \draw[postaction={decorate}] (-1,0) -- (0,0);
            \draw[postaction={decorate}] (1,0) -- (0,0);
            \draw[postaction={decorate}] (0,0) -- (0,1);
            \draw[postaction={decorate}] (0,-1) -- (0,0);
            \draw[postaction={decorate}] (0,0) -- (-1,1);
            \draw[postaction={decorate}] (0,0) -- (1,-1);
        \end{tikzpicture}
        &
        \begin{tikzpicture}[scale=0.55,
        very thick,decoration={
        markings,
        mark=at position 0.6 with {\arrow[scale=0.8]{>}}}
        ]
            \draw[postaction={decorate}] (-1,0) -- (0,0);
            \draw[postaction={decorate}] (1,0) -- (0,0);
            \draw[postaction={decorate}] (0,0) -- (0,1);
            \draw[postaction={decorate}] (0,0) -- (0,-1);
            \draw[postaction={decorate}] (0,0) -- (-1,1);
            \draw[postaction={decorate}] (1,-1) -- (0,0);
        \end{tikzpicture}
        \\
        \begin{tikzpicture}[scale=0.55]
            \draw[color=wongblue, ultra thick] (-1,0) -- (0,0);
            \draw[color=wongblue, ultra thick] (0,0) -- (1,0);
            \draw[color=wongblue, ultra thick] (0,1) -- (0,0);
            \draw[color=wongblue, ultra thick] (0,0) -- (0,-1);
            \draw[color=wongblue, ultra thick] (-1,1) -- (1,-1);
        \end{tikzpicture}
        &
        \begin{tikzpicture}[scale=0.55]
            \draw[color=wongblue, ultra thick] (-1,0) -- (0,0);
            \draw[color=wongblue, ultra thick] (0,0) -- (1,0);
            \draw (0,1) -- (0,0);
            \draw (0,0) -- (0,-1);
            \draw[color=wongblue, ultra thick] (-1,1) -- (1,-1);
        \end{tikzpicture}
        &
        \begin{tikzpicture}[scale=0.55]
            \draw[color=wongblue, ultra thick] (-1,0) -- (0,0);
            \draw[color=wongblue, ultra thick] (0,0) -- (1,0);
            \draw (0,1) -- (0,0);
            \draw[color=wongblue, ultra thick] (0,0) -- (0,-1);
            \draw[color=wongblue, ultra thick] (-1,1) -- (0,0);
            \draw (0,0) -- (1,-1);
        \end{tikzpicture}
        &
        \begin{tikzpicture}[scale=0.55]
            \draw[color=wongblue, ultra thick] (-1,0) -- (0,0);
            \draw (0,0) -- (1,0);
            \draw (0,1) -- (0,0);
            \draw[color=wongblue, ultra thick] (0,0) -- (0,-1);
            \draw[color=wongblue, ultra thick] (-1,1) -- (0,0);
            \draw[color=wongblue, ultra thick] (0,0) -- (1,-1);
        \end{tikzpicture}
        &
        \begin{tikzpicture}[scale=0.55]
            \draw[color=wongblue, ultra thick] (-1,0) -- (0,0);
            \draw[color=wongblue, ultra thick] (0,0) -- (1,0);
            \draw[color=wongblue, ultra thick] (0,1) -- (0,0);
            \draw (0,0) -- (0,-1);
            \draw (-1,1) -- (0,0);
            \draw[color=wongblue, ultra thick] (0,0) -- (1,-1);
        \end{tikzpicture}
        &
        \begin{tikzpicture}[scale=0.55]
            \draw[color=wongblue, ultra thick] (-1,0) -- (0,0);
            \draw[color=wongblue, ultra thick] (0,0) -- (1,0);
            \draw[color=wongblue, ultra thick] (0,1) -- (0,0);
            \draw[color=wongblue, ultra thick] (0,0) -- (0,-1);
            \draw (-1,1) -- (0,0);
            \draw (0,0) -- (1,-1);
        \end{tikzpicture}
        &
        \begin{tikzpicture}[scale=0.55]
            \draw[color=wongblue, ultra thick] (-1,0) -- (0,0);
            \draw (0,0) -- (1,0);
            \draw[color=wongblue, ultra thick] (0,1) -- (0,0);
            \draw[color=wongblue, ultra thick] (0,0) -- (0,-1);
            \draw (-1,1) -- (0,0);
            \draw[color=wongblue, ultra thick] (0,0) -- (1,-1);
        \end{tikzpicture}
        &
        \begin{tikzpicture}[scale=0.55]
            \draw[color=wongblue, ultra thick] (-1,0) -- (0,0);
            \draw[color=wongblue, ultra thick] (0,0) -- (1,0);
            \draw (0,1) -- (0,0);
            \draw (0,0) -- (0,-1);
            \draw (-1,1) -- (0,0);
            \draw (0,0) -- (1,-1);
        \end{tikzpicture}
        &
        \begin{tikzpicture}[scale=0.55]
            \draw[color=wongblue, ultra thick] (-1,0) -- (0,0);
            \draw (0,0) -- (1,0);
            \draw (0,1) -- (0,0);
            \draw (0,0) -- (0,-1);
            \draw (-1,1) -- (0,0);
            \draw[color=wongblue, ultra thick] (0,0) -- (1,-1);
        \end{tikzpicture}
        &
        \begin{tikzpicture}[scale=0.55]
            \draw[color=wongblue, ultra thick] (-1,0) -- (0,0);
            \draw (0,0) -- (1,0);
            \draw (0,1) -- (0,0);
            \draw[color=wongblue, ultra thick] (0,0) -- (0,-1);
            \draw (-1,1) -- (0,0);
            \draw (0,0) -- (1,-1);
        \end{tikzpicture}
        \\
        \hline
        \begin{tikzpicture}[scale=0.55,very thick,decoration={
        markings,
        mark=at position 0.4 with {\arrowreversed[scale=0.8]{>}}}
        ]
            \node at (0,1.5) {};
            \draw[postaction={decorate}] (-1,0) -- (0,0);
            \draw[postaction={decorate}] (0,0) -- (1,0);
            \draw[postaction={decorate}] (0,1) -- (0,0);
            \draw[postaction={decorate}] (0,0) -- (0,-1);
            \draw[postaction={decorate}] (-1,1) -- (0,0);
            \draw[postaction={decorate}] (0,0) -- (1,-1);
        \end{tikzpicture}
        &
        \begin{tikzpicture}[scale=0.55,shift={(2.5,0)},
        very thick,decoration={
        markings,
        mark=at position 0.4 with {\arrowreversed[scale=0.8]{>}}}
        ]
            \draw[postaction={decorate}] (-1,0) -- (0,0);
            \draw[postaction={decorate}] (0,0) -- (1,0);
            \draw[postaction={decorate}] (0,0) -- (0,1);
            \draw[postaction={decorate}] (0,-1) -- (0,0);
            \draw[postaction={decorate}] (-1,1) -- (0,0);
            \draw[postaction={decorate}] (0,0) -- (1,-1);
        \end{tikzpicture}
        &
        \begin{tikzpicture}[scale=0.55,shift={(5,0)},
        very thick,decoration={
        markings,
        mark=at position 0.4 with {\arrowreversed[scale=0.8]{>}}}
        ]
            \draw[postaction={decorate}] (-1,0) -- (0,0);
            \draw[postaction={decorate}] (0,0) -- (1,0);
            \draw[postaction={decorate}] (0,0) -- (0,1);
            \draw[postaction={decorate}] (0,0) -- (0,-1);
            \draw[postaction={decorate}] (-1,1) -- (0,0);
            \draw[postaction={decorate}] (1,-1) -- (0,0);
        \end{tikzpicture}
        &
        \begin{tikzpicture}[scale=0.55,shift={(7.5,0)},
        very thick,decoration={
        markings,
        mark=at position 0.4 with {\arrowreversed[scale=0.8]{>}}}
        ]
            \draw[postaction={decorate}] (-1,0) -- (0,0);
            \draw[postaction={decorate}] (1,0) -- (0,0);
            \draw[postaction={decorate}] (0,0) -- (0,1);
            \draw[postaction={decorate}] (0,0) -- (0,-1);
            \draw[postaction={decorate}] (-1,1) -- (0,0);
            \draw[postaction={decorate}] (0,0) -- (1,-1);
        \end{tikzpicture}
        &
        \begin{tikzpicture}[scale=0.55,shift={(10,0)},
        very thick,decoration={
        markings,
        mark=at position 0.4 with {\arrowreversed[scale=0.8]{>}}}
        ]
            \draw[postaction={decorate}] (-1,0) -- (0,0);
            \draw[postaction={decorate}] (0,0) -- (1,0);
            \draw[postaction={decorate}] (0,1) -- (0,0);
            \draw[postaction={decorate}] (0,-1) -- (0,0);
            \draw[postaction={decorate}] (0,0) -- (-1,1);
            \draw[postaction={decorate}] (0,0) -- (1,-1);
        \end{tikzpicture}
        &
        \begin{tikzpicture}[scale=0.55,shift={(12.5,0)},
        very thick,decoration={
        markings,
        mark=at position 0.4 with {\arrowreversed[scale=0.8]{>}}}
        ]
            \draw[postaction={decorate}] (-1,0) -- (0,0);
            \draw[postaction={decorate}] (0,0) -- (1,0);
            \draw[postaction={decorate}] (0,1) -- (0,0);
            \draw[postaction={decorate}] (0,0) -- (0,-1);
            \draw[postaction={decorate}] (0,0) -- (-1,1);
            \draw[postaction={decorate}] (1,-1) -- (0,0);
        \end{tikzpicture}
        &
        \begin{tikzpicture}[scale=0.55,
        very thick,decoration={
        markings,
        mark=at position 0.4 with {\arrowreversed[scale=0.8]{>}}}
        ]
            \draw[postaction={decorate}] (-1,0) -- (0,0);
            \draw[postaction={decorate}] (1,0) -- (0,0);
            \draw[postaction={decorate}] (0,1) -- (0,0);
            \draw[postaction={decorate}] (0,0) -- (0,-1);
            \draw[postaction={decorate}] (0,0) -- (-1,1);
            \draw[postaction={decorate}] (0,0) -- (1,-1);
        \end{tikzpicture}
        &
        \begin{tikzpicture}[scale=0.55,
        very thick,decoration={
        markings,
        mark=at position 0.4 with {\arrowreversed[scale=0.8]{>}}}
        ]
            \draw[postaction={decorate}] (-1,0) -- (0,0);
            \draw[postaction={decorate}] (0,0) -- (1,0);
            \draw[postaction={decorate}] (0,0) -- (0,1);
            \draw[postaction={decorate}] (0,-1) -- (0,0);
            \draw[postaction={decorate}] (0,0) -- (-1,1);
            \draw[postaction={decorate}] (1,-1) -- (0,0);
        \end{tikzpicture}
        &
        \begin{tikzpicture}[scale=0.55,
        very thick,decoration={
        markings,
        mark=at position 0.4 with {\arrowreversed[scale=0.8]{>}}}
        ]
            \draw[postaction={decorate}] (-1,0) -- (0,0);
            \draw[postaction={decorate}] (1,0) -- (0,0);
            \draw[postaction={decorate}] (0,0) -- (0,1);
            \draw[postaction={decorate}] (0,-1) -- (0,0);
            \draw[postaction={decorate}] (0,0) -- (-1,1);
            \draw[postaction={decorate}] (0,0) -- (1,-1);
        \end{tikzpicture}
        &
        \begin{tikzpicture}[scale=0.55,
        very thick,decoration={
        markings,
        mark=at position 0.4 with {\arrowreversed[scale=0.8]{>}}}
        ]
            \draw[postaction={decorate}] (-1,0) -- (0,0);
            \draw[postaction={decorate}] (1,0) -- (0,0);
            \draw[postaction={decorate}] (0,0) -- (0,1);
            \draw[postaction={decorate}] (0,0) -- (0,-1);
            \draw[postaction={decorate}] (0,0) -- (-1,1);
            \draw[postaction={decorate}] (1,-1) -- (0,0);
        \end{tikzpicture}
        \\
        \begin{tikzpicture}[scale=0.55]
            \draw (-1,0) -- (0,0);
            \draw (0,0) -- (1,0);
            \draw (0,1) -- (0,0);
            \draw (0,0) -- (0,-1);
            \draw (-1,1) -- (1,-1);
        \end{tikzpicture}
        &
        \begin{tikzpicture}[scale=0.55]
            \draw (-1,0) -- (0,0);
            \draw (0,0) -- (1,0);
            \draw[color=wongblue, ultra thick] (0,1) -- (0,0);
            \draw[color=wongblue, ultra thick] (0,0) -- (0,-1);
            \draw (-1,1) -- (1,-1);
        \end{tikzpicture}
        &
        \begin{tikzpicture}[scale=0.55]
            \draw (-1,0) -- (0,0);
            \draw (0,0) -- (1,0);
            \draw[color=wongblue, ultra thick] (0,1) -- (0,0);
            \draw (0,0) -- (0,-1);
            \draw (-1,1) -- (0,0);
            \draw[color=wongblue, ultra thick] (0,0) -- (1,-1);
        \end{tikzpicture}
        &
        \begin{tikzpicture}[scale=0.55]
            \draw (-1,0) -- (0,0);
            \draw[color=wongblue, ultra thick] (0,0) -- (1,0);
            \draw[color=wongblue, ultra thick] (0,1) -- (0,0);
            \draw (0,0) -- (0,-1);
            \draw (-1,1) -- (0,0);
            \draw (0,0) -- (1,-1);
        \end{tikzpicture}
        &
        \begin{tikzpicture}[scale=0.55]
            \draw (-1,0) -- (0,0);
            \draw (0,0) -- (1,0);
            \draw (0,1) -- (0,0);
            \draw[color=wongblue, ultra thick] (0,0) -- (0,-1);
            \draw[color=wongblue, ultra thick] (-1,1) -- (0,0);
            \draw (0,0) -- (1,-1);
        \end{tikzpicture}
        &
        \begin{tikzpicture}[scale=0.55]
            \draw (-1,0) -- (0,0);
            \draw (0,0) -- (1,0);
            \draw (0,1) -- (0,0);
            \draw (0,0) -- (0,-1);
            \draw[color=wongblue, ultra thick] (-1,1) -- (0,0);
            \draw[color=wongblue, ultra thick] (0,0) -- (1,-1);
        \end{tikzpicture}
        &
        \begin{tikzpicture}[scale=0.55]
            \draw (-1,0) -- (0,0);
            \draw[color=wongblue, ultra thick] (0,0) -- (1,0);
            \draw (0,1) -- (0,0);
            \draw (0,0) -- (0,-1);
            \draw[color=wongblue, ultra thick] (-1,1) -- (0,0);
            \draw (0,0) -- (1,-1);
        \end{tikzpicture}
        &
        \begin{tikzpicture}[scale=0.55]
            \draw (-1,0) -- (0,0);
            \draw (0,0) -- (1,0);
            \draw[color=wongblue, ultra thick] (0,1) -- (0,0);
            \draw[color=wongblue, ultra thick] (0,0) -- (0,-1);
            \draw[color=wongblue, ultra thick] (-1,1) -- (0,0);
            \draw[color=wongblue, ultra thick] (0,0) -- (1,-1);
        \end{tikzpicture}
        &
        \begin{tikzpicture}[scale=0.55]
            \draw (-1,0) -- (0,0);
            \draw[color=wongblue, ultra thick] (0,0) -- (1,0);
            \draw[color=wongblue, ultra thick] (0,1) -- (0,0);
            \draw[color=wongblue, ultra thick] (0,0) -- (0,-1);
            \draw[color=wongblue, ultra thick] (-1,1) -- (0,0);
            \draw (0,0) -- (1,-1);
        \end{tikzpicture}
        &
        \begin{tikzpicture}[scale=0.55]
            \draw (-1,0) -- (0,0);
            \draw[color=wongblue, ultra thick] (0,0) -- (1,0);
            \draw[color=wongblue, ultra thick] (0,1) -- (0,0);
            \draw (0,0) -- (0,-1);
            \draw[color=wongblue, ultra thick] (-1,1) -- (0,0);
            \draw[color=wongblue, ultra thick] (0,0) -- (1,-1);
        \end{tikzpicture}
    \end{tabular}
    \caption{The identification of $20$V configurations and families of osculating Schr\"{o}der paths.} \label{tab:conf-path-identification-20V}
\end{table}

\subsection{Mixed six-vertex configurations}
Here we define mixed $6$V configurations.
The reader may find it helpful to compare the definition with the example shown in \cref{fig:example-m6V-config}.

\begin{defi}
    Consider the square lattice~$\Z^2$ consisting of the standard horizontal and vertical edges.

    Assume $\mathbf{k} = (k_1, k_2, \ldots, k_n)$ is a strictly increasing sequence of positive integers.
    Define the rectangular domain $\mathcal{M}_{\mathbf{k}}$ to be the domain in the square lattice whose lattice points are in the closed quadrangle bounded by the lines
    \begin{equation*}
        j = 2k_n-1 \quad (\text{north}), \qquad j = 1 \quad (\text{south}), \qquad i = n \quad (\text{east}), \qquad i = 1 \quad (\text{west}).
    \end{equation*}
    Its internal edges are the edges of the square lattice whose endpoints both lie in $\mathcal{M}_{\mathbf{k}}$, and its boundary edges are those with exactly one endpoint in $\mathcal{M}_{\mathbf{k}}$.
    For a boundary edge, incoming and outgoing are understood with respect to its endpoint in $\mathcal{M}_{\mathbf{k}}$; the other endpoint is not regarded as a vertex of the domain.

    The boundary edges are oriented as follows:
    \begin{itemize}
        \item North boundary: All boundary edges are outgoing;
        \item South boundary: All boundary edges are outgoing;
        \item East boundary: All boundary edges are incoming;
        \item West boundary: The horizontal boundary edges at positions~$2k_1-1, \ldots, 2k_n-1$, counted from the bottom, are incoming, while all other horizontal boundary edges are outgoing. (Note that incoming boundary edges can only occur at odd positions.)
    \end{itemize}

    A \myemph{mixed $6$V configuration} of $\mathcal{M}_{\mathbf{k}}$ is an orientation of the internal edges which, together with the fixed orientations of the boundary edges, satisfies the \myemph{ice rule} at every lattice point of $\mathcal{M}_{\mathbf{k}}$, that is, each such point is incident to exactly two incoming and two outgoing edges.
\end{defi}

\begin{figure}[hbt]
    \centering
    \includegraphics[scale=0.6]{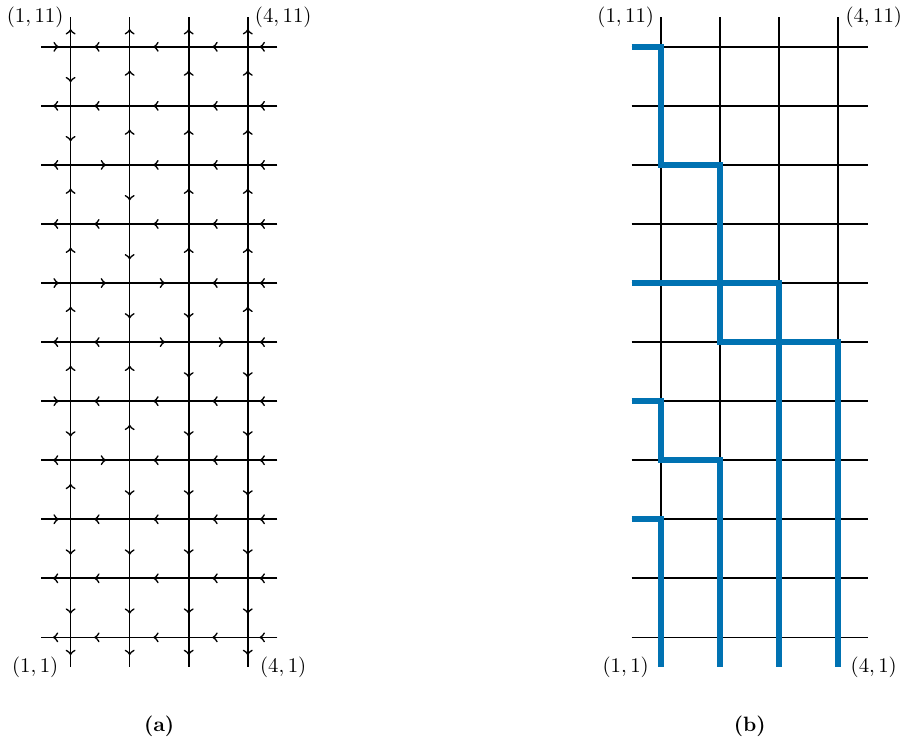}
    \caption{(a) A mixed $6$V configuration on the domain $\mathcal{M}_{(2, 3, 4, 6)}$. (b) The path representation of (a).}
    \label{fig:example-m6V-config}
\end{figure}

In the special case $\mathbf{k} = (1, 2, \ldots, n)$, the domain~$\mathcal{M}_{(1, 2, \ldots, n)}$ coincides with the rectangular domain studied in \cite{MR4395233}.

Every mixed $6$V configuration can be identified with a family of $n$ osculating lattice paths, where the $\ell$-th path starts at $(1,2k_\ell-1)$ and ends at $(\ell,1)$ for $\ell = 1,2,\ldots,n$.
An edge of the domain, including a boundary edge, is used by the path family if and only if it is oriented rightward or downward.
See \cref{tab:conf-path-identification} for the correspondence between local edge orientations and path segments around a vertex.
\begin{table}[htb]
    \centering
    \begin{tabular}{c|c|c|c|c|c}
        \begin{tikzpicture}[scale=0.7,very thick,decoration={
        markings,
        mark=at position 0.6 with {\arrow{>}}}
        ]
            \draw[postaction={decorate}] (-1,0) -- (0,0);
            \draw[postaction={decorate}] (0,0) -- (1,0);
            \draw[postaction={decorate}] (0,1) -- (0,0);
            \draw[postaction={decorate}] (0,0) -- (0,-1);
        \end{tikzpicture}
        &
        \begin{tikzpicture}[scale=0.7,shift={(2.5,0)},
        very thick,decoration={
        markings,
        mark=at position 0.6 with {\arrow{>}}}
        ]
            \draw[postaction={decorate}] (0,0) -- (-1,0);
            \draw[postaction={decorate}] (1,0) -- (0,0);
            \draw[postaction={decorate}] (0,0) -- (0,1);
            \draw[postaction={decorate}] (0,-1) -- (0,0);
        \end{tikzpicture}
        &
        \begin{tikzpicture}[scale=0.7,shift={(5,0)},
        very thick,decoration={
        markings,
        mark=at position 0.6 with {\arrow{>}}}
        ]
            \draw[postaction={decorate}] (-1,0) -- (0,0);
            \draw[postaction={decorate}] (0,0) -- (1,0);
            \draw[postaction={decorate}] (0,0) -- (0,1);
            \draw[postaction={decorate}] (0,-1) -- (0,0);
        \end{tikzpicture}
        &
        \begin{tikzpicture}[scale=0.7,shift={(7.5,0)},
        very thick,decoration={
        markings,
        mark=at position 0.6 with {\arrow{>}}}
        ]
            \draw[postaction={decorate}] (0,0) -- (-1,0);
            \draw[postaction={decorate}] (1,0) -- (0,0);
            \draw[postaction={decorate}] (0,1) -- (0,0);
            \draw[postaction={decorate}] (0,0) -- (0,-1);
        \end{tikzpicture}
        &
        \begin{tikzpicture}[scale=0.7,shift={(10,0)},
        very thick,decoration={
        markings,
        mark=at position 0.6 with {\arrow{>}}}
        ]
            \draw[postaction={decorate}] (-1,0) -- (0,0);
            \draw[postaction={decorate}] (1,0) -- (0,0);
            \draw[postaction={decorate}] (0,0) -- (0,1);
            \draw[postaction={decorate}] (0,0) -- (0,-1);
        \end{tikzpicture}
        &
        \begin{tikzpicture}[scale=0.7,shift={(12.5,0)},
        very thick,decoration={
        markings,
        mark=at position 0.6 with {\arrow{>}}}
        ]
            \draw[postaction={decorate}] (0,0) -- (-1,0);
            \draw[postaction={decorate}] (0,0) -- (1,0);
            \draw[postaction={decorate}] (0,1) -- (0,0);
            \draw[postaction={decorate}] (0,-1) -- (0,0);
        \end{tikzpicture}
        \\
        \\
        \begin{tikzpicture}
            \draw[color=wongblue, ultra thick] (-1,0) -- (0,0);
            \draw[color=wongblue, ultra thick] (0,0) -- (1,0);
            \draw[color=wongblue, ultra thick] (0,1) -- (0,0);
            \draw[color=wongblue, ultra thick] (0,0) -- (0,-1);
        \end{tikzpicture}
        &
        \begin{tikzpicture}
            \draw (-1,0) -- (0,0);
            \draw (0,0) -- (1,0);
            \draw (0,1) -- (0,0);
            \draw (0,0) -- (0,-1);
        \end{tikzpicture}
        &
        \begin{tikzpicture}
            \draw[color=wongblue, ultra thick] (-1,0) -- (0,0);
            \draw[color=wongblue, ultra thick] (0,0) -- (1,0);
            \draw (0,1) -- (0,0);
            \draw (0,0) -- (0,-1);
        \end{tikzpicture}
        &
        \begin{tikzpicture}
            \draw (-1,0) -- (0,0);
            \draw (0,0) -- (1,0);
            \draw[color=wongblue, ultra thick] (0,1) -- (0,0);
            \draw[color=wongblue, ultra thick] (0,0) -- (0,-1);
        \end{tikzpicture}
        &
        \begin{tikzpicture}
            \draw[color=wongblue, ultra thick] (-1,0) -- (0,0);
            \draw (0,0) -- (1,0);
            \draw (0,1) -- (0,0);
            \draw[color=wongblue, ultra thick] (0,0) -- (0,-1);
        \end{tikzpicture}
        &
        \begin{tikzpicture}
            \draw (-1,0) -- (0,0);
            \draw[color=wongblue, ultra thick] (0,0) -- (1,0);
            \draw[color=wongblue, ultra thick] (0,1) -- (0,0);
            \draw (0,0) -- (0,-1);
        \end{tikzpicture}
    \end{tabular}
    \caption{The identification of mixed $6$V configurations and families of osculating lattice paths.} \label{tab:conf-path-identification}
\end{table}
Similar to $20$V configurations, throughout this paper, we represent mixed $6$V configurations interchangeably as edge orientations and as families of osculating lattice paths.

\section{Probabilistic bijections and Yang--Baxter moves} \label{sec:local-prob-map}
In this section, we first define probabilistic bijections (\cref{defi:probabilistic-bijections}).
We then interpret Yang--Baxter moves as probabilistic bijections (\cref{prop:Yang--Baxter-moves-as-probabilistic-bijections}).
By composing these probabilistic bijections, we construct a probabilistic bijection between $20$V configurations and mixed $6$V configurations (\cref{prop:twenty-mixed6V-prob-bij}).

\subsection{Probabilistic bijections} \label{subsec:prob-bij}
The following definition comes from \cite{MR4031106}, where the authors use the term ``bijectivization'' instead of ``probabilistic bijection''.
\begin{defi} \label{defi:probabilistic-bijections}
    Let $X$ and $Y$ be two finite sets with weight functions $\omega_X \colon X \to A$ and $\omega_Y \colon Y \to A$, where $A$ is a possibly noncommutative unital algebra.
    A \myemph{probabilistic bijection} from $(X, \omega_X)$ to $(Y, \omega_Y)$ is a pair of maps~$\mathcal{P}, \overline{\mathcal{P}} \colon X \times Y \to A$ such that
    \begin{enumerate}
        \item for each $x \in X$, we have $\sum_{y \in Y} \mathcal{P}(x, y) = 1$;
        \item for each $y \in Y$, we have $\sum_{x \in X} \overline{\mathcal{P}}(x, y) = 1$;
        \item for each $x \in X$ and $y \in Y$, we have $\omega_X(x) \mathcal{P}(x, y) = \overline{\mathcal{P}}(x, y) \omega_Y(y)$.
    \end{enumerate}
\end{defi}
Probabilistic bijections generalize bijections.
To see that, now suppose that $f\colon X \to Y$ is a bijection.
Define the weight functions~$\omega_X\colon X \to \R$ and $\omega_Y\colon Y \to \R$ by $\omega_X \equiv 1$ and $\omega_Y \equiv 1$.
Then, the maps~$\mathcal{P}, \overline{\mathcal{P}}\colon  X \times Y \to \R$ given by $\mathcal{P}(x, y)=\overline{\mathcal{P}}(x,y)=[y = f(x)]$ form a probabilistic bijection between $X$ and $Y$.
(This is a special case of \cref{lemm:proper-weight-distribution-is-prob-bij} given below.)

\begin{rema}
    Let $(\mathcal{P}, \overline{\mathcal{P}})$ be a probabilistic bijection from $(X,\omega_X)$ to $(Y,\omega_Y)$.
    Suppose that $\omega_X(x) \mathcal{P}(x, y)= \overline{\mathcal{P}}(x, y) \omega_Y(y) \in \R_{\geq 0}$ for all $(x, y) \in X \times Y$, and that $\sum_{x \in X} \omega_X(x) = \sum_{y \in Y} \omega_Y(y) = 1$.
    The latter condition can always be achieved, provided the total weights are nonzero, by normalizing each weight function by its total weight.
    Then, $\mathbb{P}(x, y) \coloneqq \omega_X(x) \mathcal{P}(x, y) = \overline{\mathcal{P}}(x, y) \omega_Y(y)$ defines a joint distribution on $X \times Y$.
    In this case, $\omega_X$ and $\omega_Y$ are the marginal distributions of $X$ and $Y$, respectively, while $\mathcal{P}$ and $\overline{\mathcal{P}}$ are the corresponding conditional distributions.
    See also \cite[2.2]{MR4031106} and \cite[Remark 4.1.4]{MR4311960} for probabilistic interpretations of probabilistic bijections.
\end{rema}

An important property of probabilistic bijections is that they preserve the weighted enumerations as follows.
\begin{lemm} \label{lemm:weight-preservation}
    Let $X$ and $Y$ be two finite sets with weight functions $\omega_X \colon X \to A, \omega_Y \colon Y \to A$, and assume that there exists a probabilistic bijection $(\mathcal{P}, \overline{\mathcal{P}})$ between $X$ and $Y$.
    Then, we have
    \begin{equation*}
         \sum_{x \in X} \omega_X(x) = \sum_{y \in Y} \omega_Y(y).
    \end{equation*}
\end{lemm}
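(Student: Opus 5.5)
The identity will follow from a double-counting argument on the quantity $\sum_{x\in X}\sum_{y\in Y}\omega_X(x)\mathcal{P}(x,y)$, evaluated once by summing over $y$ first and once by summing over $x$ first. The only care needed is to keep the order of factors fixed, since the algebra $A$ may be noncommutative.

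First, I will use condition (1) of \cref{defi:probabilistic-bijections}. For each $x$, sum over $y$ with $\omega_X(x)$ kept as a left factor:
\[
\sum_{y\in Y}\omega_X(x)\mathcal{P}(x,y)=\omega_X(x)\sum_{y\in Y}\mathcal{P}(x,y)=\omega_X(x)\cdot 1=\omega_X(x).
\]
This uses only left distributivity and the unit of $A$. Summing over $x$ then gives that the double sum equals $\sum_{x}\omega_X(x)$.

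Second, I will rewrite each term using condition (3) as $\overline{\mathcal{P}}(x,y)\omega_Y(y)$. Because $X\times Y$ is finite, I can interchange the order of summation. For fixed $y$, I factor $\omega_Y(y)$ out on the right:
\[
\sum_{x\in X}\overline{\mathcal{P}}(x,y)\omega_Y(y)=\Bigl(\sum_{x\in X}\overline{\mathcal{P}}(x,y)\Bigr)\omega_Y(y)=\omega_Y(y),
\]
which uses condition (2) and right distributivity. Summing over $y$ shows that the double sum also equals $\sum_y\omega_Y(y)$, and comparing the two evaluations proves the claim.

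There is no real obstacle here. The one point to check is that $\omega_X$ always appears on the left of $\mathcal{P}$ and $\omega_Y$ always appears on the right of $\overline{\mathcal{P}}$. This is exactly how condition (3) is arranged, so each factoring step is legitimate without commutativity.
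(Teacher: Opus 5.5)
Your proposal is correct and follows exactly the paper's proof: expand $\sum_{x}\omega_X(x)$ using condition (1), rewrite each term with condition (3), swap the finite sums, and collapse with condition (2). Your attention to keeping $\omega_X(x)$ on the left and $\omega_Y(y)$ on the right matches the paper's remark on why condition (3) is ordered that way.
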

\begin{proof}
    We use the first, third, and second conditions of \cref{defi:probabilistic-bijections} successively:
    \begin{equation*}
        \sum_{x \in X} \omega_X(x) = \sum_{x \in X} \sum_{y \in Y} \omega_X(x) \mathcal{P}(x, y)
        = \sum_{y \in Y} \sum_{x \in X} \overline{\mathcal{P}}(x, y) \omega_Y(y)
        = \sum_{y \in Y} \omega_Y(y).
    \end{equation*}
\end{proof}

In \cref{sec:map-mixed-6V-to-amtri}, we will use the next lemma to show that a surjective map from mixed $6$V configurations to triple-free GT patterns gives a probabilistic bijection between them.
\begin{lemm} \label{lemm:proper-weight-distribution-is-prob-bij}
    Let $X, Y$ be two finite sets with weight functions $\omega_X \colon X \to A, \omega_Y \colon Y \to A \setminus \{0\}$, where $A$ is a field.
    Assume that there exists a map~$f \colon X \to Y$ such that
    \begin{equation*}
        \sum_{x \in f^{-1}(y)} \omega_X(x) = \omega_Y(y)
    \end{equation*}
    for each $y \in Y$.
    Then, the pair of maps~$\mathcal{P}, \overline{\mathcal{P}} \colon X \times Y \to A$ defined by
    \begin{equation*}
        \mathcal{P}(x, y) = [f(x) = y] \quad \text{and} \quad \overline{\mathcal{P}}(x, y) = \frac{\omega_X(x)}{\omega_Y(y)} [f(x) = y],
    \end{equation*}
    where $[P]$ is the Iverson bracket that equals $1$ if the proposition $P$ is true and $0$ otherwise, forms a probabilistic bijection from $(X, \omega_X)$ to $(Y, \omega_Y)$.
\end{lemm}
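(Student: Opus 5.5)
The plan is to verify the three conditions of \cref{defi:probabilistic-bijections} directly for the pair $(\mathcal{P}, \overline{\mathcal{P}})$. Since $A$ is a field and $\omega_Y$ takes values in $A \setminus \{0\}$, the quotient $\omega_X(x)/\omega_Y(y)$ makes sense, so $\overline{\mathcal{P}}$ is a well-defined map $X \times Y \to A$. The set $Y$ may contain elements outside the image of $f$; I will check below that these cause no problem.

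For condition (1), I fix $x \in X$. Exactly one $y \in Y$ satisfies $f(x) = y$, namely $y = f(x)$. Hence
\[
\sum_{y \in Y} \mathcal{P}(x,y) = \sum_{y \in Y} [f(x)=y] = 1 .
\]

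For condition (2), I fix $y \in Y$ and use the hypothesis:
\[
\sum_{x \in X} \overline{\mathcal{P}}(x,y) = \frac{1}{\omega_Y(y)} \sum_{x \in f^{-1}(y)} \omega_X(x) = \frac{\omega_Y(y)}{\omega_Y(y)} = 1 .
\]
This step holds for every $y \in Y$. In particular, if $f^{-1}(y)$ were empty, the hypothesis would force $\omega_Y(y)=0$, which is excluded. So the assumption $\omega_Y(y) \neq 0$ already implies that $f$ is surjective, and nothing further needs to be treated.

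For condition (3), both sides vanish when $f(x) \neq y$. When $f(x) = y$, the left-hand side is $\omega_X(x)$. The right-hand side is $\frac{\omega_X(x)}{\omega_Y(y)}\,\omega_Y(y) = \omega_X(x)$, and this holds because $A$ is commutative.

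No step is a genuine obstacle. The only points that need care are the role of $\omega_Y \neq 0$ in condition (2) and the use of commutativity in condition (3); the field hypothesis provides both.
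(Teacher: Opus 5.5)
Your proposal is correct and follows the paper's proof: it checks the three conditions of \cref{defi:probabilistic-bijections} directly, using the uniqueness of $y=f(x)$ for (1), the fiber-sum hypothesis for (2), and cancellation of $\omega_Y(y)$ for (3). Your remark that $\omega_Y(y)\neq 0$ forces $f$ to be surjective is correct but not needed. Commutativity is also not actually needed in (3), since $\omega_X(x)\,\omega_Y(y)^{-1}\,\omega_Y(y)=\omega_X(x)$ already holds in the order $\overline{\mathcal{P}}(x,y)\,\omega_Y(y)$ that the definition requires.
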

\begin{proof}
    We check the three conditions in the definition of probabilistic bijections.
    The first condition clearly holds since for each $x \in X$, there is only one $y \in Y$ such that $f(x) = y$.
    The second condition holds since for each $y \in Y$,
    \begin{equation*}
        \sum_{x \in X} \overline{\mathcal{P}}(x, y) = \sum_{x \in f^{-1}(y)} \frac{\omega_X(x)}{\omega_Y(y)} = \frac{1}{\omega_Y(y)} \times \sum_{x \in f^{-1}(y)} \omega_X(x) = 1.
    \end{equation*}
    The third condition holds since for each $x \in X$ and $y \in Y$, we have
    \begin{equation*}
        \omega_X(x) \mathcal{P}(x, y) = \omega_X(x) [f(x) = y] = \omega_Y(y) \cdot \frac{\omega_X(x)}{\omega_Y(y)} [f(x) = y] = \omega_Y(y) \overline{\mathcal{P}}(x, y).
    \end{equation*}
\end{proof}

The following lemma will be used to compose multiple probabilistic bijections.
\begin{lemm} \label{lemma:probabilistic-bijection-composition}
    Let $(\mathcal{P}, \overline{\mathcal{P}})$ be a probabilistic bijection from $(X, \omega_X)$ to $(Y, \omega_Y)$, and let $(\mathcal{Q}, \bar{\mathcal{Q}})$ be a probabilistic bijection from $(Y, \omega_Y)$ to $(Z, \omega_Z)$.
    Then their composition~$(\mathcal{R}, \bar{\mathcal{R}})$ defined by
    \begin{equation}
        \mathcal{R}(x, z) = \sum_{y \in Y} \mathcal{P}(x, y) \mathcal{Q}(y, z) \quad\text{and}\quad \bar{\mathcal{R}}(x, z) = \sum_{y \in Y} \bar{\mathcal{Q}}(y, z) \overline{\mathcal{P}}(x, y),
    \end{equation}
    is a probabilistic bijection from $(X, \omega_X)$ to $(Z, \omega_Z)$.
\end{lemm}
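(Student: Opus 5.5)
We verify the three conditions of \cref{defi:probabilistic-bijections} for $(\mathcal{R}, \bar{\mathcal{R}})$ directly. The only point needing care is that $A$ may be noncommutative, so the order of factors in each product must be kept exactly as written. All sums are finite, so we may interchange them freely.

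For the first condition, fix $x \in X$. Summing over $z$ and interchanging the finite sums gives
\[
\sum_{z} \mathcal{R}(x,z) = \sum_{y} \mathcal{P}(x,y) \Bigl(\sum_{z} \mathcal{Q}(y,z)\Bigr).
\]
The inner sum equals $1$ by the first condition for $(\mathcal{Q}, \bar{\mathcal{Q}})$. The remaining sum $\sum_y \mathcal{P}(x,y)$ then equals $1$ by the first condition for $(\mathcal{P}, \overline{\mathcal{P}})$.

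For the second condition, fix $z \in Z$. Summing over $x$ gives
\[
\sum_{x} \bar{\mathcal{R}}(x,z) = \sum_{y} \bar{\mathcal{Q}}(y,z) \Bigl(\sum_{x} \overline{\mathcal{P}}(x,y)\Bigr).
\]
Here we use that $\overline{\mathcal{P}}$ stands on the right of $\bar{\mathcal{Q}}$, so the sum over $x$ factors out on the right. The inner sum is $1$ by the second condition for $(\mathcal{P}, \overline{\mathcal{P}})$. The remaining sum $\sum_y \bar{\mathcal{Q}}(y,z)$ is $1$ by the second condition for $(\mathcal{Q}, \bar{\mathcal{Q}})$.

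For the third condition, we compute $\omega_X(x)\mathcal{R}(x,z) = \sum_y \omega_X(x)\mathcal{P}(x,y)\mathcal{Q}(y,z)$ and move the weight rightward in two steps:
\begin{align*}
\omega_X(x)\mathcal{R}(x,z)
&= \sum_y \overline{\mathcal{P}}(x,y)\,\omega_Y(y)\,\mathcal{Q}(y,z) && \text{(third condition for } (\mathcal{P},\overline{\mathcal{P}})\text{)}\\
&= \sum_y \overline{\mathcal{P}}(x,y)\,\bar{\mathcal{Q}}(y,z)\,\omega_Z(z) && \text{(third condition for } (\mathcal{Q},\bar{\mathcal{Q}})\text{)}.
\end{align*}
To conclude, this must equal $\bar{\mathcal{R}}(x,z)\,\omega_Z(z) = \sum_y \bar{\mathcal{Q}}(y,z)\,\overline{\mathcal{P}}(x,y)\,\omega_Z(z)$. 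That requires $\overline{\mathcal{P}}(x,y)$ and $\bar{\mathcal{Q}}(y,z)$ to commute, which is the main obstacle in the noncommutative setting.

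We therefore handle this step in one of two ways.
\begin{itemize}
\item If $A$ is commutative, which covers all applications in this paper since the weights are real numbers, the two factors commute and the identity follows immediately.
\item If $A$ is genuinely noncommutative, we note that the natural composition is $\bar{\mathcal{R}}(x,z) = \sum_y \overline{\mathcal{P}}(x,y)\bar{\mathcal{Q}}(y,z)$, which is the order the computation above produces. With that order the third condition holds verbatim. The second condition also still holds: we sum first over $x$, using the second condition for $(\mathcal{P}, \overline{\mathcal{P}})$, which factors out on the left, and then over $y$.
\end{itemize}

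So the proof is these three routine interchanges of finite sums. The only care needed is the order of factors in the third condition, resolved either by commutativity of $A$ or by reading the product in $\bar{\mathcal{R}}$ in the order $\overline{\mathcal{P}}\,\bar{\mathcal{Q}}$.
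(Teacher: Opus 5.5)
Your proof is correct and is essentially the paper's argument: the paper checks only the third condition, via exactly your chain $\omega_X(x)\mathcal{P}(x,y)\mathcal{Q}(y,z) = \overline{\mathcal{P}}(x,y)\omega_Y(y)\mathcal{Q}(y,z) = \overline{\mathcal{P}}(x,y)\bar{\mathcal{Q}}(y,z)\omega_Z(z)$. Its final step, identifying $\sum_y \overline{\mathcal{P}}(x,y)\bar{\mathcal{Q}}(y,z)\omega_Z(z)$ with $\bar{\mathcal{R}}(x,z)\omega_Z(z)$, tacitly uses the order $\overline{\mathcal{P}}\,\bar{\mathcal{Q}}$. So your diagnosis of the factor-order mismatch with the stated definition of $\bar{\mathcal{R}}$ is right: the order $\bar{\mathcal{Q}}\,\overline{\mathcal{P}}$ genuinely fails for suitable $2\times 2$ matrix-valued weights, and your reading $\bar{\mathcal{R}}(x,z)=\sum_y \overline{\mathcal{P}}(x,y)\bar{\mathcal{Q}}(y,z)$ is the correct repair. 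One small correction: the paper's weights are complex rather than real (they involve $q=e^{\pi i/8}$), but $\CC$ is still commutative, so your first case covers every application in the paper.
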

\begin{proof}
    The lemma is straightforward to prove. We only verify the third condition in \cref{defi:probabilistic-bijections}:
    \[
    \begin{aligned}
    \omega_X(x)\mathcal{R}(x,z)
    &= \sum_{y\in Y}\omega_X(x)\mathcal{P}(x,y)\mathcal{Q}(y,z)
    = \sum_{y\in Y}\overline{\mathcal{P}}(x,y)\omega_Y(y)\mathcal{Q}(y,z) \\
    &= \sum_{y\in Y}\overline{\mathcal{P}}(x,y)\overline{\mathcal{Q}}(y,z)\omega_Z(z)
    = \overline{\mathcal{R}}(x,z)\omega_Z(z).
    \end{aligned}
    \]
\end{proof}
This proof explains the order of the factors in the third condition of \cref{defi:probabilistic-bijections}: when $A$ is noncommutative, the argument requires $\omega_X(x)\mathcal{P}(x,y)=\overline{\mathcal{P}}(x,y)\omega_Y(y)$, rather than $\omega_X(x)\mathcal{P}(x,y)=\omega_Y(y)\overline{\mathcal{P}}(x,y)$.

\subsection{Yang--Baxter moves as probabilistic bijections} \label{subsec:prob-bij-Yang--Baxter}
In this subsection, we show that each Yang--Baxter move induces a probabilistic bijection between the sets of admissible configurations before and after the move.

First, we review the Yang--Baxter equation \cite{MR998375} in a form suitable for our purposes.
Namely, we focus on graphs $G$ that can be obtained from a domain~$\mathcal{Q}_\mathbf{k}$ by repeatedly applying the following local transformations of graphs:
\begin{description}
    \centering
    \item[Local bend moves] \(\quad
    \tikz[scale=0.5,baseline=-0.5ex]{
        \draw (-1, 0) -- (0,0);
        \draw (0, 0) -- (1,0);
        \draw (0, -1) -- (0,0);
        \draw (0, 0) -- (0,1);
        \draw (-1, 1) -- (0,0);
        \draw (0, 0) -- (1,-1);
    } \mathrel{\longrightarrow} \tikz[scale=0.5,baseline=-0.5ex]{
        \draw (-1, 0) -- (1,0);
        \draw (0, -1) -- (0,1);
        \draw[rounded corners] (-1, 1) -- (-0.5,0.5) -- (-0.5,-0.5) -- (0.5,-0.5) -- (1,-1);
    } \qquad\qquad \tikz[scale=0.5,baseline=-0.5ex]{
        \draw (-1, 0) -- (0,0);
        \draw (0, 0) -- (1,0);
        \draw (0, -1) -- (0,0);
        \draw (0, 0) -- (0,1);
        \draw (-1, 1) -- (0,0);
        \draw (0, 0) -- (1,-1);
    }  \mathrel{\longrightarrow} \tikz[scale=0.5,baseline=-0.5ex]{
        \draw (-1, 0) -- (1, 0);
        \draw (0, -1) -- (0, 1);
        \draw[rounded corners] (-1, 1) -- (-0.5,0.5) -- (0.5,0.5) -- (0.5,-0.5) -- (1,-1);
    }
    \)
    \item[Local flip moves] \(\quad
    \tikz[scale=0.5,baseline=-0.5ex]{
        \draw (-1, 0) -- (1, 0);
        \draw (0, -1) -- (0, 1);
        \draw[rounded corners] (-1, 1) -- (-0.5,0.5) -- (0.5,0.5) -- (0.5,-0.5) -- (1,-1);
    }  \mathrel{\longrightarrow} \tikz[scale=0.5,baseline=-0.5ex]{
        \draw (-1, 0) -- (1,0);
        \draw (0, -1) -- (0,1);
        \draw[rounded corners] (-1, 1) -- (-0.5,0.5) -- (-0.5,-0.5) -- (0.5,-0.5) -- (1,-1);
    } \qquad\qquad \tikz[scale=0.5,baseline=-0.5ex]{
        \draw (-1, 0) -- (1,0);
        \draw (0, -1) -- (0,1);
        \draw[rounded corners] (-1, 1) -- (-0.5,0.5) -- (-0.5,-0.5) -- (0.5,-0.5) -- (1,-1);
    }  \mathrel{\longrightarrow} \tikz[scale=0.5,baseline=-0.5ex]{
        \draw (-1, 0) -- (1, 0);
        \draw (0, -1) -- (0, 1);
        \draw[rounded corners] (-1, 1) -- (-0.5,0.5) -- (0.5,0.5) -- (0.5,-0.5) -- (1,-1);
    }.
    \)
\end{description}
In this paper, we refer to these transformations as \myemph{Yang--Baxter moves}.
Note that any graph $G$ obtained from $\mathcal{Q}_\mathbf{k}$ by such moves has the property that every vertex has degree four or six.
Moreover, we focus on a specific choice of vertex weights, namely those listed in \cref{tab:local-weights}, which will be explained shortly.
For further details on the weighting of $20$V configurations, we refer the reader to \cite{MR4395233,MR4245068}, where more general weights of $20$V configurations are used.

Just as boundary conditions are specified for the domain~$\mathcal{Q}_\mathbf{k}$, in general, our graph $G$ may contain some edges with prescribed orientations.
We call an orientation $x$ of the edges whose direction is not prescribed an \myemph{admissible configuration} of $G$ if the ice rule holds at every internal vertex; that is, the number of incoming edges equals the number of outgoing edges.

Each admissible configuration $x$ is assigned the \myemph{weight} $\omega_G(x)$, defined multiplicatively as the product of the \myemph{vertex weights} $\omega_v(x)$, that is,
\begin{equation*}
    \omega_{G}(x) = \prod_{v} \omega_v(x).
\end{equation*}
Here, the product is over all vertices of $G$; as in the definition of $\mathcal{Q}_{\mathbf{k}}$, the outer endpoints of the boundary edges are not regarded as vertices of $G$.

The vertex weight $\omega_v(x)$ is computed as follows.
If the degree of the vertex is four, then the weight is given according to \cref{tab:local-weights}.
If the degree of the vertex is six, the weight is defined to be the sum of the weights of all admissible configurations on the local graph obtained by applying a local bend Yang--Baxter move to the degree-$6$ vertex, as explained below.

For example, consider the following local configuration of a vertex~$v$ of degree $6$:
\tikz[scale=0.5]{
    \draw[color=wongblue,ultra thick] (-1, 0) -- (0,0);
    \draw (0, 0) -- (1,0);
    \draw[color=wongblue,ultra thick] (0, -1) -- (0,0);
    \draw (0, 0) -- (0,1);
    \draw (-1, 1) -- (0,0);
    \draw (0, 0) -- (1,-1);
}.
If the diagonal line is bent to the northeast, there is only one admissible configuration: \tikz[scale=0.5]{
    \draw (-1, 0) -- (0,0);
    \draw (0, 0) -- (1,0);
    \draw (0, -1) -- (0,0);
    \draw (0, 0) -- (0,1);
    \draw[rounded corners] (-1, 1) -- (-0.5,0.5) -- (0.5,0.5) -- (0.5,-0.5) -- (1,-1);
    \draw[color=wongblue,ultra thick] (-1,0) -- (0, 0) -- (0, -1);
},
and the weight of this configuration is $c_1 a_2 a_3$.
If the diagonal line is bent to the southwest, there are two admissible local configurations: \tikz[scale=0.5]{
    \draw (-1, 0) -- (1,0);
    \draw (0, -1) -- (0,1);
    \draw[rounded corners] (-1, 1) -- (-0.5,0.5) -- (-0.5,-0.5) -- (0.5,-0.5) -- (1,-1);
    \draw[color=wongblue,ultra thick] (-1,0) -- (-0.5,0);
    \draw[rounded corners, color=wongblue,ultra thick] (-0.5,0)--(-0.5,-0.5)--(0,-0.5);
    \draw[color=wongblue,ultra thick] (0,-0.5) -- (0,-1);
} and \tikz[scale=0.5]{
    \draw (-1, 0) -- (1,0);
    \draw (0, -1) -- (0,1);
    \draw[rounded corners] (-1, 1) -- (-0.5,0.5) -- (-0.5,-0.5) -- (0.5,-0.5) -- (1,-1);
    \draw[color=wongblue,ultra thick] (-1,0) -- (0, 0) -- (0, -1);
}, and the weights of these two admissible configurations are $a_1 c_2 c_3$ and $c_1 b_2 b_3$, respectively.
Hence, in this case, the sum of the weights of all admissible configurations is $a_1 c_2 c_3 + c_1 b_2 b_3$.
A direct calculation shows that $c_1 a_2 a_3 = a_1 c_2 c_3 + c_1 b_2 b_3 = 1$.
Thus, the weight~$\omega_v(x)$ does not depend on how the degree-$6$ vertex is resolved, meaning that it is well defined.

It is also straightforward to check that, for each of the twenty possible local configurations at a degree-$6$ vertex, the corresponding weight is independent of the choice of resolution.
Equivalently, with the vertex weights given in \cref{tab:local-weights}, the two local graphs \tikz[scale=0.5]{
    \draw (-1, 0) -- (1,0);
    \draw (0, -1) -- (0,1);
    \draw[rounded corners] (-1, 1) -- (-0.5,0.5) -- (-0.5,-0.5) -- (0.5,-0.5) -- (1,-1);
} and \tikz[scale=0.5]{
    \draw (-1, 0) -- (1, 0);
    \draw (0, -1) -- (0, 1);
    \draw[rounded corners] (-1, 1) -- (-0.5,0.5) -- (0.5,0.5) -- (0.5,-0.5) -- (1,-1);
} have the same weighted enumeration for each fixed orientation of their boundary edges.
In general, this is equivalent to saying that the vertex weights satisfy the \myemph{Yang--Baxter equation}.

Moreover, with our choice of vertex weights, all of the twenty configurations at a degree-$6$ vertex have weight~$1$.
Since $\omega_G(x)$ is defined multiplicatively and every vertex of $\mathcal{Q}_{\mathbf{k}}$ has degree~$6$, every $20$V configuration~$x$ on $\mathcal{Q}_\mathbf{k}$ satisfies $\omega_{\mathcal{Q}_{\mathbf{k}}}(x)=1$.
Thus, the weighted enumeration of $20$V configurations on $\mathcal{Q}_{\mathbf{k}}$ coincides with their ordinary enumeration.
Combining this with the local invariance of weighted enumerations by Yang--Baxter moves above, we conclude that the weighted enumeration of admissible configurations on any graph obtained from $\mathcal{Q}_{\mathbf{k}}$ by repeated applications of Yang--Baxter moves is equal to the number of $20$V configurations on $\mathcal{Q}_{\mathbf{k}}$.

\begin{table}[hbt]
    \centering
    \begin{minipage}{0.60\textwidth}
        \centering
    \begin{tabular}{c|c|c|c|c|c}
        \multicolumn{6}{c}{\textbf{H-V vertices}}\\
        \begin{tikzpicture}[scale=0.5]
            \draw[color=wongblue,ultra thick] (-1, 0) -- (0,0);
            \draw[color=wongblue,ultra thick] (0, 0) -- (1,0);
            \draw[color=wongblue,ultra thick] (0, -1) -- (0,0);
            \draw[color=wongblue,ultra thick] (0, 0) -- (0,1);
        \end{tikzpicture}
        &
        \begin{tikzpicture}[scale=0.5]
            \draw (-1, 0) -- (0,0);
            \draw (0, 0) -- (1,0);
            \draw (0, -1) -- (0,0);
            \draw (0, 0) -- (0,1);
        \end{tikzpicture}
        &
        \begin{tikzpicture}[scale=0.5]
            \draw[color=wongblue,ultra thick] (-1, 0) -- (0,0);
            \draw[color=wongblue,ultra thick] (0, 0) -- (1,0);
            \draw (0, -1) -- (0,0);
            \draw (0, 0) -- (0,1);
        \end{tikzpicture}
        &
        \begin{tikzpicture}[scale=0.5]
            \draw (-1, 0) -- (0,0);
            \draw (0, 0) -- (1,0);
            \draw[color=wongblue,ultra thick] (0, -1) -- (0,0);
            \draw[color=wongblue,ultra thick] (0, 0) -- (0,1);
        \end{tikzpicture}
        &
        \begin{tikzpicture}[scale=0.5]
            \draw[color=wongblue,ultra thick] (-1, 0) -- (0,0);
            \draw (0, 0) -- (1,0);
            \draw[color=wongblue,ultra thick] (0, -1) -- (0,0);
            \draw (0, 0) -- (0,1);
        \end{tikzpicture}
        &
        \begin{tikzpicture}[scale=0.5]
            \draw (-1, 0) -- (0,0);
            \draw[color=wongblue,ultra thick] (0, 0) -- (1,0);
            \draw (0, -1) -- (0,0);
            \draw[color=wongblue,ultra thick] (0, 0) -- (0,1);
        \end{tikzpicture}
        \\
        $a_1$
        &
        $a_1$
        &
        $b_1$
        &
        $b_1$
        &
        $c_1$
        &
        $c_1$
        \\
        \multicolumn{6}{c}{\textbf{H-D vertices}}\\
        \begin{tikzpicture}[scale=0.5]
            \node at (0, 1.5) {};
            \draw[color=wongblue,ultra thick] (-1, 0) -- (0,0);
            \draw[color=wongblue,ultra thick] (0, 0) -- (1,0);
            \draw[color=wongblue,ultra thick] (1, -1) -- (0,0);
            \draw[color=wongblue,ultra thick] (0, 0) -- (-1,1);
        \end{tikzpicture}
        &
        \begin{tikzpicture}[scale=0.5]
            \draw (-1,0) -- (0,0);
            \draw (0, 0) -- (1,0);
            \draw (1, -1) -- (0,0);
            \draw (0, 0) -- (-1,1);
        \end{tikzpicture}
        &
        \begin{tikzpicture}[scale=0.5]
            \draw[color=wongblue,ultra thick] (-1, 0) -- (0,0);
            \draw[color=wongblue,ultra thick] (0,0) -- (1,0);
            \draw (1, -1) -- (0,0);
            \draw (0, 0) -- (-1,1);
        \end{tikzpicture}
        &
        \begin{tikzpicture}[scale=0.5]
            \draw (-1, 0) -- (0,0);
            \draw (0, 0) -- (1,0);
            \draw[color=wongblue,ultra thick] (1, -1) -- (0,0);
            \draw[color=wongblue,ultra thick] (0, 0) -- (-1,1);
        \end{tikzpicture}
        &
        \begin{tikzpicture}[scale=0.5]
            \draw[color=wongblue,ultra thick] (-1, 0) -- (0,0);
            \draw (0, 0) -- (1,-0);
            \draw[color=wongblue,ultra thick] (1, -1) -- (0,0);
            \draw (0, 0) -- (-1,1);
        \end{tikzpicture}
        &
        \begin{tikzpicture}[scale=0.5]
            \draw (-1, 0) -- (0,0);
            \draw[color=wongblue,ultra thick] (0, 0) -- (1,0);
            \draw (1, -1) -- (0,0);
            \draw[color=wongblue,ultra thick] (0, 0) -- (-1,1);
        \end{tikzpicture}
        \\
        $a_2$
        &
        $a_2$
        &
        $b_2$
        &
        $b_2$
        &
        $c_2$
        &
        $c_2$
        \\
        \multicolumn{6}{c}{\textbf{V-D vertices}}\\
        \begin{tikzpicture}[scale=0.5]
            \node at (0, 1.5) {};
            \draw[color=wongblue,ultra thick] (-1, 1) -- (0,0);
            \draw[color=wongblue,ultra thick] (0, 0) -- (1,-1);
            \draw[color=wongblue,ultra thick] (0, -1) -- (0,0);
            \draw[color=wongblue,ultra thick] (0, 0) -- (0,1);
        \end{tikzpicture}
        &
        \begin{tikzpicture}[scale=0.5]
            \draw (-1, 1) -- (0,0);
            \draw (0, 0) -- (1,-1);
            \draw (0, -1) -- (0,0);
            \draw (0, 0) -- (0,1);
        \end{tikzpicture}
        &
        \begin{tikzpicture}[scale=0.5]
            \draw[color=wongblue,ultra thick] (-1, 1) -- (0,0);
            \draw[color=wongblue,ultra thick] (0, 0) -- (1,-1);
            \draw (0, -1) -- (0,0);
            \draw (0, 0) -- (0,1);
        \end{tikzpicture}
        &
        \begin{tikzpicture}[scale=0.5]
            \draw (-1, 1) -- (0,0);
            \draw (0, 0) -- (1,-1);
            \draw[color=wongblue,ultra thick] (0, -1) -- (0,0);
            \draw[color=wongblue,ultra thick] (0, 0) -- (0,1);
        \end{tikzpicture}
        &
        \begin{tikzpicture}[scale=0.5]
            \draw[color=wongblue,ultra thick] (-1, 1) -- (0,0);
            \draw (0, 0) -- (1,-1);
            \draw[color=wongblue,ultra thick] (0, -1) -- (0,0);
            \draw (0, 0) -- (0,1);
        \end{tikzpicture}
        &
        \begin{tikzpicture}[scale=0.5]
            \draw (-1, 1) -- (0,0);
            \draw[color=wongblue,ultra thick] (0, 0) -- (1,-1);
            \draw (0, -1) -- (0,0);
            \draw[color=wongblue,ultra thick] (0, 0) -- (0,1);
        \end{tikzpicture}
        \\
        $a_3$
        &
        $a_3$
        &
        $b_3$
        &
        $b_3$
        &
        $c_3$
        &
        $c_3$
    \end{tabular}
    \end{minipage}
    \hfill
    \begin{minipage}{0.35\textwidth}
    \begin{equation*}
        \begin{split}
            (a_1, b_1, c_1) &= (2^{-1/3}, 2^{1/6}, 2^{-1/3}), \\
            (a_2, b_2, c_2) &= q^3 (2^{1/6}, 2^{-1/3}, 2^{-1/3}), \\
            (a_3, b_3, c_3) &= q^{-3} (2^{1/6}, 2^{-1/3}, 2^{-1/3}), \\
            q &= e^{\pi i / 8}.
        \end{split} \label{eq:combinatorial-point-20V}
    \end{equation*}
    \end{minipage}

    \caption{The vertex weights of degree-$4$ vertices. Here, ``H-V'' means that the vertex is the intersection of a horizontal line and a vertical line. The abbreviations ``H-D'' and ``V-D'' are understood similarly, where ``D'' stands for ``diagonal.''} \label{tab:local-weights}
\end{table}

Next, we explain how Yang--Baxter moves are interpreted as probabilistic bijections.
Let $L$ and $M$ be two graphs obtained from $\mathcal{Q}_{\mathbf{k}}$ by Yang--Baxter moves, and suppose that $L$ and $M$ differ by a single Yang--Baxter move.
We identify the corresponding edges before and after the move, and require them to have the same orientation whenever they are oriented; see \cref{fig:L-and-M}.
Let $X$ and $Y$ denote the sets of admissible configurations on $L$ and $M$, respectively.

\begin{figure}[!htb]
    \centering
    \begin{tikzpicture}[scale=0.6]
        \draw (0, 4) -- (0,0) -- (4,-4) -- (4,4) -- cycle;
        \draw (2,2) circle (1);
        \draw (1,2) -- (3,2);
        \draw (2,1) -- (2,3);
        \draw[rounded corners, thick] (1.292,2.707) -- (1.5,2.5) -- (1.5,1.5) -- (2.5,1.5) -- (2.707,1.292);
        \node at (2,-4) {\Large$L$};

        \draw[<->] (5,2) -- (7, 2);

        \draw (8, 4) -- (8,0) -- (12,-4) -- (12,4) -- cycle;
        \draw (10,2) circle (1);
        \draw (9,2) -- (11,2);
        \draw (10,1) -- (10,3);
        \draw[rounded corners, thick] (9.292,2.707) -- (9.5,2.5) -- (10.5,2.5) -- (10.5,1.5) -- (10.707,1.292);
        \node at (10,-4) {\Large$M$};
    \end{tikzpicture}
    \caption{A schematic picture in which $L$ and $M$ are transformed into each other by a single Yang--Baxter move.}\label{fig:L-and-M}
\end{figure}

Let $x \in X$ and $y \in Y$.
We say that $y$ is reachable from $x$ if, when the Yang--Baxter move is applied to $x$, the configuration~$y$ has the same edge orientations as $x$ outside the domain where the Yang--Baxter move is applied.
Denote by $R(x) \subseteq Y$ all reachable configurations from $x$.
We analogously define the reachability from $y$ to $x$ and denote by $R(y)$ the set of reachable configurations from $y$.
See \cref{fig:R(a)-R(b)} for an example.
Note that $x \in R(y)$ if and only if $y \in R(x)$.
\begin{figure}[!htb]
    \centering
    \begin{tikzpicture}[scale=0.4]
        \begin{scope}
            \draw (-2, 0) -- (2, 0);
            \draw (0, -2) -- (0, 2);
            \draw[color=wongblue, very thick, rounded corners] (-2,2) -- (-1,1) -- (-1,-1) -- (1,-1) -- (2, -2);

            \draw[->] (3, 0) -- (4, 0);

            \draw (5, 0) -- (9, 0);
            \draw (7, -2) -- (7, 2);
            \draw[rounded corners] (5,2)--(6,1)--(8,1)--(8,-1)--(9,-2);
            \draw[color=wongblue, very thick,rounded corners] (5,2)--(6,1)--(7,1);
            \draw[color=wongblue, very thick] (7,1)--(7,0)--(8,0);
            \draw[color=wongblue, very thick,rounded corners] (8,0)--(8,-1)--(9,-2);

            \draw (10, 0) -- (14, 0);
            \draw (12, -2) -- (12, 2);
            \draw[color=wongblue, very thick,rounded corners] (10,2)--(11,1)--(12,1)--(13,1)--(13,-1)--(14,-2);

            \node[] at (-3.5,0) {\textbf{(a)}};
        \end{scope}
        \begin{scope}[xshift=21cm]
            \draw (-2, 0) -- (2, 0);
            \draw (0, -2) -- (0, 2);
            \draw[color=wongblue, very thick,rounded corners] (-2,2)--(-1,1)--(0,1)--(1,1)--(1,-1)--(2,-2);

            \draw[->] (3, 0) -- (4, 0);

            \draw (5, 0) -- (9, 0);
            \draw (7, -2) -- (7, 2);
            \draw[rounded corners] (5,2) -- (6,1) -- (6,-1) -- (8,-1) -- (9, -2);
            \draw[color=wongblue,very thick, rounded corners] (5,2)--(6,1)--(6,0);
            \draw[color=wongblue,very thick] (6,0)--(7,0)--(7,-1);
            \draw[color=wongblue,very thick, rounded corners] (7,-1)--(8,-1)--(9,-2);

            \draw (10, 0) -- (14, 0);
            \draw (12, -2) -- (12, 2);
            \draw[color=wongblue, very thick, rounded corners] (10,2) -- (11,1) -- (11,-1) -- (13,-1) -- (14, -2);

            \node[] at (-3.5,0) {\textbf{(b)}};
        \end{scope}
    \end{tikzpicture}
    \caption{In (a), a Yang--Baxter move locally flips the diagonal line from the southwest side to the northeast side. Under this move, the configuration on the left has two reachable configurations, shown on the right. In (b), a Yang--Baxter move locally flips the diagonal line from the northeast side to the southwest side. Under this move, the configuration on the left has two reachable configurations, shown on the right.}\label{fig:R(a)-R(b)}
\end{figure}

The following proposition shows that each Yang--Baxter move induces a probabilistic bijection.
\begin{prop} \label{prop:Yang--Baxter-moves-as-probabilistic-bijections}
    Define the maps~$\mathcal{P}, \overline{\mathcal{P}} \colon X \times Y \to \CC$ as follows:
    \begin{align*}
        \mathcal{P}(x, y) &= \begin{cases}
        \omega_M(y) \cdot \rbra*{\sum\limits_{y' \in R(x)} \omega_M(y')}^{-1} & \text{if}\ y \in R(x), \\
            0 & \text{if}\ y \notin R(x),
        \end{cases}\\
        \overline{\mathcal{P}}(x, y) &= \begin{cases}
        \omega_L(x) \cdot \rbra*{\sum\limits_{x' \in R(y)} \omega_L(x')}^{-1} & \text{if}\ x \in R(y),\\
            0 & \text{if}\ x \notin R(y).
        \end{cases}
    \end{align*}
    Then the pair~$(\mathcal{P}, \overline{\mathcal{P}})$ is a probabilistic bijection from $(X, \omega_L)$ to $(Y, \omega_M)$.
\end{prop}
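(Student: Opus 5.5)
We verify the three conditions of \cref{defi:probabilistic-bijections}. Conditions (1) and (2) are immediate from the definitions once we know the normalizing sums are nonzero. For fixed $x$, $\sum_{y \in Y}\mathcal{P}(x,y) = \sum_{y \in R(x)} \omega_M(y)\cdot(\sum_{y' \in R(x)}\omega_M(y'))^{-1} = 1$, and symmetrically for $\overline{\mathcal{P}}$. Here $\CC$ is commutative, so the order of factors plays no role.

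The key structural step comes first. We observe that reachability is determined entirely by the boundary of the local region where the move is applied. Let $\partial$ denote the set of edges of the local region, namely the six half-edges crossing the circle in \cref{fig:L-and-M}, together with all edges outside it. A configuration $x \in X$ restricts to an orientation $\sigma$ of these outer edges. Then $R(x)$ is exactly the set of $y \in Y$ with the same restriction $\sigma$, and likewise $R(y)$ consists of all $x' \in X$ with the restriction of $y$. Hence $X$ and $Y$ are partitioned into classes $X_\sigma$, $Y_\sigma$ indexed by outer orientations $\sigma$. We have $R(x) = Y_\sigma$ for $x \in X_\sigma$ and $R(y) = X_\sigma$ for $y \in Y_\sigma$.

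Next, we factor the weights. Since weights are multiplicative over vertices, for $x \in X_\sigma$ we write $\omega_L(x) = W_\sigma \cdot w_L(x)$. Here $W_\sigma$ is the product of vertex weights outside the local region, which depends only on $\sigma$ and is identical for $L$ and $M$. The factor $w_L(x)$ is the product over the three degree-$4$ vertices inside the region. Analogously, $\omega_M(y) = W_\sigma\, w_M(y)$. The Yang--Baxter property stated before the proposition says precisely that the two local graphs have equal weighted enumeration for each fixed boundary orientation. Consequently
\[
\sum_{x' \in X_\sigma}\omega_L(x') = W_\sigma Z_\sigma = \sum_{y' \in Y_\sigma}\omega_M(y'),
\]
where $Z_\sigma$ is the common local partition function.

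Condition (3) then follows. If $y \notin R(x)$, both sides vanish, using $x\in R(y)\iff y\in R(x)$. If $y \in R(x)$, then both $x$ and $y$ lie in the same class $\sigma$, and
\[
\omega_L(x)\mathcal{P}(x,y) = \frac{\omega_L(x)\,\omega_M(y)}{W_\sigma Z_\sigma} = \overline{\mathcal{P}}(x,y)\,\omega_M(y).
\]

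The main obstacle is ensuring that the normalizers $W_\sigma Z_\sigma$ are nonzero whenever $X_\sigma$ or $Y_\sigma$ is nonempty. Since the weights are complex, with phases $q^{\pm 3}$, cancellation in $Z_\sigma$ is a priori possible. The factor $W_\sigma$ is a product of nonzero numbers. For $Z_\sigma$, I would argue as in the worked example: the local sum equals the degree-$6$ vertex weight, which the paper asserts equals $1$ for all twenty admissible local configurations. Every nonempty class corresponds to one of these twenty ice-rule configurations of the six outer half-edges, so $Z_\sigma = 1 \neq 0$. If needed, one can check directly by cases on the twenty local boundary orientations that the one- or two-term sums are nonzero.
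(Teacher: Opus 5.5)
Your proof is correct and follows the same route as the paper: verify conditions (1) and (2) directly, then use the Yang--Baxter equation on the local region to identify $\sum_{y'\in R(x)}\omega_M(y')$ with $\sum_{x'\in R(y)}\omega_L(x')$. You go further than the paper by writing out the factorization $W_\sigma Z_\sigma$ and checking that the normalizers are nonzero, which the paper leaves implicit; the one small nit is that for a bend move one side of the region is a single degree-$6$ vertex rather than three degree-$4$ vertices, but there the equality of local sums holds by the definition of the degree-$6$ weight, so your argument goes through unchanged.
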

\begin{proof}
    It is straightforward to check the first two conditions of \cref{defi:probabilistic-bijections}.
    For the third condition, let $x \in X$ and $y \in Y$.
    If $y \notin R(x)$, then $x \notin R(y)$ and both sides of the equation are zero.
    If $y \in R(x)$, then $x \in R(y)$ and we have
    \begin{equation*}
        \omega_L(x) \mathcal{P}(x, y) = \omega_L(x) \cdot \omega_M(y) \cdot \rbra*{\sum_{y' \in R(x)} \omega_M(y')}^{-1} = \omega_M(y) \cdot \omega_L(x) \cdot \rbra*{\sum_{x' \in R(y)} \omega_L(x')}^{-1}
        = \omega_M(y) \overline{\mathcal{P}}(x, y),
    \end{equation*}
    where the second equality follows from the Yang--Baxter equation for the local domain where the Yang--Baxter move is applied.
\end{proof}

\subsection{A probabilistic bijection from \texorpdfstring{$20$}{20}V configurations to mixed \texorpdfstring{$6$}{6}V configurations}

It is shown in \cite[Section 2.3]{MR4395233} that the domain~$\mathcal{Q}_{(1, 2, \ldots, n)}$ can be transformed to the domain~$\mathcal{M}_{(1, 2, \ldots, n)}$ with some frozen domains attached to it by applying a sequence of Yang--Baxter moves.

We generalize this transformation to the case of the domains~$\mathcal{Q}_\mathbf{k}$ and $\mathcal{M}_\mathbf{k}$ for arbitrary strictly increasing sequences~$\mathbf{k} = (k_1, k_2, \ldots, k_n)$.
We illustrate the transformation from $\mathcal{Q}_\mathbf{k}$ to $\mathcal{M}_\mathbf{k}$ in \cref{fig:20V-to-mixed-6V}, where the horizontal, vertical, and diagonal lines in the initial domain $\mathcal{Q}_\mathbf{k}$ are labeled by $z_i$, $w_i$, and $t_i$, respectively.
Namely, the transformation from $\mathcal{Q}_\mathbf{k}$ to $\mathcal{M}_\mathbf{k}$ is achieved by repeatedly applying Yang--Baxter moves so as to lift and straighten the diagonal lines into ``hooks''.
During this process, several frozen domains (i.e., domains with a unique admissible configuration) appear naturally; these are denoted by $F_1,\dots,F_4$ in Figure~\ref{fig:20V-to-mixed-6V}.
In the resulting mixed 6V domain $\mathcal{M}_\mathbf{k}$, vertices in odd rows are intersections of $z_i$- and $w_i$-lines, while vertices in even rows are intersections of $t_i$- and $w_i$-lines.

Figure~\ref{fig:20V-to-mixed-6V-local} illustrates how a diagonal line is locally transformed.
Each Yang--Baxter move lifts the diagonal line locally.
Repeated application of these moves converts the diagonal into a ``hook''.

\begin{figure}[htb]
    \centering
    \includegraphics[scale=0.59]{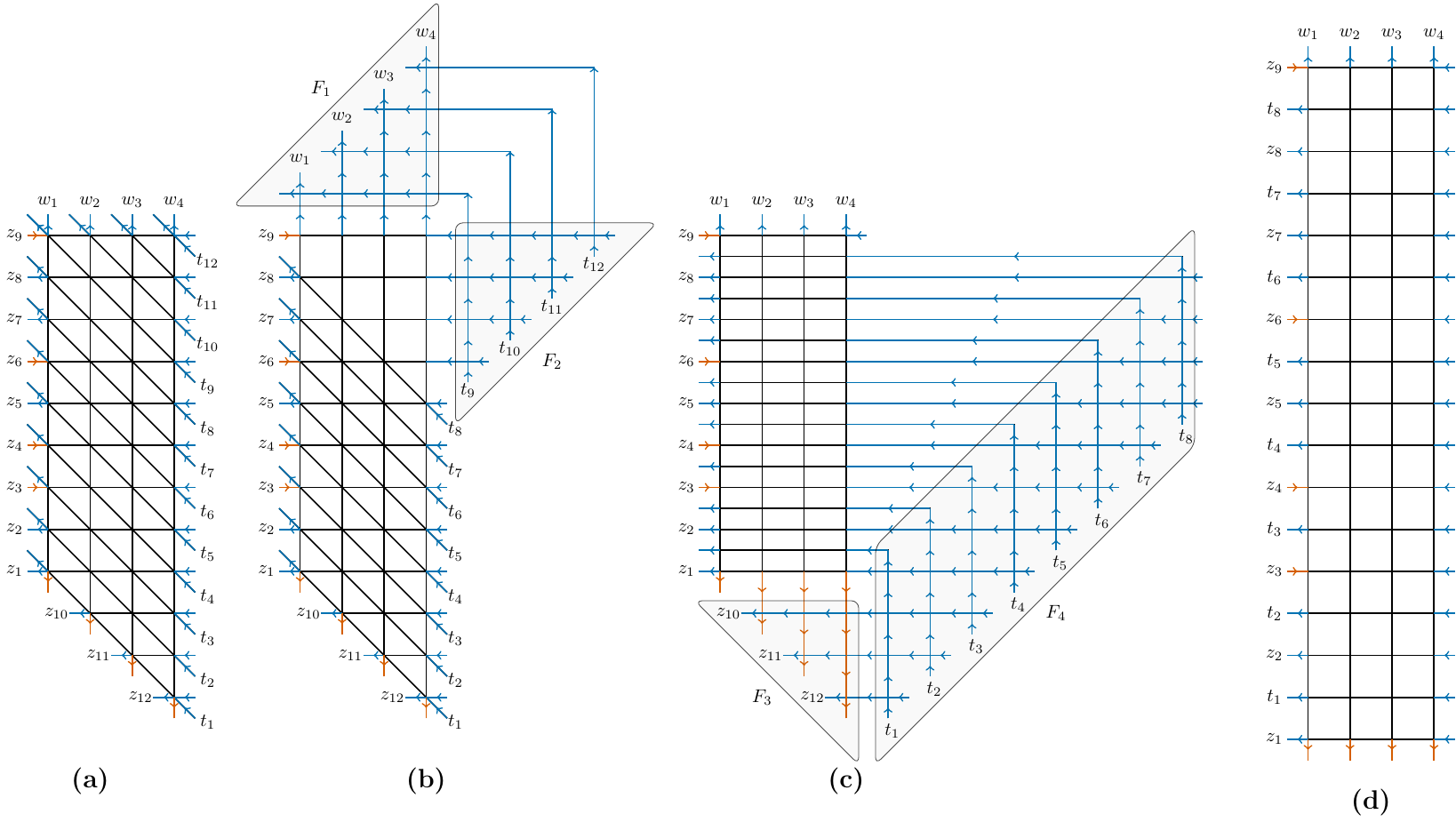}
    \caption{Transformation from the 20V domain $\mathcal \mathcal{Q}_\mathbf{k}$ to the mixed 6V domain $\mathcal \mathcal{M}_\mathbf{k}$ for $\mathbf{k}=(3,4,6,9)$. (a) The initial domain $\mathcal{Q}_\mathbf{k}$. (b) The domain obtained by lifting the top $n$ diagonal lines upward via Yang--Baxter moves. (c) The domain obtained from (b) after removing the frozen domains $F_1$ and $F_2$ and lifting the remaining diagonal lines upward. (d) The mixed 6V domain $\mathcal{M}_\mathbf{k}$, obtained after removing the frozen domains $F_3$ and $F_4$ in (c).} \label{fig:20V-to-mixed-6V}
\end{figure}
\begin{figure}[htb]
    \centering
    \includegraphics[scale=0.7]{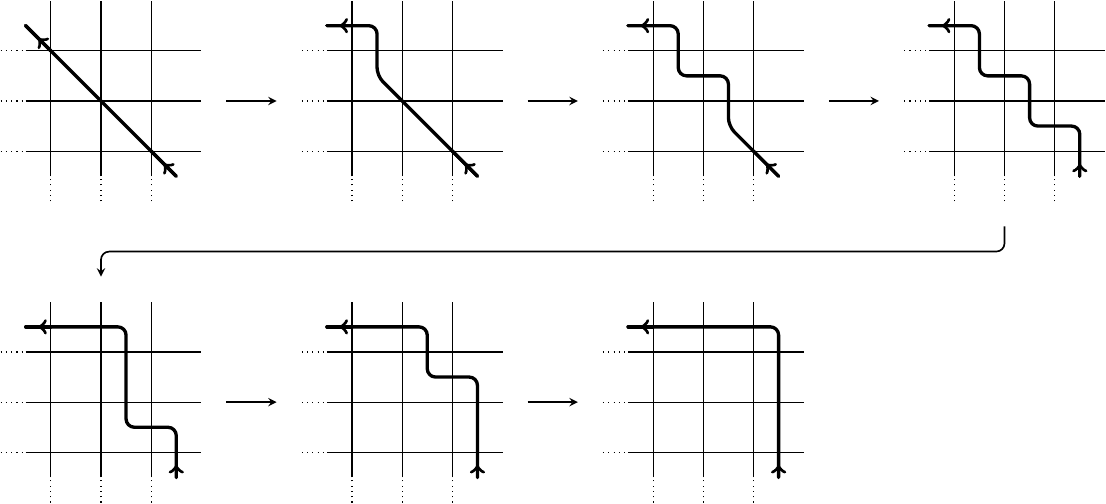}
    \caption{Local lifting and straightening procedure for the diagonal line~$t_{10}$. Starting from the diagonal line, repeated Yang--Baxter moves lift the line upward and eventually straighten it into a ``hook''. For simplicity, the other diagonal lines are omitted.} \label{fig:20V-to-mixed-6V-local}
\end{figure}
When these frozen domains are removed, their weights factor out as
constants independent of the remaining mixed 6V configuration.
For example, the frozen domain $F_1$ consists of $n(n+1)/2$ vertices
of weight $a_3$, and hence contributes the factor
$a_3^{n(n+1)/2}$.
Multiplying the contributions from $F_1,\ldots,F_4$, we obtain the constant factor
\begin{equation}
    \mathcal{C}_\mathbf{k} \coloneqq b_1^{n(n-1)/2} a_2^{n(n-1)/2+n k_n} a_3^{n(n+1)/2}.\label{eq:frozen-domains-constant}
\end{equation}

Here, we clarify how the weight of a mixed $6$V configuration is calculated.
For a mixed $6$V configuration, its weight is the product of the weights of the vertices, with the vertex weights given in \cref{tab:local-weights}.
More precisely, a vertex in an odd row contributes one of $a_1,b_1,c_1$, while a vertex in an even row contributes one of $a_3,b_3,c_3$.
This convention arises from how the lines in the domain~$\mathcal{Q}_{\mathbf{k}}$ are transformed: in the even-row case, we match the local configuration with one in the third row of \cref{tab:local-weights} by rotating the diagonal line by $45^\circ$ counterclockwise.
See \cref{exam:inv-crossing} for an example.
All possible vertex types appearing in mixed $6$V configurations, together with their weights, are listed in \cref{tab:m6V-local-weights}.
Since this weighting is consistent with the weight~$\omega_G(x)$ defined above, we also write $\omega_{\mathcal{M}_{\mathbf{k}}}(x)$ for the weight of a mixed $6$V configuration~$x$.

By \cref{lemma:probabilistic-bijection-composition,prop:Yang--Baxter-moves-as-probabilistic-bijections}, a sequence of Yang--Baxter moves induces a probabilistic bijection between the sets of admissible configurations on the initial and final domains.
Applying this to the Yang--Baxter moves transforming the domain~$\mathcal{Q}_\mathbf{k}$ to the domain~$\mathcal{M}_\mathbf{k}$, we obtain the following proposition.
\begin{prop}\label{prop:twenty-mixed6V-prob-bij}
    Let $\mathbf{k} = (k_1, k_2, \ldots, k_n)$ be a strictly increasing sequence of positive integers.
    We choose the vertex weights as in \cref{tab:local-weights}.
    Then, there exists a probabilistic bijection between the set of $20$V configurations on the domain~$\mathcal{Q}_\mathbf{k}$ with the constant weight function~$1$ and the set of mixed $6$V configurations on the domain~$\mathcal{M}_\mathbf{k}$ with the weight function~$\mathcal{C}_\mathbf{k} \cdot \omega_{\mathcal{M}_\mathbf{k}}(\cdot)$.
\end{prop}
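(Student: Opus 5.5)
We will obtain the probabilistic bijection by composing the elementary ones of \cref{prop:Yang--Baxter-moves-as-probabilistic-bijections} along the sequence of Yang--Baxter moves carrying $\mathcal{Q}_\mathbf{k}$ to $\mathcal{M}_\mathbf{k}$ with the frozen domains $F_1,\dots,F_4$ attached, as in \cref{fig:20V-to-mixed-6V}. Concretely, let $G_0=\mathcal{Q}_\mathbf{k},G_1,\dots,G_N$ be the successive graphs, with $G_N$ equal to $\mathcal{M}_\mathbf{k}$ together with $F_1,\dots,F_4$. Let $X_s$ be the set of admissible configurations of $G_s$, weighted by $\omega_{G_s}$. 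Each consecutive pair $(G_{s-1},G_s)$ differs by a single local bend or flip move. \Cref{prop:Yang--Baxter-moves-as-probabilistic-bijections} then gives a probabilistic bijection from $(X_{s-1},\omega_{G_{s-1}})$ to $(X_s,\omega_{G_s})$. Iterating \cref{lemma:probabilistic-bijection-composition} yields a probabilistic bijection from $(X_0,\omega_{\mathcal{Q}_\mathbf{k}})$ to $(X_N,\omega_{G_N})$.

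Two identifications remain. First, every vertex of $\mathcal{Q}_\mathbf{k}$ has degree six, and with the weights of \cref{tab:local-weights} all twenty local configurations have weight $1$, so $\omega_{\mathcal{Q}_\mathbf{k}}\equiv 1$ on $20$V configurations. Second, we need a weight-preserving bijection $X_N\to\{\text{mixed $6$V configurations on }\mathcal{M}_\mathbf{k}\}$ sending $\omega_{G_N}$ to $\mathcal{C}_\mathbf{k}\,\omega_{\mathcal{M}_\mathbf{k}}$. For this we argue that each $F_i$ is frozen: its boundary orientations, inherited from the north, south, east and west conditions, force a unique orientation inside, by propagating the ice rule from a corner. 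Hence restriction to $\mathcal{M}_\mathbf{k}$ is a bijection. It also sends the edges of $G_N$ that cross the interface between $F_i$ and $\mathcal{M}_\mathbf{k}$ to the prescribed boundary orientations of $\mathcal{M}_\mathbf{k}$. In particular, the west incoming edges at rows $2k_\ell-1$ come from the $z$-lines at positions $k_\ell$, and the even rows coming from the $t$-lines carry outgoing west edges.

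Multiplicativity of $\omega_{G_N}$ then gives $\omega_{G_N}(x)=\bigl(\prod_i \omega(F_i)\bigr)\,\omega_{\mathcal{M}_\mathbf{k}}(x|_{\mathcal{M}_\mathbf{k}})$. Counting vertices and types in the frozen regions should give $\prod_i\omega(F_i)=\mathcal{C}_\mathbf{k}$ as in \eqref{eq:frozen-domains-constant}. A bijection preserving weights is a probabilistic bijection, namely the special case of \cref{lemm:proper-weight-distribution-is-prob-bij} with singleton fibres. One final application of \cref{lemma:probabilistic-bijection-composition} finishes the proof.

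The main obstacle is making the move sequence rigorous for a general strictly increasing $\mathbf{k}$, rather than only for $(1,\dots,n)$ as in \cite{MR4395233}. Three things must be checked. Each diagonal $t_i$ is lifted into a hook using only bend and flip moves. The gaps in $\mathbf{k}$ only enlarge the west part of the domain, where the diagonals cross $z$-lines with outgoing west boundary. The frozen regions have exactly the shapes and sizes producing the exponents $n(n-1)/2$, $n(n-1)/2+nk_n$ and $n(n+1)/2$. I would first handle one diagonal at a time, by induction from the top diagonal downward, verifying the local picture of \cref{fig:20V-to-mixed-6V-local}. I would then record the frozen vertices created at each stage, for instance the $a_2$-vertices accumulated as a hook passes through the $k_n$ rows, to assemble $\mathcal{C}_\mathbf{k}$.
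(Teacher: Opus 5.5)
Your proposal is correct and follows the paper's argument: you compose the single-move probabilistic bijections of \cref{prop:Yang--Baxter-moves-as-probabilistic-bijections} via \cref{lemma:probabilistic-bijection-composition} along the move sequence of \cref{fig:20V-to-mixed-6V}, use $\omega_{\mathcal{Q}_\mathbf{k}}\equiv 1$, and absorb the frozen domains $F_1,\dots,F_4$ into the constant $\mathcal{C}_\mathbf{k}$. Treating the removal of the frozen domains as a final weight-preserving bijection only makes explicit what the paper leaves implicit; like the paper, you rely on the figures rather than a formal argument for the general move sequence and for the value of $\mathcal{C}_\mathbf{k}$.
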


\begin{table}[htb]
    \centering
    \begin{tabular}{c||c|c|c|c|c|c}
        Vertex type
        &
        \begin{tikzpicture}[scale=0.5]
            \draw[color=wongblue,ultra thick] (-1, 0) -- (0,0);
            \draw[color=wongblue,ultra thick] (0, 0) -- (1,0);
            \draw[color=wongblue,ultra thick] (0, -1) -- (0,0);
            \draw[color=wongblue,ultra thick] (0, 0) -- (0,1);
        \end{tikzpicture}
        &
        \begin{tikzpicture}[scale=0.5]
            \draw (-1, 0) -- (0,0);
            \draw (0, 0) -- (1,0);
            \draw (0, -1) -- (0,0);
            \draw (0, 0) -- (0,1);
        \end{tikzpicture}
        &
        \begin{tikzpicture}[scale=0.5]
            \draw[color=wongblue,ultra thick] (-1, 0) -- (0,0);
            \draw[color=wongblue,ultra thick] (0, 0) -- (1,0);
            \draw (0, -1) -- (0,0);
            \draw (0, 0) -- (0,1);
        \end{tikzpicture}
        &
        \begin{tikzpicture}[scale=0.5]
            \draw (-1, 0) -- (0,0);
            \draw (0, 0) -- (1,0);
            \draw[color=wongblue,ultra thick] (0, -1) -- (0,0);
            \draw[color=wongblue,ultra thick] (0, 0) -- (0,1);
        \end{tikzpicture}
        &
        \begin{tikzpicture}[scale=0.5]
            \draw[color=wongblue,ultra thick] (-1, 0) -- (0,0);
            \draw (0, 0) -- (1,0);
            \draw[color=wongblue,ultra thick] (0, -1) -- (0,0);
            \draw (0, 0) -- (0,1);
        \end{tikzpicture}
        &
        \begin{tikzpicture}[scale=0.5]
            \draw (-1, 0) -- (0,0);
            \draw[color=wongblue,ultra thick] (0, 0) -- (1,0);
            \draw (0, -1) -- (0,0);
            \draw[color=wongblue,ultra thick] (0, 0) -- (0,1);
        \end{tikzpicture}
        \\
        \hline
        Odd row
        &
        $a_1$
        &
        $a_1$
        &
        $b_1$
        &
        $b_1$
        &
        $c_1$
        &
        $c_1$
        \\
        \hline
        Even row
        &
        $a_3$
        &
        $a_3$
        &
        $b_3$
        &
        $b_3$
        &
        $c_3$
        &
        $c_3$
    \end{tabular}
    \caption{Vertex weights for mixed $6$V configurations. Here, the weights depend on the parity of the row containing the vertex. The rows are numbered $1,2,\ldots,2k_n-1$ from bottom to top.} \label{tab:m6V-local-weights}
\end{table}

\section{The weights of mixed \texorpdfstring{$6$}{6}V configurations in terms of the variant inversion number} \label{sec:comb-desc}

In this section, we express the weights of mixed 6V configurations in terms of the variant inversion number, an integer-valued statistic.
In \cref{sec:map-mixed-6V-to-amtri}, this expression will be used to prove a relation between the weights of mixed $6$V configurations and those of triple-free GT patterns (\cref{theo:psi-is-correct-map}).

Let $x$ be a mixed $6$V configuration on the domain~$\mathcal{M}_\mathbf{k}$.
We assign a type to each local configuration at a vertex according to the dictionary in \cref{fig:vertex-types}.
Let $\mathcal{N}_{(j)}^{i}(x)$ denote the number of type-$j$ configurations in the $i$-th row of $x$.
Define $\mathcal{N}_{(j)}^{\odd}(x)$ and $\mathcal{N}_{(j)}^{\even}(x)$ by
\begin{equation*}
    \mathcal{N}_{(j)}^{\odd}(x) = \sum_{i=1}^{k_n} \mathcal{N}_{(j)}^{2i-1}(x) \quad\text{and}\quad \mathcal{N}_{(j)}^{\even}(x) = \sum_{i=1}^{k_n-1} \mathcal{N}_{(j)}^{2i}(x).
\end{equation*}

\begin{figure}[htb]
    \centering
    \begin{tikzpicture}
        \begin{scope}[very thick,decoration={
        markings,
        mark=at position 0.6 with {\arrow{>}}}
        ]
            \draw[postaction={decorate}] (-1,0) -- (0,0);
            \draw[postaction={decorate}] (0,0) -- (1,0);
            \draw[postaction={decorate}] (0,1) -- (0,0);
            \draw[postaction={decorate}] (0,0) -- (0,-1);
            \node at (0,-2) {\large(1)};
        \end{scope}
        \begin{scope}[shift={(2.5,0)},
        very thick,decoration={
        markings,
        mark=at position 0.6 with {\arrow{>}}}
        ]
            \draw[postaction={decorate}] (0,0) -- (-1,0);
            \draw[postaction={decorate}] (1,0) -- (0,0);
            \draw[postaction={decorate}] (0,0) -- (0,1);
            \draw[postaction={decorate}] (0,-1) -- (0,0);
            \node at (0,-2) {\large(2)};
        \end{scope}
        \begin{scope}[shift={(5,0)},
        very thick,decoration={
        markings,
        mark=at position 0.6 with {\arrow{>}}}
        ]
            \draw[postaction={decorate}] (-1,0) -- (0,0);
            \draw[postaction={decorate}] (0,0) -- (1,0);
            \draw[postaction={decorate}] (0,0) -- (0,1);
            \draw[postaction={decorate}] (0,-1) -- (0,0);
            \node at (0,-2) {\large(3)};
        \end{scope}
        \begin{scope}[shift={(7.5,0)},
        very thick,decoration={
        markings,
        mark=at position 0.6 with {\arrow{>}}}
        ]
            \draw[postaction={decorate}] (0,0) -- (-1,0);
            \draw[postaction={decorate}] (1,0) -- (0,0);
            \draw[postaction={decorate}] (0,1) -- (0,0);
            \draw[postaction={decorate}] (0,0) -- (0,-1);
            \node at (0,-2) {\large(4)};
        \end{scope}
        \begin{scope}[shift={(10,0)},
        very thick,decoration={
        markings,
        mark=at position 0.6 with {\arrow{>}}}
        ]
            \draw[postaction={decorate}] (-1,0) -- (0,0);
            \draw[postaction={decorate}] (1,0) -- (0,0);
            \draw[postaction={decorate}] (0,0) -- (0,1);
            \draw[postaction={decorate}] (0,0) -- (0,-1);
            \node at (0,-2) {\large(5)};
        \end{scope}
        \begin{scope}[shift={(12.5,0)},
        very thick,decoration={
        markings,
        mark=at position 0.6 with {\arrow{>}}}
        ]
            \draw[postaction={decorate}] (0,0) -- (-1,0);
            \draw[postaction={decorate}] (0,0) -- (1,0);
            \draw[postaction={decorate}] (0,1) -- (0,0);
            \draw[postaction={decorate}] (0,-1) -- (0,0);
            \node at (0,-2) {\large(6)};
        \end{scope}
    \end{tikzpicture}
    \caption{The $6$ possible vertex configurations and their types.} \label{fig:vertex-types}
\end{figure}

\begin{defi}
    We define the \myemph{variant inversion number}~$\ic(x)$ of a mixed $6$V configuration~$x$ as follows:
    \begin{equation*}
        \ic(x) = \mathcal{N}_{(1)}^{\even}(x) + \mathcal{N}_{(3)}^{\odd}(x).
    \end{equation*}
\end{defi}

\begin{rema} \label{rema:why-variant-inversion-number}
    The name ``variant inversion number'' comes from the fact that $\ic(x) = \inv(A) + \mathcal{N}_{(1)}^{\even}(A) - \mathcal{N}_{(3)}^{\even}(A)$ holds.
    Here, $A$ is the $(2k_n-1)\times n$-matrix with entries in $\{0,\pm1\}$ obtained from $x$ by interpreting the type-$5$ vertices as $1$, the type-$6$ vertices as $-1$, and the other vertices as $0$, and $\inv(A)$ is the generalized inversion number defined by $\inv(A) = \sum_{1 \leq i' < i \leq 2k_n-1, 1 \leq j' \leq j \leq n} A_{i', j} A_{i, j'}$.
    This fact can be proved by an argument similar to the proof of \cref{theo:inv-crossing} below by calculating the changes of the inversion number~$\inv(x)$ in addition to the other quantities.

    Experiments suggest that the distribution of the statistic~$\ic(x)$ may be identical to that of $\inv(x)$.
    In other words, for each non-negative integer $i$, the number of mixed $6$V configurations $x$ on the domain~$\mathcal{M}_\mathbf{k}$ with $\ic(x) = i$ equals the number of those with $\inv(x) = i$.
    We verified this observation by computer for $\mathbf{k} = (1,2,\ldots,n)$ with $1 \leq n \leq 5$.
\end{rema}

The following theorem gives an expression for the weights of mixed $6$V configurations in terms of the variant inversion number.

\begin{theo} \label{theo:inv-crossing}
    Let $\mathbf{k} = (k_1, k_2, \ldots, k_n)$ be a strictly increasing sequence of positive integers.
    Let $x$ be a mixed $6$V configuration on the domain~$\mathcal{M}_\mathbf{k}$.
    Then, we have
    \begin{equation}
        \mathcal{C}_\mathbf{k} \cdot \omega_{\mathcal{M}_\mathbf{k}}(x) = 2^{\ic(x)}. \label{eq:comb-desc}
    \end{equation}
    (See \cref{eq:frozen-domains-constant} for the definition of $\mathcal{C}_\mathbf{k}$.)
\end{theo}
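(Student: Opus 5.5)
We would separate the weight $\mathcal{C}_\mathbf{k}\cdot\omega_{\mathcal{M}_\mathbf{k}}(x)$ into a phase part (powers of $q$) and a modulus part (powers of $2$), and show that the phase is $1$ and the power of $2$ equals $\ic(x)$.

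\emph{Phase.} The phase is independent of $x$. Every even-row vertex contributes exactly one factor $q^{-3}$, whatever its type, and there are $n(k_n-1)$ even-row vertices. Odd-row vertices contribute no phase. The constant $\mathcal{C}_\mathbf{k}$ in \cref{eq:frozen-domains-constant} contributes $q^{3(n(n-1)/2+nk_n)}$ through $a_2$ and $q^{-3n(n+1)/2}$ through $a_3$. Adding the exponents gives
\[
3\Bigl(\tfrac{n(n-1)}{2}+nk_n-\tfrac{n(n+1)}{2}-n(k_n-1)\Bigr)=0,
\]
so the phase cancels.

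\emph{Power of $2$, row by row.} From \cref{tab:m6V-local-weights}:
\begin{itemize}
\item an odd row contributes $2^{-n/3+\frac12\mathcal{N}^{i}_{(3)}+\frac12\mathcal{N}^{i}_{(4)}}$, since $b_1=2^{1/6}$ and $a_1=c_1=2^{-1/3}$;
\item an even row contributes $2^{-n/3+\frac12\mathcal{N}^{i}_{(1)}+\frac12\mathcal{N}^{i}_{(2)}}$, since $|a_3|=2^{1/6}$ and $|b_3|=|c_3|=2^{-1/3}$.
\end{itemize}
So \cref{eq:comb-desc} reduces to the single linear identity
\[
\mathcal{N}^{\odd}_{(4)}-\mathcal{N}^{\odd}_{(3)}+\mathcal{N}^{\even}_{(2)}-\mathcal{N}^{\even}_{(1)}=D_\mathbf{k},
\]
where $D_\mathbf{k}$ is a constant depending only on $\mathbf{k}$ and read off from the modulus of $\mathcal{C}_\mathbf{k}$ and the terms $-n/3$ per row.

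\emph{Proving the identity by local moves.} We would follow the hint in \cref{rema:why-variant-inversion-number} and argue by local moves rather than by a global counting formula.
\begin{enumerate}
\item Fix a convenient reference configuration $x_0$ on $\mathcal{M}_\mathbf{k}$. A natural choice is the one in which each path goes as far right as possible before descending, so that it is nearly frozen. Compute both sides of \cref{eq:comb-desc} directly for $x_0$. This step also checks the frozen-domain bookkeeping behind $\mathcal{C}_\mathbf{k}$.
\item Use the standard fact that any two osculating path families with the same boundary are connected by elementary flips. Such a flip replaces a path corner ``right then down'' at one corner of a unit face by ``down then right'' at the opposite corner, or the reverse. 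Only the four vertices of that face change type.
\item For each flip, tabulate the type changes at the four corners. The face lies either between an odd row (above) and an even row (below), or the reverse. Then verify that the change in $\frac12(\mathcal{N}^{\odd}_{(3)}+\mathcal{N}^{\odd}_{(4)}+\mathcal{N}^{\even}_{(1)}+\mathcal{N}^{\even}_{(2)})$ equals the change in $\ic=\mathcal{N}^{\even}_{(1)}+\mathcal{N}^{\odd}_{(3)}$.
\end{enumerate}

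\emph{Main obstacle.} The difficulty is the case analysis in step 3. The four corner vertices can each be of several types depending on neighbouring paths, including osculations of type (1) and crossings with other paths. The two parities of the face must be treated separately because the weight tables differ. I expect each case to change both sides by the same amount in $\{0,\pm1\}$, and we would organize the check by the occupancy of the four outer edges of the face.

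If the flip analysis turns out too messy, the fallback is direct counting. Use the row conservation laws: the number of occupied vertical edges below row $i$ equals $\mathcal{N}^i_{(1)}+\mathcal{N}^i_{(4)}+\mathcal{N}^i_{(5)}$, and analogous identities hold for edges above and for horizontal edges. Then telescope over pairs of consecutive rows to show that the alternating combination above depends only on the boundary data.
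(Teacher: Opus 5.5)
Your proposal is correct and is essentially the paper's proof: it uses the same reference configuration $x_{\max}$, induction over corner flips, and a local check at the four corners of the flipped face. Your global cancellation of the $x$-independent phase, and the reduction to flip-invariance of $Q=\mathcal{N}^{\odd}_{(4)}-\mathcal{N}^{\odd}_{(3)}+\mathcal{N}^{\even}_{(2)}-\mathcal{N}^{\even}_{(1)}$ (which equals $nk_n-n(n+1)/2$, as one checks on $x_{\max}$), are just a tidier bookkeeping of the paper's comparison of $\omega_{\mathcal{M}_\mathbf{k}}(x')/\omega_{\mathcal{M}_\mathbf{k}}(x)$ with $2^{\ic(x')-\ic(x)}$, and in your formulation the case analysis you worry about is trivial, because each corner changes $Q$ by an amount independent of the outer edges (for the flip moving the path from the top and right edges of the face to the left and bottom ones: $+1$ at the top-left and $-1$ at the bottom-left corner when the lower row is even, $+1$ at the top-right and $-1$ at the bottom-right corner when it is odd, and $0$ elsewhere).
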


\begin{exam} \label{exam:inv-crossing}
    Let us illustrate \cref{theo:inv-crossing} with an example.
    Let $x$ be the following configuration on the domain~$\mathcal{M}_{(1,2,3,4)}$: \begin{center}
        \begin{tikzpicture}[scale=0.6,baseline=(current bounding box.center)]
            \draw (1,1) -- (4,1);
            \draw (1,2) -- (4,2);
            \draw (1,3) -- (4,3);
            \draw (1,4) -- (4,4);
            \draw (1,5) -- (4,5);
            \draw (1,6) -- (4,6);
            \draw (1,7) -- (4,7);

            \draw (1,1) -- (1,7);
            \draw (2,1) -- (2,7);
            \draw (3,1) -- (3,7);
            \draw (4,1) -- (4,7);

            \draw[color=wongblue, ultra thick] (0.5,1) -- (1,1)--(1,0.5);
            \draw[color=wongblue, ultra thick] (0.5,3) -- (1,3)--(2,3)--(2,1)--(2,0.5);
            \draw[color=wongblue, ultra thick] (0.5,5)--(1,5)--(1,4)--(2,4)--(2,3)--(3,3)--(3,1)--(3,0.5);
            \draw[color=wongblue, ultra thick] (0.5,7)--(1,7)--(2,7)--(2,4)--(4,4)--(4,1)--(4,0.5);
        \end{tikzpicture}.
    \end{center}
    We have $\mathcal{N}_{(1)}^{\even}(x) = 1$ and $\mathcal{N}_{(3)}^{\odd}(x) = 2$. Thus, we compute $\ic(x) = 1 + 2 = 3$.
    Meanwhile, the prefactor~$\mathcal{C}_\mathbf{k}$ for $\mathbf{k} = (1, 2, 3, 4)$ is
    \begin{equation*}
        \mathcal{C}_\mathbf{k}
        = b_1^{4\cdot 3 / 2} a_2^{4\cdot 11 / 2} a_3^{4\cdot 5 / 2}
        = 2^{6+1/3}q^4,
    \end{equation*}
    and the weight~$\omega_{\mathcal{M}_{(1,2,3,4)}}(x)$ of the configuration $x$ is
    \begin{align*}
        \omega_{\mathcal{M}_{(1,2,3,4)}}(x) &= b_1 c_1 a_1 a_1 \cdot a_3 b_3 a_3 a_3 \cdot c_1 b_1 a_1 a_1 \cdot c_3 a_3 b_3 c_3 \cdot b_1 a_1 c_1 b_1 \cdot a_3 b_3 b_3 b_3 \cdot c_1 b_1 b_1 b_1 \\
        &= a_1^5 b_1^7 c_1^4 \times a_3^5 b_3^5 c_3^2 = 2^{-3-1/3} q^{-36}.
    \end{align*}
    Thus, $\mathcal{C}_\mathbf{k} \omega_{\mathcal{M}_{(1,2,3,4)}}(x) = 2^{6+1/3}q^4 \times 2^{-3-1/3} q^{-36} = 2^{3}$ by the periodicity~$q^{16} = 1$.
    Hence, we have
    \begin{equation*}
        \mathcal{C}_\mathbf{k} \omega_{\mathcal{M}_{(1,2,3,4)}}(x) = 2^3 = 2^{\ic(x)},
    \end{equation*}
    which is consistent with \cref{theo:inv-crossing}.
\end{exam}

\begin{figure}[bht]
    \centering
    \begin{minipage}{.48\textwidth}
        \centering
        \begin{tikzpicture}[scale=0.7]
            \draw (1,1) -- (4,1);
            \draw (1,2) -- (4,2);
            \draw (1,3) -- (4,3);
            \draw (1,4) -- (4,4);
            \draw (1,5) -- (4,5);
            \draw (1,6) -- (4,6);
            \draw (1,7) -- (4,7);
            \draw (1,8) -- (4,8);
            \draw (1,9) -- (4,9);
            \draw (1,10) -- (4,10);
            \draw (1,11) -- (4,11);

            \draw (1,1) -- (1,11);
            \draw (2,1) -- (2,11);
            \draw (3,1) -- (3,11);
            \draw (4,1) -- (4,11);

            \draw[color=wongblue, ultra thick] (0.5,3)--(1,3)--(1,0.5);
            \draw[color=wongblue, ultra thick] (0.5,5)--(2,5)--(2,0.5);
            \draw[color=wongblue, ultra thick] (0.5,7)--(3,7)--(3,0.5);
            \draw[color=wongblue, ultra thick] (0.5,11)--(4,11)--(4,0.5);
        \end{tikzpicture}
        \caption{The base configuration $x_{\max}$ for $\mathbf{k}=(2,3,4,6)$.} \label{fig:b-max}
    \end{minipage}
    \begin{minipage}{.48\textwidth}
        \centering
        \begin{tikzpicture}[scale=0.9]
            \draw (1,2) -- (4,2);
            \draw (1,3) -- (4,3);
            \draw (2,1) -- (2,4);
            \draw (3,1) -- (3,4);
            \draw [color=wongblue, ultra thick] (2,3) -- (3,3) -- (3,2);

            \draw[->] (4.5, 2.5) -- (5.5, 2.5);

            \draw (6,2) -- (9,2);
            \draw (6,3) -- (9,3);
            \draw (7,1) -- (7,4);
            \draw (8,1) -- (8,4);
            \draw [color=wongblue, ultra thick] (7,3) -- (7,2) -- (8,2);
        \end{tikzpicture}
        \caption{Flipping a corner.} \label{fig:turning-over-corner}
    \end{minipage}
\end{figure}

\begin{proof}[Proof of \cref{theo:inv-crossing}]
    Let $x_{\max}$ be the configuration in which the $i$-th path starts at $(1, 2k_i-1)$, moves straight to the right until reaching $(i, 2k_i-1)$, then goes straight downward, and ends at $(i, 1)$, for $i = 1, 2, \ldots, n$.
    We show an example of $x_{\max}$ in \cref{fig:b-max}.
    It is easy to see that any configuration on the domain~$\mathcal{M}_\mathbf{k}$ can be obtained from $x_{\max}$ by repeatedly flipping corners in the manner described in \cref{fig:turning-over-corner}.
    We show the theorem by induction on the number of times corners are flipped, starting from $x_{\max}$.

    First, assume $x = x_{\max}$.
    Then, it is straightforward to check $2^{\ic(x_{\max})} = \mathcal{C}_\mathbf{k} \times \omega_{\mathcal{M}_\mathbf{k}}(x_{\max}) = 2^{n(n-1)/2}$.
    Hence, \cref{eq:comb-desc} holds for $x_{\max}$.

    Next, suppose that \cref{eq:comb-desc} holds for a configuration~$x$ and that $x$ is transformed to $x'$ by flipping the corner around a square~$C$ in $x$.
    Then it suffices to show $2^{\ic(x')-\ic(x)} = \omega_{\mathcal{M}_\mathbf{k}}(x')/\omega_{\mathcal{M}_\mathbf{k}}(x)$.
    Assume that $C$ consists of four vertices~$(i, j), (i, j+1), (i+1, j), (i+1, j+1)$ as in the following figure. \begin{center}
        \begin{tikzpicture}[scale=0.8]
            \node at (0,2.5) {$C: $};
            \draw (1,2) -- (4,2);
            \draw (1,3) -- (4,3);
            \draw (2,1) -- (2,4);
            \draw (3,1) -- (3,4);
            \node at (4.5,2) {$j$};
            \node at (4.7,3) {$j+1$};
            \node at (2,0.5) {$i$};
            \node at (3,0.5) {$i+1$};
        \end{tikzpicture}
    \end{center}
    Furthermore, let
    \begin{align*}
        s &= -[\text{the vertex at $(i,j+1)$ is type-$6$}],
        \quad t = [\text{the vertex at $(i+1,j+1)$ is type-$5$}],\\
        u &= [\text{the vertex at $(i,j)$ is type-$5$}],
        \quad v = -[\text{the vertex at $(i+1,j)$ is type-$6$}].
    \end{align*}
    (In other words, $s, t, u, v$ are the matrix entries~$A_{2k_n-j-1, i},A_{2k_n-j-1,i+1},A_{2k_n-j,i},A_{2k_n-j,i+1}$ of the $\{0,\pm1\}$-matrix~$A$ corresponding to $x$ obtained as in \cref{rema:why-variant-inversion-number}.)
    Then, the following follows from a simple case-by-case analysis based on the possible local configurations on the corner~$C$:
    \begin{align}
        \mathcal{N}_{(1)}^{\even}(x')-\mathcal{N}_{(1)}^{\even}(x) &= \begin{cases}
            [u=1] = u & \text{if }i\ \text{is even}; \\
            -[t=0] = t-1 & \text{if } i\ \text{is odd};
        \end{cases} \label{eq:dc-diff} \\
        \mathcal{N}_{(3)}^{\odd}(x') - \mathcal{N}_{(3)}^{\odd}(x) &= \begin{cases}
            -[s=0]=-s-1 & \text{if } i\ \text{is even}; \\
            -[v=-1]=-v & \text{if } i\ \text{is odd},
        \end{cases} \label{eq:uc-diff}
    \end{align}
    where $[\cdot]$ denotes the Iverson bracket.
    Hence by \cref{eq:dc-diff,eq:uc-diff}, we obtain
    \begin{equation*}
        \ic(x')-\ic(x) = \begin{cases}
            -s+u-1 & \text{if } i\ \text{is even}; \\
            t-v-1 & \text{if } i\ \text{is odd}.
        \end{cases}
    \end{equation*}

    Next, we check $\omega_{\mathcal{M}_\mathbf{k}}(x')/\omega_{\mathcal{M}_\mathbf{k}}(x)$. For that, we look at the changes in the weights for each vertex of $(i, j), (i, j+1), (i+1, j), (i+1, j+1)$.

    For illustration, suppose that $j$ is even.
    If the local configuration around the vertex~$(i, j+1)$ in $x$ is \tikz[scale=0.3]{\draw (2,1)--(2,3); \draw (1,2)--(3,2); \draw[color=wongblue, very thick] (2,3)--(2,2)--(3,2);}, the local configuration around the same vertex in $x'$ is \tikz[scale=0.3]{\draw (2,1)--(2,3); \draw (1,2)--(3,2); \draw[color=wongblue, very thick] (2,3)--(2,1);}.
    Thus the weight at the vertex~$(i, j+1)$ changes from $c_1 = 2^{-1/3}$ to $b_1 = 2^{1/6}$.
    If the local configuration around the vertex~$(i, j+1)$ in $x$ is \tikz[scale=0.3]{\draw (2,1)--(2,3); \draw (1,2)--(3,2); \draw[color=wongblue, very thick] (1,2)--(3,2);}, the local configuration around the same vertex in $x'$ is \tikz[scale=0.3]{\draw (2,1)--(2,3); \draw (1,2)--(3,2); \draw[color=wongblue, very thick] (1,2)--(2,2)--(2,1);}.
    Thus the weight at the vertex~$(i, j+1)$ changes from $b_1 = 2^{1/6}$ to $c_1 = 2^{-1/3}$.
    In both cases, if the weight at the vertex~$(i, j+1)$ in $x$ is $2^\alpha$, it changes to $2^{-1/6-\alpha}$ after flipping the corner.

    In a similar manner, we can track the changes in the weights at all four vertices, both when $j$ is even and when $j$ is odd.
    We summarize all cases in \cref{tab:changes-of-weights}.
    Denote by $\alpha, \beta, \gamma, \delta$ the exponents of $2$ in the weights at the vertices~$(i, j+1), (i+1, j+1), (i, j), (i+1, j)$, respectively.
    From \cref{tab:changes-of-weights}, it is easy to check that by flipping the corner, these exponents change in the following way:
    \begin{itemize}
        \item at $(i, j+1)$, $\alpha \mapsto -1/6-\alpha$ if $j$ is even, and $\alpha \mapsto \alpha$ if $j$ is odd;
        \item at $(i+1, j+1)$, $\beta \mapsto \beta$ if $j$ is even, and $\beta \mapsto -1/6-\beta$ if $j$ is odd;
        \item at $(i, j)$, $\gamma \mapsto -1/6-\gamma$ if $j$ is even, and $\gamma \mapsto \gamma$ if $j$ is odd;
        \item at $(i+1, j)$, $\delta \mapsto \delta$ if $j$ is even, and $\delta \mapsto -1/6-\delta$ if $j$ is odd.
    \end{itemize}

    Finally, we calculate the ratio~$\omega_{\mathcal{M}_\mathbf{k}}(x')/\omega_{\mathcal{M}_\mathbf{k}}(x)$.
    We consider two cases depending on the parity of $j$.
    First, assume $j$ is even.
    Then, we have
    \begin{equation}
        \frac{\omega_{\mathcal{M}_\mathbf{k}}(x')}{\omega_{\mathcal{M}_\mathbf{k}}(x)} = \frac{2^{-1/3-\alpha-\gamma}}{2^{\alpha+\gamma}} = 2^{-2\alpha-2\gamma-1/3}. \label{eq:omega-even}
    \end{equation}
    Notice that $\alpha$ and $s$ are in the relation~$\alpha=1/6+s/2$ and that $\gamma$ and $u$ are in the relation~$\gamma=1/6-u/2$.
    These relations can be shown by a simple case-by-case based on the possible configurations at the vertices~$(i, j+1), (i, j)$.
    By substituting these relations into \cref{eq:omega-even}, we obtain
    \begin{equation*}
        \frac{\omega_{\mathcal{M}_\mathbf{k}}(x')}{\omega_{\mathcal{M}_\mathbf{k}}(x)} = 2^{-2(1/6+s/2)-2(1/6-u/2)-1/3} = 2^{-s+u-1} = 2^{\ic(x')-\ic(x)}.
    \end{equation*}
    Analogously, if $j$ is odd, we obtain
    \begin{equation*}
        \frac{\omega_{\mathcal{M}_\mathbf{k}}(x')}{\omega_{\mathcal{M}_\mathbf{k}}(x)} = 2^{-2(1/6-t/2)-2(1/6+v/2)-1/3} = 2^{t-v-1} = 2^{\ic(x')-\ic(x)}.
    \end{equation*}
    This completes the proof.
\end{proof}

\begin{table}[htb]
    \begin{tabular}{|c|c|c|c|}
        \hline
        & local configuration & changes if $j$ is even & changes if $j$ is odd \\
        \hline
        \multirow{2}{*}{$(i, j+1)$} & \tikz[scale=0.3]{\draw (2,1)--(2,3); \draw (1,2)--(3,2); \draw[color=wongblue, very thick] (2,3)--(2,2)--(3,2);} $\mapsto$ \tikz[scale=0.3]{\draw (2,1)--(2,3); \draw (1,2)--(3,2); \draw[color=wongblue, very thick] (2,3)--(2,1);} & $c_1=2^{-1/3} \mapsto b_1 = 2^{1/6}$ & $c_3=q^{-3}2^{-1/3} \mapsto b_3=q^{-3}2^{-1/3}$ \\
        & \tikz[scale=0.3]{\draw (2,1)--(2,3); \draw (1,2)--(3,2); \draw[color=wongblue, very thick] (1,2)--(3,2);} $\mapsto$ \tikz[scale=0.3]{\draw (2,1)--(2,3); \draw (1,2)--(3,2); \draw[color=wongblue, very thick] (1,2)--(2,2)--(2,1);} & $b_1 = 2^{1/6} \mapsto c_1=2^{-1/3}$ & $b_3=q^{-3}2^{-1/3} \mapsto c_3=q^{-3}2^{-1/3}$ \\
        \hline
        \multirow{2}{*}{$(i+1, j+1)$} & \tikz[scale=0.3]{\draw (2,1)--(2,3); \draw (1,2)--(3,2); \draw[color=wongblue, very thick] (1,2)--(2,2)--(2,1); \draw[color=wongblue, very thick] (2,3)--(2,2)--(3,2);} $\mapsto$ \tikz[scale=0.3]{\draw (2,1)--(2,3); \draw (1,2)--(3,2); \draw[color=wongblue, very thick] (2,3)--(2,2)--(3,2);} & $a_1=2^{-1/3} \mapsto c_1 =2^{-1/3}$ & $a_3=q^{-3}2^{1/6} \mapsto c_3=q^{-3}2^{-1/3}$ \\
        & \tikz[scale=0.3]{\draw (2,1)--(2,3); \draw (1,2)--(3,2); \draw[color=wongblue, very thick] (1,2)--(2,2)--(2,1); } $\mapsto$ \tikz[scale=0.3]{\draw (2,1)--(2,3); \draw (1,2)--(3,2);} & $c_1=2^{-1/3} \mapsto a_1 =2^{-1/3}$ & $c_3=q^{-3}2^{-1/3} \mapsto a_3=q^{-3}2^{1/6}$ \\
        \hline
        \multirow{2}{*}{$(i, j)$} & \tikz[scale=0.3]{\draw (2,1)--(2,3); \draw (1,2)--(3,2);} $\mapsto$ \tikz[scale=0.3]{\draw (2,1)--(2,3); \draw (1,2)--(3,2); \draw[color=wongblue, very thick] (2,3)--(2,2)--(3,2);} & $a_3=q^{-3} 2^{1/6} \mapsto c_3 =q^{-3} 2^{-1/3}$ & $a_1=2^{-1/3} \mapsto c_1=2^{-1/3}$ \\
        & \tikz[scale=0.3]{\draw (2,1)--(2,3); \draw (1,2)--(3,2); \draw[color=wongblue, very thick] (1,2)--(2,2)--(2,1);} $\mapsto$ \tikz[scale=0.3]{\draw (2,1)--(2,3); \draw (1,2)--(3,2); \draw[color=wongblue, very thick] (1,2)--(2,2)--(2,1);\draw[color=wongblue, very thick] (2,3)--(2,2)--(3,2);} & $c_3=q^{-3} 2^{-1/3} \mapsto a_3 =q^{-3} 2^{1/6}$ & $c_1=2^{-1/3} \mapsto a_1=2^{-1/3}$ \\
        \hline
        \multirow{2}{*}{$(i+1, j)$} & \tikz[scale=0.3]{\draw (2,1)--(2,3); \draw (1,2)--(3,2);\draw[color=wongblue, very thick] (2,3)--(2,2)--(3,2);} $\mapsto$ \tikz[scale=0.3]{\draw (2,1)--(2,3); \draw (1,2)--(3,2); \draw[color=wongblue, very thick] (1,2)--(2,2)--(3,2);} & $c_3=q^{-3} 2^{-1/3} \mapsto b_3 =q^{-3} 2^{-1/3}$ & $c_1=2^{-1/3} \mapsto b_1=2^{1/6}$ \\
        & \tikz[scale=0.3]{\draw (2,1)--(2,3); \draw (1,2)--(3,2);\draw[color=wongblue, very thick] (2,1)--(2,2)--(2,3);} $\mapsto$ \tikz[scale=0.3]{\draw (2,1)--(2,3); \draw (1,2)--(3,2); \draw[color=wongblue, very thick] (2,1)--(2,2)--(1,2);} & $b_3=q^{-3} 2^{-1/3} \mapsto c_3 =q^{-3} 2^{-1/3}$ & $b_1=2^{1/6} \mapsto c_1=2^{-1/3}$ \\
        \hline
    \end{tabular}
    \caption{All possible changes of the weight at a vertex.} \label{tab:changes-of-weights}
\end{table}

\section{A surjective map from mixed \texorpdfstring{$6$}{6}V configurations to triple-free GT patterns} \label{sec:map-mixed-6V-to-amtri}
In this section, we construct a surjective map
\[
\psi : \{\text{mixed $6$V configurations on $\mathcal{M}_{\mathbf{k}}$}\} \longrightarrow \{\text{triple-free GT patterns with bottom row~$\mathbf{k}$}\}.
\]
We then show that under this map, the weight of a triple-free GT pattern is equal to the total weight of all configurations mapped to it.
This construction provides the final combinatorial link in our chain of probabilistic bijections, connecting the weighted enumeration of mixed $6$V configurations to that of triple-free GT patterns.

The reader may find it helpful to compare the following definition of $\psi$ with the example shown in \cref{fig:example-psi}.
\begin{defi} \label{defi:psi}
    Let $\mathbf{k} = (k_1, k_2, \ldots, k_n)$ be a strictly increasing sequence of positive integers.
    We define a map~$\psi$ from the set of mixed $6$V configurations on the domain~$\mathcal{M}_\mathbf{k}$ to the set of triple-free GT patterns with bottom row~$\mathbf{k}$ as the composition of the two maps~$\psi_1,\psi_2$ defined as follows.

    Step 1. The map~$\psi_1$ assigns to each mixed $6$V configuration~$x$ a monotone triangle~$T=(T_{i,j})_{1 \leq i \leq n, 1 \leq j \leq i}$, that is, a triangular array with strictly increasing rows satisfying the interlacing conditions~$T_{i+1,j} \leq T_{i,j} \leq T_{i+1,j+1}$.
    Before defining the entries of $T$, we relabel the row positions according to
    \[
        2y-1 \mapsto y \quad (1 \leq y \leq k_n),
        \qquad
        2y \mapsto \overline{y} \quad (1 \leq y \leq k_n-1),
    \]
    and use the order $1 < \overline{1} < 2 < \overline{2} < \cdots < k_n-1 < \overline{k_n-1} < k_n$.
    The $i$-th row of $T$ is then obtained by recording, after this relabelling, the rows in which the lattice paths enter column~$n-i+1$ from the left.

    Step 2. The map~$\psi_2$ assigns to each monotone triangle array~$T$ obtained by $\psi_1$ a triple-free GT pattern~$T'$ obtained by removing all bars.
    More explicitly, for $T=(T_{i,j})_{1 \leq i \leq n, 1 \leq j \leq i}$, define $T'=(T'_{i, j})_{1 \leq i \leq n, 1 \leq j \leq i}$ by
    \begin{equation*}
        T'_{i, j} = \begin{cases}
            y & \text{if}\quad T_{i,j} = y \quad \text{for some $y \in \{1, 2, \ldots, k_n\}$}; \\
            y & \text{if}\quad T_{i,j} = \overline{y} \quad  \text{for some $y \in \{1, 2, \ldots, k_n\}$}.
        \end{cases}
    \end{equation*}

    Then, $\psi$ is defined by $\psi = \psi_2 \circ \psi_1$.
\end{defi}

Note that the composition~$\psi_1$ gives a bijection from the set of mixed $6$V configurations on the domain~$\mathcal{M}_\mathbf{k}$ to the set of monotone triangles with entries in $\{1 < \overline{1} < 2 < \overline{2} < \cdots < k_n-1 < \overline{k_n-1} < k_n\}$ and with bottom row~$k$, while $\psi_2$ may map two distinct monotone triangles to the same triple-free GT pattern.

\begin{figure}[htb]
    \centering
    $x = $
    \scalebox{0.8}{
    \begin{tikzpicture}[scale=0.5,baseline=(current bounding box.center)]
        \draw (1,1) -- (6,1);
        \draw (1,2) -- (6,2);
        \draw (1,3) -- (6,3);
        \draw (1,4) -- (6,4);
        \draw (1,5) -- (6,5);
        \draw (1,6) -- (6,6);
        \draw (1,7) -- (6,7);
        \draw (1,8) -- (6,8);
        \draw (1,9) -- (6,9);
        \draw (1,10) -- (6,10);
        \draw (1,11) -- (6,11);
        \draw (1,12) -- (6,12);
        \draw (1,13) -- (6,13);
        \draw (1,14) -- (6,14);
        \draw (1,15) -- (6,15);

        \draw (1,1) -- (1,15);
        \draw (2,1) -- (2,15);
        \draw (3,1) -- (3,15);
        \draw (4,1) -- (4,15);
        \draw (5,1) -- (5,15);
        \draw (6,1) -- (6,15);

        \draw[color=wongblue, ultra thick] (0.5,3) -- (1,3) -- (1,0.5);
        \draw[color=wongblue, ultra thick] (0.5,5) -- (1,5) -- (1,4) -- (2,4) -- (2,0.5);
        \draw[color=wongblue, ultra thick] (0.5,9) -- (1,9) -- (1,8) -- (2,8) -- (2,7) -- (3,7) -- (3,0.5);
        \draw[color=wongblue, ultra thick] (0.5,11)--(1,11)--(1,10)--(2,10)--(2,8)--(3,8)--(3,7)--(4,7)--(4,0.5);
        \draw[color=wongblue, ultra thick] (0.5,13)--(3,13)--(3,10)--(4,10)--(4,8)--(5,8)--(5,0.5);
        \draw[color=wongblue, ultra thick] (0.5,15)--(2,15)--(2,14)--(3,14)--(3,13)--(4,13)--(4,12)--(5,12)--(5,9)--(6,9)--(6,0.5);

        \node at (7, 16) {row};
        \foreach \y in {1,...,7}{
            \node at (7, \y+\y) {$\overline{\the\numexpr\y}$};
        }
        \foreach \y in {1,...,8}{
            \node at (7, \y+\y-1) {$\the\numexpr\y$};
        }
    \end{tikzpicture}
    $\overset{\psi_1}{\mapsto}$
    $T = \begin{array}{ccccccccccc}
    & & & & & 5 & & & & & \\
    & & & & \overline{4} & & \overline{6} & & & & \\
    & & & 4 & & \overline{5} & & 7 & & & \\
    & & 4 & & \overline{4} & & 7 & & \overline{7} & & \\
      & \overline{2} & & \overline{4} & & \overline{5} & & 7 & & 8 & \\
    2 &   & 3 &   & 5 &   & 6 &   & 7 &   & 8
    \end{array}$
    $\overset{\psi_2}{\mapsto}$
    $T' = \begin{array}{ccccccccccc}
    & & & & & 5 & & & & & \\
    & & & & 4 & & 6 & & & & \\
    & & & 4 & & 5 & & 7 & & & \\
    & & 4 & & 4 & & 7 & & 7 & & \\
      & 2 & & 4 & & 5 & & 7 & & 8 & \\
    2 &   & 3 &   & 5 &   & 6 &   & 7 &   & 8
    \end{array}$
    }
    \caption{An example of the map~$\psi$ with $\mathbf{k} = (2,3,5,7,8)$.} \label{fig:example-psi}
\end{figure}

The goal of this section is to show the following theorem.

\begin{theo}\label{theo:psi-is-correct-map}
    Let $\mathbf{k} = (k_1, k_2, \ldots, k_n)$ be a strictly increasing sequence of positive integers.
    For every triple-free GT pattern~$T'$ with bottom row~$k$, the following holds:
    \begin{equation*}
        \sum_x 2^{\ic(x)} = 2^{-n} \omega_{\FSA}(T'),
    \end{equation*}
    where $x$ runs over all mixed $6$V configurations on $\mathcal{M}_\mathbf{k}$ such that $\psi(x)=T'$.
\end{theo}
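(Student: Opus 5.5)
The plan is to push everything through the bijection $\psi_1$. First, rewrite $\ic(x)$ as a statistic on the monotone triangle $T=\psi_1(x)$. Then show that summing $2^{\ic}$ over the fibre $\psi_2^{-1}(T')$ factorizes entry by entry.

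\textbf{Step 1: $\ic$ as a column statistic of $T$.}
Fix a column $c=n-i+1$ of $\mathcal{M}_\mathbf{k}$. The paths entering column $c$ from the left sit at the rows recorded in row $i$ of $T$. The paths leaving column $c$ to the right sit at the rows recorded in row $i-1$ (for $c=n$ there are none). The vertical edges used in column $c$ are then forced by the ice rule. Going from top to bottom, a vertical path segment is present exactly on the intervals between consecutive entries of the merged, interlacing sequence $T_{i,1}\le T_{i-1,1}\le T_{i,2}\le\cdots$. So the type of every vertex in column $c$ is a function of rows $i$ and $i-1$ of $T$.

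The two kinds of vertex that $\ic$ counts are then the following.
\begin{itemize}
\item A type-$3$ vertex in an odd row $y$ is a path passing straight through horizontally. This happens exactly when the unbarred value $y$ occurs in both rows $i$ and $i-1$ in the same slot, i.e.\ $T_{i,j}=T_{i-1,j}=y$ or $T_{i,j+1}=T_{i-1,j}=y$, with the vertical direction empty.
\item A type-$1$ vertex in an even row $\overline{y}$ is a horizontal pass-through at $\overline{y}$ while a vertical path is also present there.
\end{itemize}
I would write both as sums of local indicators. Each indicator involves an entry $T_{i-1,j}$ together with its lower neighbours $T_{i,j},T_{i,j+1}$, plus the parity information of whether entries lie inside the vertical-occupancy intervals. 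The expected result is
\[
\ic(x)=\sum_{(i,j)} \epsilon\bigl(T_{i,j};T_{i+1,j},T_{i+1,j+1}\bigr)
\]
for an explicit local function $\epsilon$. An independent check is \cref{exam:inv-crossing} together with the value $\ic(x_{\max})=n(n-1)/2$ from the proof of \cref{theo:inv-crossing}.

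\textbf{Step 2: describe the fibre.}
The fibre of $\psi_2$ over $T'$ consists of the ways to put bars on the non-bottom entries of $T'$ so that the result is a monotone triangle in the alphabet $1<\overline1<2<\cdots$. Bottom-row entries are never barred, since they are the fixed $\mathbf{k}$. Strictness along rows in the barred alphabet requires the following.
\begin{itemize}
\item Whenever $T'_{i,j}=T'_{i,j+1}$, exactly the left one is unbarred and the right one is barred.
\item This is possible precisely because three equal entries in a row do not occur, which is where triple-freeness enters.
\end{itemize}
Interlacing $T_{i+1,j}\le T_{i,j}\le T_{i+1,j+1}$ imposes further constraints only when an entry equals one of its lower neighbours. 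The choices therefore form a system of local constraints between adjacent rows.

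\textbf{Step 3: factorize the weighted count.}
Process the rows from the bottom upward and sum over the bar choices of one row at a time, conditioning on the row below. The goal is a local lemma: given the lower row, summing $2^{\epsilon}$ over the admissible bar choices of each entry (or of each forced pair of equal entries) gives $2$ if the entry is not equal to both of its neighbours in the adjacent row, and $1$ otherwise. The neighbour orientation must be matched to the paper's convention for ``upper-left/upper-right'' in $\omega_{\FSA}$.

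Taking the product then gives $2^{r}$ up to a global power of $2$. That power should be exactly $2^{-n}$, coming from the $n$ bottom-row entries: they are counted in $\omega_{\FSA}$ but carry no bar choice. This normalization can be pinned down on the base case $T'=\psi(x_{\max})$, where both sides can be computed directly.

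\textbf{Main obstacle.}
I expect the main difficulty to be the local lemma in Step 3. The bar choices of horizontally adjacent equal entries are coupled, and the contribution of a type-$1$ vertex at an even row depends on vertical occupancy, which involves both neighbours at once. So the sum may not factor naively per entry.

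My first attempt would be to group the entries of each row into maximal blocks of equal values, which have size $1$ or $2$ by triple-freeness, and verify the identity block by block through a finite case analysis on how a block interlaces with the row below: equal to the left neighbour, equal to the right neighbour, equal to both, or strictly between. If the per-row conditioning fails, the fallback is to transfer the identity along corner flips, as in the proof of \cref{theo:inv-crossing}, by tracking how both sides change.
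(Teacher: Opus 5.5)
Your strategy works, and Step~3 is a genuinely different and shorter route than the paper's. However, the local lemma you aim for is false as stated and has to be replaced.

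\textbf{Step 1.} This is the paper's \cref{prop:ic-in-alpha-and-beta}, and it is simpler than you fear. A horizontal pass-through corresponds to an entry $U=T_{i,j}$ equal to one of its lower neighbours $L=T_{i+1,j}$ or $R=T_{i+1,j+1}$. It is never equal to both, since $L<R$ in the barred alphabet. The vertical edges at that vertex are occupied exactly when $U=L$. Hence $\epsilon(U;L,R)=[U=L\in\{\overline1,\overline2,\dots\}]+[U=R\in\{1,2,\dots\}]$, with no further occupancy data.

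\textbf{Step 3: the lemma fails.} No choice of neighbour orientation rescues your per-entry lemma. Suppose the lower neighbours of an unpaired entry $U$ form an equal pair. They are then forced to be $t,\overline t$, both $U=t$ and $U=\overline t$ are admissible, and both have $\epsilon=0$. So $U$ contributes $2$, not $1$.

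\textbf{Step 3: the correct lemma.} The right statement is per block of equal values in a row. Fix \emph{any} admissible barring of row $i+1$. Entries of row $i$ with distinct values are automatically strictly ordered, and a pair $t\;t$ is forced to $t\;\overline t$, so the conditional sum factors over blocks.
A singleton with lower values $l\le t\le r$ contributes $2$ in each of four cases:
\begin{enumerate}
\item $l<t<r$: two choices, each of weight $1$;
\item $l=t<r$: if $L=t$, two choices of weight $1$; if $L=\overline t$, then $U=\overline t$ is forced and has weight $2$;
\item $l<t=r$: symmetric to the previous case;
\item $l=t=r$: two choices, each of weight $1$.
\end{enumerate}
A pair also contributes exactly $2$. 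Its middle lower neighbour $M\in\{t,\overline t\}$ matches exactly one of $t,\overline t$ (via $\beta$ or via $\overline\alpha$), and the outer neighbours match neither.

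So the row-$i$ sum is $2^{\,i-p_i}$, where $p_i$ is the number of equal pairs in row $i$ of $T'$, independently of the row below. The fibre sum is therefore $2^{n(n-1)/2-\sum_i p_i}$. An entry equals both its upper neighbours exactly when those neighbours form an equal pair, so $\omega_{\FSA}(T')=2^{n(n+1)/2-\sum_i p_i}$, which gives the factor $2^{-n}$. The factor $2$ is lost at the pair itself, not at an entry next to it; your lemma only gets the global count right.

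\textbf{Comparison with the paper.} With this correction your proof is complete. The paper takes a different path:
\begin{enumerate}
\item it decomposes $T'$ into maximal diagonally connected blocks;
\item it splits each block into zigzag parts and joint rows;
\item it shows joint rows contribute a fixed factor (this is your pair case together with the case $l=t=r$);
\item it proves $\sum_{I\in\mathcal J(F)}2^{\ic(I)}=2^{\#F}$ for fences by induction.
\end{enumerate}
In your version, conditioning on the row below makes both the admissibility constraints and $\epsilon$ local to an entry and its two lower neighbours. Every block then has constant conditional weight $2$, which is a transfer argument with constant row sums. You therefore need neither blocks nor fences, nor the corner-flip fallback. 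The paper's route has the advantage of isolating the fence identity as a standalone statement, which underlies its remark on $I_F(q)$.
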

By \cref{lemm:proper-weight-distribution-is-prob-bij}, this theorem immediately implies the following probabilistic bijection between mixed $6$V configurations and triple-free GT patterns.
\begin{coro} \label{coro:mixed6V-AGTP-prob-bij}
    Let $\mathbf{k} = (k_1, k_2, \ldots, k_n)$ be a strictly increasing sequence of positive integers.
    There exists a probabilistic bijection between the set of mixed $6$V configurations on the domain~$\mathcal{M}_\mathbf{k}$ with the weight function~$2^{\ic(\cdot)}$ and the set of triple-free GT patterns with bottom row~$\mathbf{k}$ with the weight function~$2^{-n} \omega_{\FSA}(\cdot)$.
\end{coro}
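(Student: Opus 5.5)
The statement to prove is \cref{coro:mixed6V-AGTP-prob-bij}. I will apply \cref{lemm:proper-weight-distribution-is-prob-bij} directly, with the following choices:
\begin{itemize}
    \item $X$ is the set of mixed $6$V configurations on $\mathcal{M}_\mathbf{k}$, with weight $\omega_X(x)=2^{\ic(x)}$;
    \item $Y$ is the set of triple-free GT patterns with bottom row~$\mathbf{k}$, with weight $\omega_Y(T')=2^{-n}\omega_{\FSA}(T')$;
    \item $f=\psi$ is the map of \cref{defi:psi};
    \item the field is $A=\R$ (or $\CC$, to match the earlier propositions).
\end{itemize}

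\textbf{Hypotheses of the lemma.}
\begin{enumerate}
    \item $\psi$ must actually land in $Y$. This is part of \cref{defi:psi}, where $\psi$ is defined as a map into triple-free GT patterns with bottom row~$\mathbf{k}$, so I take it as given.
    \item $\omega_Y$ must take values in $A\setminus\{0\}$. This is immediate, since $\omega_{\FSA}(T')=2^r$ for some $r\ge 0$, and so $2^{-n}\omega_{\FSA}(T')=2^{r-n}\neq 0$.
    \item The fibre condition $\sum_{x\in\psi^{-1}(T')}\omega_X(x)=\omega_Y(T')$ must hold for every $T'\in Y$. This is exactly \cref{theo:psi-is-correct-map}.
\end{enumerate}

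The fibre condition is needed even when the fibre $\psi^{-1}(T')$ is empty. In that case the left-hand side would be $0$ while the right-hand side is nonzero, which would contradict the theorem. So \cref{theo:psi-is-correct-map} also guarantees that $\psi$ is surjective, consistent with the section's description, and no separate surjectivity argument is needed.

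With all three hypotheses in place, \cref{lemm:proper-weight-distribution-is-prob-bij} gives the explicit pair
\[
\mathcal{P}(x,T')=[\psi(x)=T'], \qquad \overline{\mathcal{P}}(x,T')=\frac{2^{\ic(x)}}{2^{-n}\omega_{\FSA}(T')}\,[\psi(x)=T'],
\]
and this pair is the desired probabilistic bijection.

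All of the real content sits in \cref{theo:psi-is-correct-map}, which we are allowed to assume here. The corollary itself has no substantive obstacle: the only step needing a moment's care is that $\omega_Y$ is nonvanishing, and that was settled above.
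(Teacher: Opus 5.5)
Your proposal is correct and is the paper's argument: the corollary comes from applying \cref{lemm:proper-weight-distribution-is-prob-bij} to $f=\psi$, with the fibre condition supplied by \cref{theo:psi-is-correct-map}. Your additional checks are accurate details the paper leaves implicit, namely that $2^{-n}\omega_{\FSA}$ never vanishes and that the fibre identity itself forces $\psi$ to be surjective.
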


The proof of \cref{theo:psi-is-correct-map} proceeds in two steps:
\begin{enumerate}
    \item Express the variant inversion number in terms of monotone triangles~$T$ (\cref{prop:ic-in-alpha-and-beta}).
    \item Compute the total weight of all monotone triangles~$T$ mapping to a given triple-free GT pattern~$T'$ (\cref{prop:omega-in-ic}).
\end{enumerate}

\subsection{From mixed \texorpdfstring{$6$}{6}V configurations to monotone triangles}
In the rest of the paper, $\#\left\{\begin{array}{cc} & t \\ t & \end{array}\ \text{in}\ T\right\}$ denotes the number of pairs~$(i, j)$ such that $T_{i, j} = T_{i+1, j}$.
The other similar notations are interpreted in the same manner as well.

\begin{prop} \label{prop:ic-in-alpha-and-beta}
    Let $x$ be a mixed $6$V configuration on the domain~$\mathcal{M}_\mathbf{k}$, and let $T = \psi_1(x)$.
    Let
    \begin{align*}
        \overline{\alpha}(T) &= \#\Set{(i, j)}{1 \leq j \leq i \leq n-1, T_{i,j} \in \{1, 2, \ldots, k_n\}, T_{i, j}=T_{i+1,j}} =  \#\left\{\begin{array}{cc} & \overline{t} \\ \overline{t} & \end{array}\ \text{in}\ T \right\}, \\
        \beta(T) &= \#\Set{(i, j)}{1 \leq j \leq i \leq n-1, T_{i,j} \in \{\overline{1}, \overline{2}, \ldots, \overline{k_n-1}\}, T_{i, j}=T_{i+1,j+1}} = \#\left\{\begin{array}{cc}t & \\ & t \end{array}\ \text{in}\ T\right\}.
    \end{align*}
    Then, we have
    \begin{equation}
        \mathcal{N}_{(1)}^{\even} = \overline{\alpha}(T) \quad\text{and}\quad \mathcal{N}_{(3)}^{\odd}(x) = \beta(T),
    \end{equation}
    and thus, we obtain
    \begin{equation}
        \ic(x) = \overline{\alpha}(T) + \beta(T) = \#\left\{\begin{array}{cc} & \overline{t} \\ \overline{t} & \end{array}\ \text{in}\ T \right\} + \#\left\{\begin{array}{cc}t & \\ & t \end{array}\ \text{in}\ T\right\}. \label{eq:ic-in-alpha-and-beta}
    \end{equation}
\end{prop}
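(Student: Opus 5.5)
The plan is to read both statistics directly off $T=\psi_1(x)$ by a vertex-by-vertex analysis of $x$. First I fix the dictionary between $x$ and $T$. Column $c=n-i+1$ of $\mathcal{M}_\mathbf{k}$ corresponds to row $i$ of $T$. For a lattice row $y$, labelled $y$ or $\overline{y}$ after the relabelling of \cref{defi:psi}, the horizontal edge entering the vertex $(c,y)$ from the left is occupied by a path if and only if the label of $y$ appears in row $i$ of $T$. Likewise, the horizontal edge leaving $(c,y)$ to the right is occupied if and only if the label appears in row $i-1$ of $T$. For $c=n$ the right edge is an east boundary edge, which is never used, consistent with row $0$ being empty. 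Since type-$1$ and type-$3$ vertices are exactly those whose left and right horizontal edges are both occupied, they correspond bijectively to values $t$ that occur both in row $i$ and in row $i-1$ of $T$. A type-$1$ vertex additionally has both vertical edges occupied, while a type-$3$ vertex has neither.

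Second, I determine the vertical edges by flow conservation. Paths only move right or down. The number of paths crossing the vertical edge of column $c$ just below height $y$ therefore equals the number of paths entering column $c$ at heights $\geq y$ minus the number leaving it to the right at heights $\geq y$. This gives
\[
\#\{\ell : T_{i,\ell}\geq t\}-\#\{\ell : T_{i-1,\ell}\geq t\}\in\{0,1\},
\]
where $t$ is the label of $y$. The value lies in $\{0,1\}$ because of osculation; equivalently, this is just interlacing.

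Now suppose $t=T_{i,j}$ is also an entry of row $i-1$. By interlacing, $t=T_{i-1,j}$ or $t=T_{i-1,j-1}$. The count above equals $(i-j+1)-(i-j)=1$ in the first case and $(i-j+1)-(i-j+1)=0$ in the second. Hence the edge below is occupied exactly when $t=T_{i-1,j}$, and by the ice rule the edge above is then occupied as well, giving a type-$1$ vertex. Otherwise $t=T_{i-1,j-1}$, neither vertical edge is occupied, and the vertex is of type $3$. After shifting $i\mapsto i+1$, type-$1$ vertices correspond to pairs $T_{i,j}=T_{i+1,j}$, and type-$3$ vertices to pairs $T_{i,j}=T_{i+1,j+1}$.

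Third, I impose parity. Even lattice rows carry barred labels and odd rows carry unbarred labels. So $\mathcal{N}^{\even}_{(1)}(x)$ counts the pairs $T_{i,j}=T_{i+1,j}$ with a barred common value, and $\mathcal{N}^{\odd}_{(3)}(x)$ counts the pairs $T_{i,j}=T_{i+1,j+1}$ with an unbarred common value. These are precisely the two pictorial counts $\overline{\alpha}(T)$ and $\beta(T)$; the displayed set-builder descriptions should be read with barred and unbarred values assigned in this way. Summing the two counts gives \eqref{eq:ic-in-alpha-and-beta}.

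The main obstacle is the bookkeeping rather than any new idea. I have to match the orientation of the triangle with the west-to-east order of columns (row $i$ of $T$ is column $n-i+1$, so ``row $i-1$'' lies to the right), and correctly identify which diagonal neighbour corresponds to ``vertical edge occupied''. I will sanity-check this identification against \cref{exam:inv-crossing} and \cref{fig:example-psi} before writing out the argument.
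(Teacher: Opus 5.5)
Your proposal is correct and takes essentially the same route as the paper: both identify the vertices with both horizontal edges occupied (types $1$ and $3$) with the patterns $T_{i,j}=T_{i+1,j}$ and $T_{i,j}=T_{i+1,j+1}$. You separate type $1$ from type $3$ by a flow count combined with interlacing, whereas the paper observes that the $(n-i+j-1)$-th and $(n-i+j)$-th paths, which are distinct, occupy the two horizontal edges. Your reading of the definitions is also the right one: the pictorial descriptions (barred values for $\overline{\alpha}$, unbarred for $\beta$) are what the proof and \cref{exam:inv-crossing} require, and the bar conditions in the set-builder descriptions are swapped by a typo.
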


\begin{proof}
    We only prove $\mathcal{N}_{(1)}^{\even} = \overline{\alpha}(T)$ since $\mathcal{N}_{(3)}^{\odd}(x) = \beta(T)$ follows from a similar argument.

    Assume that we have $T_{i, j}=T_{i+1,j} \eqqcolon \overline{t}$ for some $i, j$ and $t \in [k_n-1]$.
    In terms of non-intersecting lattice paths, this means that the ($n-i+j$)-th path enters column~$n-i+1$ at $y=2t$, and that the ($n-i+j-1$)-th path enters column~$n-i$ at $y=2t$.
    Thus, the local configuration at vertex~$(n-i, 2t)$ must be type-$1$.

    It is also straightforward to see that given a vertex whose local configuration is type~$1$, the corresponding two entries in $T$ form the pattern~$\begin{array}{cc} & \overline{t} \\ \overline{t} & \end{array}$.
    Hence, we obtain $\mathcal{N}_{(1)}^{\even} = \overline{\alpha}(T)$.
\end{proof}

\cref{prop:ic-in-alpha-and-beta} justifies the following definition.

\begin{defi}
    Let $T$ be a monotone triangle with entries in $\{1 < \overline{1} < 2 < \overline{2} < 3 < \overline{3} < \cdots\}$.
    We define $\ic(T)$ by $\ic(T) = \overline{\alpha}(T) + \beta(T) = \#\left\{\begin{array}{cc} & \overline{t} \\ \overline{t} & \end{array}\ \text{in}\ T \right\} + \#\left\{\begin{array}{cc}t & \\ & t \end{array}\ \text{in}\ T\right\}$.
\end{defi}

\begin{exam}
    Take the configuration~$x$ as in \cref{exam:inv-crossing}.
    Then $T=\psi_1(x)$ is $\begin{array}{ccccccc} & & & \overline{2} & & & \\
    & & 2 & & \overline{2} & & \\
    & 2 & & \overline{2} & & 4 & \\
    1 & & 2 & & 3 & & 4\end{array}$.
    Thus, $\overline{\alpha}(T) = 1, \beta(T) = 2$.
    Hence, $\ic(T) = \overline{\alpha}(T) + \beta(T) = 3 = \ic(x)$ as expected from \cref{prop:ic-in-alpha-and-beta}.
\end{exam}

\subsection{From monotone triangles to triple-free GT patterns}
Our goal in this subsection is to show the following proposition.

\begin{prop} \label{prop:omega-in-ic}
    Let $T'$ be a triple-free GT pattern with bottom row~$\mathbf{k} = (k_1, k_2, \ldots, k_n)$, strictly increasing.
    Then, we have
    \begin{equation}
        2^{-n} \omega_{\FSA}(T') = \sum_{T} 2^{\ic(T)}, \label{eq:FSA-and-ic}
    \end{equation}
    where $T$ runs over all the monotone triangles with entries in $\{1 < \overline{1} < 2 < \overline{2} < \cdots < k_n-1 < \overline{k_n-1} < k_n\}$ and with bottom row~$\mathbf{k}$ such that $\psi_2(T) = T'$.
\end{prop}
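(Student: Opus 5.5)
The plan is to compute the right-hand side of \eqref{eq:FSA-and-ic} row by row, from the bottom of $T'$ upward. Because $\ic(T)=\overline{\alpha}(T)+\beta(T)$ only counts coincidences between an entry $T_{i,j}$ and one of its lower neighbours $T_{i+1,j}$, $T_{i+1,j+1}$, the weight $2^{\ic(T)}$ factors as a product over consecutive row pairs $(i,i+1)$. This makes the sum a transfer-matrix product.

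First I would describe the fibre $\psi_2^{-1}(T')$ entry by entry. A lift $T$ is obtained by choosing, for each entry of $T'$ outside the bottom row, whether it is barred. The constraints are as follows.
\begin{itemize}
\item Since $T'$ is triple-free, equal entries in a row of $T'$ come in isolated pairs $T'_{i,j}=T'_{i,j+1}=y$. Strict increase in the order $1<\overline{1}<2<\cdots$ forces such a pair to lift to $(y,\overline{y})$, which requires $y\le k_n-1$.
\item An isolated entry with value $y$ may a priori be $y$ or $\overline{y}$ (only $y$ if $y=k_n$).
\item The interlacing $T_{i+1,j}\le T_{i,j}\le T_{i+1,j+1}$ in the barred order restricts these choices only where $T'_{i,j}$ equals one of its lower neighbours.
\end{itemize}
I would then prove the key claim: for a fixed admissible lift of row $i+1$, the weighted sum over the lifts of row $i$,
\[
\sum_{\text{lifts of row } i} 2^{\#\{\text{coincidences counted by } \overline{\alpha},\beta \text{ between rows } i,\,i+1\}},
\]
does not depend on the chosen lift of row $i+1$. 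Moreover, it equals $2^{f_i(T')}$, where $f_i(T')$ is the number of entries of row $i$ of $T'$ that are not the left member of a forced pair. Iterating from $i=n-1$ up to $i=1$, starting from the unique unbarred bottom row, then gives $\sum_T 2^{\ic(T)}=2^{\sum_i f_i(T')}$.

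To prove the claim I would do a local case analysis on an entry $a=T'_{i,j}=y$ and its lower neighbours $\ell=T'_{i+1,j}$, $r=T'_{i+1,j+1}$.
\begin{itemize}
\item If $\ell<y<r$, the entry is free and no coincidence can occur, so the factor is $1+1=2$.
\item If $\ell=y<r$ and $\ell$ is lifted to $y$, both choices are allowed and neither is counted by $\overline{\alpha}$, so the factor is $2$. If $\ell$ is lifted to $\overline{y}$, interlacing forces $a=\overline{y}$, which is counted by $\overline{\alpha}$, so the factor is again $2$. The case $\ell<y=r$ is symmetric, using $\beta$.
\item If $\ell=r=y$, the lower pair is $(y,\overline{y})$, and both choices of $a$ are allowed with no coincidence, so the factor is $2$.
\item For a forced pair $(y,\overline{y})$ in row $i$, I must check that the two forced entries together contribute exactly one factor $2$, again independently of the lift of row $i+1$. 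This is where the case analysis is tightest, since triple-freeness and interlacing severely limit the possible lower neighbours.
\end{itemize}

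Finally I would reconcile the exponent with $\omega_{\FSA}$. The claim to prove is that $\sum_{i\le n-1} f_i(T')$ equals $r-n$, where $r$ is the exponent in $\omega_{\FSA}(T')=2^r$. Equivalently, the number of left members of forced pairs in rows $1,\dots,n-1$ plus $n$ should equal the number of entries that coincide with both of their upper neighbours (note that entries of row $1$ have no upper neighbours). I would prove this by a double count. An entry equal to both upper neighbours forces those two neighbours to form an equal pair in the row above; conversely, each equal pair in row $i$ has, by interlacing, exactly one candidate entry below it. The bottom row, whose pairs are absent because $\mathbf{k}$ is strictly increasing, accounts for the shift by $n$.

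I expect the main obstacle to be this last bookkeeping step together with the forced-pair cases of the local lemma. If the exact count does not match, I would first re-examine whether a forced pair contributes $2$ or $1$ relative to the entry lying between its two lower neighbours, and adjust the definition of $f_i$ accordingly before redoing the double count.
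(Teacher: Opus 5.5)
Your proof is correct, and it takes a genuinely different route from the paper.

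\textbf{The paper's route.} The paper cuts $T'$ into maximal connected blocks of equal value and shows that $\ic$ and $\omega_{\FSA}$ both factor over blocks. It then splits each block into zigzag parts and joint parts (the forced pairs $(t,\overline{t})$). Finally it proves the zigzag identity $\sum_Z 2^{\ic(Z)}=2^{\#Z'}$ by translating lifts into order ideals of a fence and inducting on the fence size.

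\textbf{Your route.} You slice horizontally and prove a conditional statement: for \emph{every} lift of row $i+1$, each isolated entry of row $i$ has local sum exactly $2$, and each forced pair also has local sum exactly $2$. The total sum therefore collapses to $2^{\sum_{i<n}f_i(T')}$.

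\textbf{The pair case you left open.} It does work out. For a pair $(y,y)$ at positions $j,j+1$ one has $T'_{i+1,j+1}=y$. There are three situations, and each produces exactly one coincidence.
\begin{enumerate}
\item If $T'_{i+1,j}<y<T'_{i+1,j+2}$, the coincidence is $\beta$ at $(i,j)$ or $\overline{\alpha}$ at $(i,j+1)$, according to the lift of $T'_{i+1,j+1}$.
\item If $T'_{i+1,j}=y$, it is $\overline{\alpha}$ at $(i,j+1)$.
\item If $T'_{i+1,j+2}=y$, it is $\beta$ at $(i,j)$.
\end{enumerate}
The requirement $y\le k_n-1$ is automatic. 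Following the two lower-right diagonals from a pair $(k_n,k_n)$ down to row $n$ would produce two entries equal to $k_n$ in $\mathbf{k}$, which is impossible.

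\textbf{Comparison.} Your approach gives a shorter, purely local proof that needs no block or zigzag/joint bookkeeping. In particular it handles uniformly the blocks with two consecutive rows containing equal pairs, such as rows $(2,2)$ over $(2,2,3)$ over $(1,2,3,4)$. The paper's ``alternating'' description of blocks does not explicitly cover these. Your argument also contains the paper's fence lemma as a special case. In exchange, the paper's route isolates the fence identity as a standalone fact, and its remark on $I_F(q)$ builds on that.

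\textbf{One slip in your final step.} The identity $\sum_{i\le n-1}f_i(T')=r-n$ is equivalent to $P=E$, not to $P+n=E$. Here $P$ is the number of equal pairs and $E$ is the number of entries equal to both upper neighbours. Indeed $\sum_{i\le n-1}f_i(T')=(\#T'-n)-P$, while $r=\#T'-E$. The shift by $n$ comes only from excluding the $n$ bottom entries. The strict increase of $\mathbf{k}$ is used only to ensure that every pair has an entry directly beneath it. Your double count proves exactly $P=E$, so no adjustment of $f_i$ is needed.
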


\begin{exam}
    Let $T' = \begin{array}{ccccccc} & & & 2 & & & \\
    & & 2 & & 3 & & \\
    & 2 & & 3 & & 3 & \\
    1 & & 2 & & 3 & & 4\end{array}$.
    Let us verify that both sides of \cref{eq:FSA-and-ic} agree for this $T'$.
    The LHS is $2^{-4} \omega_{\FSA}(T') = 2^5$.
    The list of all $T$ from which $T'$ can be obtained by removing all bars is as follows:
    \begin{center}
        $\begin{array}{ccccccc} & & & 2 & & & \\
        & & 2 & & 3 & & \\
        & 2 & & 3 & & \overline{3} & \\
        1 & & 2 & & 3 & & 4\end{array}$,
        $\begin{array}{ccccccc} & & & \overline{2} & & & \\
        & & 2 & & 3 & & \\
        & 2 & & 3 & & \overline{3} & \\
        1 & & 2 & & 3 & & 4\end{array}$,
        $\begin{array}{ccccccc} & & & \overline{2} & & & \\
        & & \overline{2} & & 3 & & \\
        & 2 & & 3 & & \overline{3} & \\
        1 & & 2 & & 3 & & 4\end{array}$,

        \vspace{15pt}

        $\begin{array}{ccccccc} & & & 2 & & & \\
        & & 2 & & \overline{3}& & \\
        & 2 & & 3 & & \overline{3} & \\
        1 & & 2 & & 3 & & 4\end{array}$,
        $\begin{array}{ccccccc} & & & \overline{2} & & & \\
        & & 2 & & \overline{3} & & \\
        & 2 & & 3 & & \overline{3} & \\
        1 & & 2 & & 3 & & 4\end{array}$,
        $\begin{array}{ccccccc} & & & \overline{2}& & & \\
        & & \overline{2} & & \overline{3} & & \\
        & 2 & & 3 & & \overline{3} & \\
        1 & & 2 & & 3 & & 4\end{array}$.
    \end{center}
    The statistic $\ic(T)$ for each $T$, listed from top to bottom and left to right, is $2, 2, 3, 2, 2, 3$.
    Thus, the RHS is $2^2+2^2+2^3+2^2+2^2+2^3 = 2^5$.
    Hence, both sides of \cref{eq:FSA-and-ic} agree.
\end{exam}

We show \cref{prop:omega-in-ic} by decomposing GT patterns into smaller parts.

\begin{defi}
    Let $T = (T_{i,j})_{1 \leq i \leq n, 1 \leq j \leq i}$ be a triangular array with entries in $\{1 < \overline{1} < 2 < \overline{2} < 3 < \overline{3} < \cdots\}$.
    We call a subarray~$C \subseteq T$ a \myemph{connected block} of $T$ if it satisfies both of the following two conditions.
    \begin{enumerate}
        \item There exists an integer~$t$ such that every entry in $C$ equals $t$ or $\overline{t}$.
        \item Regard $C$ as the undirected graph where the vertices are all entries of $C$ and two entries are connected by an edge if and only if the two entries are diagonally adjacent in the triangular array. Then, $C$ is a connected graph.
    \end{enumerate}
    A connected block~$C$ is called \myemph{maximal} if there is no connected block that strictly contains $C$.
    Denote by $\mc(T)$ the set of all maximal connected blocks of $T$.

    For a connected block~$C$, we define $\overline{\alpha}(C)$ and $\beta(C)$ by the numbers of occurrences of the patterns~$\begin{array}{cc} & \overline{t} \\ \overline{t} & \end{array}$, $\begin{array}{cc}t & \\ & t \end{array}$ in $C$, respectively.
    Furthermore, define $\ic(C)$ by $\ic(C) = \overline{\alpha}(C) + \beta(C)$.
\end{defi}

\begin{figure}[htb]
    \centering
    \begin{tikzpicture}[scale=0.7]
        \node[outer sep=10pt] (0-0) at (0,0) {$2$};
        \node[outer sep=10pt] (2-0) at (2,0) {$3$};
        \node[outer sep=10pt] (4-0) at (4,0) {$5$};
        \node[outer sep=10pt] (6-0) at (6,0) {$6$};
        \node[outer sep=10pt] (8-0) at (8,0) {$7$};
        \node[outer sep=10pt] (10-0) at (10,0) {$8$};

        \node[outer sep=10pt] (1-1) at (1,1) {$2$};
        \node[outer sep=10pt] (3-1) at (3,1) {$4$};
        \node[outer sep=10pt] (5-1) at (5,1) {$5$};
        \node[outer sep=10pt] (7-1) at (7,1) {$7$};
        \node[outer sep=10pt] (9-1) at (9,1) {$8$};

        \node[outer sep=10pt] (2-2) at (2,2) {$4$};
        \node[outer sep=10pt] (4-2) at (4,2) {$4$};
        \node[outer sep=10pt] (6-2) at (6,2) {$7$};
        \node[outer sep=10pt] (8-2) at (8,2) {$7$};

        \node[outer sep=10pt] (3-3) at (3,3) {$4$};
        \node[outer sep=10pt] (5-3) at (5,3) {$5$};
        \node[outer sep=10pt] (7-3) at (7,3) {$7$};

        \node[outer sep=10pt] (4-4) at (4,4) {$4$};
        \node[outer sep=10pt] (6-4) at (6,4) {$6$};

        \node[outer sep=10pt] (5-5) at (5,5) {$5$};

        \draw[rounded corners] (0-0.west) -- (0-0.south) -- (1-1.east) -- (1-1.north) -- cycle;
        \draw[rounded corners] (2-0.south) -- (2-0.east) -- (2-0.north) -- (2-0.west) -- cycle;
        \draw[rounded corners] (4-0.west) -- (4-0.south) -- (5-1.east) -- (5-1.north) -- cycle;
        \draw[rounded corners] (6-0.south) -- (6-0.east) -- (6-0.north) -- (6-0.west) -- cycle;
        \draw[rounded corners] (8-0.south) -- (8-0.east) -- (7-1.east) -- (8-2.east) -- (7-3.north) -- (6-2.west) -- cycle;
        \draw[rounded corners] (10-0.south) -- (10-0.east) -- (9-1.north) -- (9-1.west) -- cycle;

        \draw[rounded corners] (3-1.south) -- (4-2.east) -- (3-3.east) -- (4-4.east) -- (4-4.north) -- (2-2.west) -- cycle;

        \draw[rounded corners] (5-3.south) -- (5-3.east) -- (5-3.north) -- (5-3.west) -- cycle;
        \draw[rounded corners] (6-4.south) -- (6-4.east) -- (6-4.north) -- (6-4.west) -- cycle;
        \draw[rounded corners] (5-5.south) -- (5-5.east) -- (5-5.north) -- (5-5.west) -- cycle;
    \end{tikzpicture}
    \caption{The maximal connected blocks of a triple-free GT pattern.}
    \label{fig:example-of-maximal-connected blocks}
\end{figure}

The following lemma shows that the variant inversion number~$\ic(T)$ naturally decomposes into those of the maximal connected blocks.
\begin{lemm} \label{lemm:ic-into-smaller-parts}
    Let $T$ be a monotone triangle with entries in $\{1 < \overline{1} <2 < \overline{2} < \cdots < k_n-1 < \overline{k_n-1} < k_n\}$ and with strictly increasing bottom row~$\mathbf{k} = (k_1, k_2, \ldots, k_n)$.
    Then, we have
    \begin{equation*}
        \ic(T) = \sum_{C \in \mc(T)} \ic(C).
    \end{equation*}
\end{lemm}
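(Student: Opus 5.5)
The identity is a bookkeeping statement: every occurrence counted by $\ic(T)$ is a pair of diagonally adjacent entries, and each such pair lies inside exactly one maximal connected block. The plan is to show that the occurrences counted by $\overline{\alpha}(T)$ and $\beta(T)$ are in bijection with the disjoint union, over $C \in \mc(T)$, of the occurrences counted by $\overline{\alpha}(C)$ and $\beta(C)$.

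\textbf{Step 1: each occurrence lies in a single block.} Take an occurrence of $\begin{array}{cc} & \overline{t} \\ \overline{t} & \end{array}$, that is, $T_{i,j}=T_{i+1,j}=\overline{t}$. The two entries are diagonally adjacent and both belong to $\{t,\overline{t}\}$. Hence they form a connected block, which is contained in some maximal connected block. The same holds for an occurrence of $\begin{array}{cc}t & \\ & t \end{array}$, that is, $T_{i,j}=T_{i+1,j+1}=t$.

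\textbf{Step 2: maximal blocks are the components of a graph.} Consider the graph on the entries of $T$ in which two diagonally adjacent entries are joined whenever their unbarred values (their images under $\psi_2$) coincide. Every connected block is a connected subgraph of this graph. Conversely, every connected component is a connected block, because along each edge the unbarred value is constant, so it is constant on the whole component. Therefore the maximal connected blocks are exactly the connected components. In particular they partition the entries of $T$, and each edge, hence each pattern occurrence, lies in exactly one $C \in \mc(T)$.

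\textbf{Step 3: matching the counts.} The count $\overline{\alpha}(C)$ is defined as the number of occurrences of the pattern with both entries in $C$. By Step 2, summing $\overline{\alpha}(C)$ over $C \in \mc(T)$ counts each occurrence in $T$ exactly once, so $\sum_{C}\overline{\alpha}(C)=\overline{\alpha}(T)$. The same argument gives $\sum_{C}\beta(C)=\beta(T)$, and adding the two identities yields the lemma.

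\textbf{Main obstacle.} The only delicate point is Step 2, namely that maximal blocks neither overlap nor share pattern occurrences. The component description settles this. One should also confirm that the patterns in the definitions of $\overline{\alpha}(C)$ and $\beta(C)$ use the same diagonal orientations as $\overline{\alpha}(T)$ and $\beta(T)$; this follows directly from the definitions.
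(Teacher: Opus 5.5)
Your proof is correct and uses the same idea as the paper's one-line argument: every occurrence counted by $\ic(T)$ is a pair of diagonally adjacent entries with the same unbarred value, so it lies inside a single maximal connected block. Your Step 2 identifies the maximal connected blocks with the connected components of the ``equal unbarred value'' adjacency graph, which makes explicit what the paper leaves implicit, namely that these blocks partition $T$ and so no occurrence is counted twice.
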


\begin{proof}
    Two diagonally adjacent elements in distinct maximal connected blocks do not contribute to $\ic(T)$. Thus, we can calculate $\ic(T)$ by calculating $\ic(C)$ for each $C \in \mc(T)$ and taking the sum over them.
\end{proof}

We also define $\omega_{\FSA}(\cdot)$ for maximal connected blocks.

\begin{defi}
    Let $T'$ be a triple-free GT pattern with entries in $\{1 < 2 < \cdots < k_n-1 < k_n\}$ and with strictly increasing bottom row~$\mathbf{k} = (k_1, k_2, \ldots, k_n)$.
    Let $C' \in \mc(T')$.
    Define the weight~$\omega_{\FSA}(C')$ of $C'$ by
    \begin{align*}
        \omega_{\FSA}(C') = 2^{\#C' - \#\{t\quad t\ \text{in}\ C'\}},
    \end{align*}
    where $\#C'$ is the number of entries in $C'$ and $\#\{t\quad t\ \text{in}\ C'\}$ counts the number of equal-entry pairs in the rows of $C'$.
\end{defi}

The next lemma shows that the weight~$\omega_{\FSA}(T')$ naturally decomposes into those of the maximal connected blocks, analogously to \cref{lemm:ic-into-smaller-parts}.
\begin{lemm} \label{lemm:omega-into-smaller-parts}
    Let $T'$ be a triple-free GT pattern with entries in $\{1 < 2 < \cdots < k_n-1 < k_n\}$ and with strictly increasing bottom row~$\mathbf{k} = (k_1, k_2, \ldots, k_n)$.
    Then, we have
    \begin{equation*}
        \omega_{\FSA}(T') = \prod_{C' \in \mc(T')} \omega_{\FSA}(C').
    \end{equation*}
\end{lemm}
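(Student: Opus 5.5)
The plan is to write both sides as powers of $2$ and compare exponents. By definition $\omega_{\FSA}(T')=2^{r}$, where $r$ is the number of entries of $T'$ that are not equal to both their upper-left and upper-right neighbours. For each block we have $\omega_{\FSA}(C')=2^{\#C'-\#\{t\quad t\ \text{in}\ C'\}}$. The maximal connected blocks partition the entries of $T'$, since each entry lies in exactly one maximal block. So $\sum_{C'}\#C'$ is the total number of entries, and it suffices to prove
\[
\#\{\text{entries of }T'\}-r=\sum_{C'\in\mc(T')}\#\{t\quad t\ \text{in}\ C'\}.
\]
The left-hand side counts the entries equal to both their upper-left and upper-right neighbours, that is, the boxed entries in the introductory example. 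I would prove this identity by exhibiting a bijection between such entries and the equal horizontal pairs lying inside a single block.

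The key steps are as follows. First, let $T'_{i+1,j+1}$ be an entry equal to both $T'_{i,j}$ and $T'_{i,j+1}$. Interlacing gives $T'_{i,j}\le T'_{i+1,j+1}\le T'_{i,j+1}$, and all three are equal, so the pair $(T'_{i,j},T'_{i,j+1})$ is an equal pair in a row. These two entries are connected through $T'_{i+1,j+1}$ via diagonal adjacency, so the pair lies inside one maximal block. Conversely, take a row pair $T'_{i,j}=T'_{i,j+1}=t$, with $i\ge 2$; no equal pair can occur in the bottom row $\mathbf{k}$, since it is strictly increasing. Interlacing forces $T'_{i-1,j}$ to lie between these two entries, so $T'_{i-1,j}=t$, and this entry has both of its upper neighbours equal to it. These two assignments are mutually inverse.

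Second, I would check that every equal row pair lying in a block is counted exactly once. An equal pair $t\ t$ in a row of $T'$ is automatically inside a single block, by the first step, so $\sum_{C'}\#\{t\quad t\ \text{in}\ C'\}$ equals the total number of equal row pairs in $T'$. Here I would interpret the notation as counting horizontally adjacent pairs, which is consistent with how it counts equal-entry pairs in the rows of $C'$. Combining the two steps gives the identity, and exponentiating yields the lemma.

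The main obstacle is mostly bookkeeping: matching the paper's orientation convention (row $i$ at the top versus at the bottom) with the phrase ``upper-left and upper-right neighbours'', and making sure the bottom row creates no exceptions. I would handle the orientation by following the introductory example, where entries are compared with the row above in the drawing, that is, with the next-longer row. Triple-freeness is not really needed here, except that it guarantees each entry meets at most one equal pair in the row above, which keeps the correspondence transparent.
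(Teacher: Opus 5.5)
Your reduction is right, and it is presumably what the paper means when it calls the lemma immediate. Since the maximal blocks partition the entries, the lemma amounts to showing that the number of entries equal to both upper neighbours equals the number of equal adjacent pairs in rows.

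The flaw is in the converse half of your bijection, which uses the opposite orientation from the forward half. In the paper's drawing the bottom row is row $n$ (namely $\mathbf{k}$), and row $i$ sits directly above the longer row $i+1$. In the introductory example, the boxed $4$ in the five-entry bottom row is compared with the two $4$'s of the four-entry row above it. Your first step respects this: the boxed entry is $T'_{i+1,j+1}$ and its pair lies in row $i$.

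In the converse, however, you send a pair $T'_{i,j}=T'_{i,j+1}$ to $T'_{i-1,j}$, which lies \emph{above} the pair. For that entry, $T'_{i,j}$ and $T'_{i,j+1}$ are its \emph{lower} neighbours, not its upper ones. Moreover, the composite of your two maps sends $T'_{i+1,j+1}$ to $T'_{i-1,j}$, so they are not mutually inverse. As written, you have only an injection from boxed entries into equal pairs, plus a bijection between equal pairs and entries equal to both lower neighbours. That does not yet give the identity. Your closing paragraph shows where the slip comes from: the row above in the drawing is the next-\emph{shorter} row, not the next-longer one.

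The fix is one line: the inverse should send the pair $(T'_{i,j},T'_{i,j+1})$ to $T'_{i+1,j+1}$. This entry exists because $i\le n-1$: the strictly increasing bottom row contains no equal pair. That is exactly where this hypothesis enters. In your version it is never actually used, since $T'_{i-1,j}$ exists for every $i\ge 2$. The entry $T'_{i+1,j+1}$ equals $t$ by the sandwich $T'_{i,j}\le T'_{i+1,j+1}\le T'_{i,j+1}$ from your first step. With this change the two maps are inverse, and the rest of your argument goes through: the blocks partition the entries, and each equal pair lies in a single block.

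Incidentally, your converse does prove that entries equal to both lower neighbours are also equinumerous with the equal pairs. So, for a strictly increasing bottom row, $\omega_{\FSA}$ comes out the same under either reading of ``upper''. A single proof, however, has to use one reading throughout.
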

This lemma is immediate to show.

We introduce the following terms to describe the structure of connected blocks~$C$; these terms will be used in the proof of \cref{prop:omega-in-ic-in-small-parts} below.
We define the \myemph{width} of a row of $C$ to be the number of entries of $C$ in that row.
A maximal sequence of consecutive rows of $C$ of width~$1$ is called a \myemph{zigzag part}.
A row of $C$ of width~$2$ is called a \myemph{joint part}.
It is easy to see that, in general, a connected block consists of alternating zigzag and joint parts, starting and ending with zigzag parts.

In the following, for a subarray~$C \subseteq T$, let $\psi_2(C)$ denote the subarray obtained from $C$ by removing all bars, and define $b_T(C) \coloneqq [\text{$C$ intersects the bottom row of $T$}]$.
Analogously, for $C' \subseteq T'$, define $b_{T'}(C')$.
\begin{prop} \label{prop:omega-in-ic-in-small-parts}
    Let $T'$ be a triple-free GT pattern with entries in $\{1 < 2 < \cdots < k_n-1 < k_n\}$ and with strictly increasing bottom row~$\mathbf{k} = (k_1, k_2, \ldots, k_n)$.
    Let $C' \in \mc(T')$.
    Then, we have
    \begin{equation}
        2^{-b_{T'}(C')} \omega_{\FSA}(C') = \sum_{C \in \psi_2^{-1}(C')} 2^{\ic(C)}, \label{eq:ic(C)-to-ic(F)}
    \end{equation}
    where the sum is restricted to those $C$ whose bottom-row entry, if present, is unbarred.
\end{prop}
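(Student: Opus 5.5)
We prove \eqref{eq:ic(C)-to-ic(F)} by a transfer-matrix computation that reads the block $C'$ row by row from bottom to top. Let $t$ be the common value of the entries of $C'$. Every $C\in\psi_2^{-1}(C')$ is obtained by choosing, for each entry of $C'$, whether it is $t$ or $\overline{t}$.

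\textbf{Step 1: only internal constraints matter.} The choices must be compatible with the monotone-triangle conditions. Any entry of $T$ diagonally adjacent to an entry of $C'$ but outside $C'$ has a different unbarred value, by maximality. Since the order $1<\overline{1}<2<\cdots$ refines the integer order, interlacing with such entries holds automatically. By \cref{lemm:ic-into-smaller-parts}, such pairs also contribute nothing to $\ic$. So only constraints and patterns inside $C$ matter.

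\textbf{Step 2: the allowed local configurations.} Triple-freeness gives every row of $C'$ width $1$ or $2$, and $C'$ alternates zigzag parts and joint parts. In a joint row $t\ t$, strictness of rows in $T$ forces the lift $t\ \overline{t}$. A bottom-row entry is forced to be unbarred.

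Now take two consecutive entries of a zigzag part, a lower entry $\ell$ and an upper entry $v$.
\begin{itemize}
\item If $\ell$ is the lower-left neighbour of $v$, interlacing requires $\ell\le v$. The allowed pairs $(\ell,v)$ are $(t,t)$, $(t,\overline{t})$ and $(\overline{t},\overline{t})$. Only the last one creates a pattern counted by $\overline{\alpha}$, contributing a factor $2$.
\item If $\ell$ is the lower-right neighbour of $v$, interlacing requires $v\le\ell$. The allowed pairs are $(t,t)$, $(\overline{t},t)$ and $(\overline{t},\overline{t})$. Only $(t,t)$ is counted by $\beta$, contributing a factor $2$.
\end{itemize}

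\textbf{Step 3: the transfer vector doubles at each zigzag step.} Record a vector $(u,b)$, where $u$ (resp.\ $b$) is the total weight $2^{\ic}$ of partial lifts whose current top entry is $t$ (resp.\ $\overline{t}$). The two kinds of step act as
\begin{align*}
(u,b)&\mapsto(u,\,u+2b) && \text{(upper entry above-right of lower)},\\
(u,b)&\mapsto(2u+b,\,b) && \text{(upper entry above-left of lower)}.
\end{align*}
In both cases the total $u+b$ is multiplied by exactly $2$.

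\textbf{Step 4: joints.} Below a joint row $t\ \overline{t}$ sits a single entry $x$, which is the lower-right neighbour of the left entry $t$ and the lower-left neighbour of the right entry $\overline{t}$. Both interlacing conditions hold for either choice of $x$. If $x=t$ we get one $\beta$-pattern, and if $x=\overline{t}$ one $\overline{\alpha}$-pattern. Hence passing through the joint row multiplies the total by $2$ and collapses the state.

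Above a joint, the next entry $y$ satisfies $t\le y\le\overline{t}$, so both choices are allowed. Neither choice creates a counted pattern: the lower-left neighbour is unbarred and the lower-right neighbour is barred. Thus $y$ starts a fresh vector $(W,W)$, where $W$ is the accumulated total, and the total is again doubled.

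\textbf{Step 5: assembling the count.} Each zigzag entry other than the first entry of $C'$ doubles the total, and each joint row contributes one factor $2$ for its two entries together with the entry above it. Equivalently:
\begin{itemize}
\item every entry of $C'$ contributes a factor $2$;
\item every joint, which is exactly one equal pair $t\ t$ in a row, removes one factor $2$;
\item the initial entry contributes $(1,1)$, with total $2$, or $(1,0)$, with total $1$, if it lies on the bottom row.
\end{itemize}
This gives a total of $2^{\#C'-\#\{t\ t\}-b_{T'}(C')}=2^{-b_{T'}(C')}\omega_{\FSA}(C')$, as claimed.

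The main obstacle is Step 4: the bookkeeping at joint rows, and checking that the zigzag/joint structure, including the ends of $C'$, really yields these factors. I would first verify the precise adjacency of joint entries with their neighbours, then confirm the whole count against the worked example preceding this proposition.
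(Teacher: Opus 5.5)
Your transfer-matrix argument is a genuinely different, and shorter, route than the paper's. However, it repeats a false structural claim that the paper also makes, so one case is not covered.

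\textbf{Comparison.} The paper first shows that each joint row contributes to $\ic$ independently of the lift: $0$ with the entry above and $+1$ with the entry below. The sum therefore factors as $2^{\#\text{joints}}$ times a product over zigzag parts. The zigzag identity $\sum_{Z}2^{\ic(Z)}=2^{\#Z'}$ is then proved separately, by translating lifts into order ideals of a fence and inducting on the last element.

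You replace all of this by one bottom-to-top pass. Your two zigzag steps act on $(u,b)$ by $\left(\begin{smallmatrix}1&0\\1&2\end{smallmatrix}\right)$ and $\left(\begin{smallmatrix}2&1\\0&1\end{smallmatrix}\right)$. Both have column sums $2$, so $u+b$ doubles whatever the current state is, and a joint is a rank-one step. Since the doubling holds for every starting vector, the bottom-row case (start at $(1,0)$) comes for free. The paper only calls that case ``identical'', although its zigzag lemma is stated for unrestricted lifts and the restricted value $2^{\#Z'-1}$ is what is actually needed. Your Steps 1--3 and your analysis of an isolated joint in Step 4 are correct.

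\textbf{The gap.} The claim in Step 2 that $C'$ alternates zigzag parts and joint parts is false: two consecutive rows of $C'$ can both have width $2$. The paper asserts the same claim.

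For a concrete case, take $\mathbf{k}=(1,2,3,4)$ and $T'$ with rows $2$;\ $2\ 2$;\ $2\ 2\ 3$;\ $1\ 2\ 3\ 4$ (top to bottom). The block of value $2$ has row widths $1,2,2,1$ from bottom to top. The entry $T_{2,1}$ directly above the joint in row $3$ is the left entry of the joint in row $2$, so it is forced to be $2$. Hence ``both choices are allowed'' in Step 4 fails, and likewise the entry below a joint can be forced. The identity still holds here: only $T_{1,1}$ is free, both lifts have $\ic=2$, and the sum is $8=2^{-1}\cdot 2^{6-2}$. But your argument does not prove it.

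The repair fits your framework. Suppose row $i$ carries the joint $(T_{i,j},T_{i,j+1})=(t,\overline t)$ and row $i-1$ is also a joint.
\begin{itemize}
\item That joint is either $(T_{i-1,j},T_{i-1,j+1})$ or $(T_{i-1,j-1},T_{i-1,j})$, and in both cases it is forced to $(t,\overline t)$.
\item Exactly one counted pattern occurs between the two rows: $T_{i-1,j+1}=T_{i,j+1}=\overline t$ in the first case, $T_{i-1,j-1}=T_{i,j}=t$ in the second.
\item So the total doubles at this transition too.
\end{itemize}
With all four transitions (zigzag$\to$zigzag, zigzag$\to$joint, joint$\to$zigzag, joint$\to$joint), every row of $C'$ above its lowest row doubles the total. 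The lowest row has width $1$ and starting total $2^{1-b_{T'}(C')}$. This gives $2^{R-b_{T'}(C')}$, where $R$ is the number of rows of $C'$. Since every row has width $1$ or $2$, $R=\#C'-\#\{t\quad t\ \text{in}\ C'\}$. This row count replaces your per-entry bookkeeping in Step 5.
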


\begin{proof}
    \renewcommand{\qedsymbol}{}
    We give the argument in the case where $C'$ has no entry in the bottom row of $T'$.
    The case where $C'$ has a bottom-row entry is identical, except that one restricts to preimages whose bottom-row entry is unbarred.

    Let $C' \in \mc(T')$, and let $C \in \psi_2^{-1}(C')$.
    Clearly, there are at most two equal entries in $C'$ in each row.
    If a row of $C'$ contains two entries, then $C$ is uniquely determined at that row.
    Namely, if a row of $C'$ contains the pattern~$t\qquad t$, the corresponding entries in $C$ must be the pattern~$t\qquad \overline{t}$,

    Moreover, it turns out that the contribution to $\ic(C)$ by such rows does not depend on the choice of $C$.
    To show that, assume that $C$ contains the pattern~$t\qquad t$ for some $t$ in a row and there is another $t$ above the two equal entries, say, $\begin{array}{ccc} & t & \\ t & & t \end{array}$.
    Then, $C$ must be either $\begin{array}{ccc} & t & \\ t & & \overline{t} \end{array}$ or $\begin{array}{ccc} & \overline{t} & \\ t & & \overline{t} \end{array}$ in those two rows.
    However, neither of the two triangles contains any of the two patterns~$\begin{array}{cc} & \overline{t} \\ \overline{t} & \end{array}$ and $\begin{array}{cc} t & \\ & t\end{array}$.
    Thus, the contribution to $\ic(C)$ in those two rows is $0$ either way.
    Similarly, we can show that if there is another $t$ below the equal entries, the two rows always contribute to $\ic(C)$ by $+1$ independently of the choice of $C$.

    Denote by $Z_1, Z_2, \ldots, Z_{\ell}$ (resp. $Z'_1, Z'_2, \ldots, Z'_{\ell}$) the zigzag parts of $C$ (resp. $C'$) from bottom to top, and denote by $J_1, J_2, \ldots, J_{\ell}$ (resp. $J'_1, J'_2, \ldots, J'_{\ell}$) the joint parts of $C'$ from bottom to top.
    Since the zigzag parts and joint parts alternate, $C$ and $C'$ can be schematically represented as follows using $Z_i, J_i, Z'_i$, and $J'_i$.
    \begin{center}
        \begin{tikzpicture}[scale=0.6]
            \node at (-2,5.5) {$C:$};

            \node (F1) at (1,0) {$Z_1$};
            \node (x1) at (0,1) {$t$};
            \node (xb1) at (2,1) {$\overline{t}$};
            \node (F2) at (1,2) {$Z_2$};
            \node (x2) at (0,3) {$t$};
            \node (xb2) at (2,3) {$\overline{t}$};

            \node (A) at (1,4) {};
            \node (B) at (1,5) {};

            \node (Fl-1) at (1,6) {$Z_{\ell-1}$};
            \node (xl-1) at (0,7) {$t$};
            \node (xbl-1) at (2,7) {$\overline{t}$};
            \node (Fl) at (1,8) {$Z_{\ell}$};

            \draw (x1) -- (F1);
            \draw (xb1) -- (F1);
            \draw (F2) -- (x1);
            \draw (F2) -- (xb1);
            \draw (x2) -- (F2);
            \draw (xb2) -- (F2);
            \draw (x2) -- (0.5,3.5);
            \draw (xb2) -- (1.5,3.5);

            \draw[dotted,very thick] (A) -- (B);

            \draw (Fl-1) -- (0.3,5.3);
            \draw (Fl-1) -- (1.7,5.3);
            \draw (xl-1) -- (Fl-1);
            \draw (xbl-1) -- (Fl-1);
            \draw (Fl) -- (xl-1);
            \draw (Fl) -- (xbl-1);
        \end{tikzpicture}
        \hspace{20mm}
        \begin{tikzpicture}[scale=0.6]
            \node at (-2,5.5) {$C:$};

            \node (F1) at (1,0) {$Z'_1$};
            \node (x1) at (0,1) {$t$};
            \node (xb1) at (2,1) {$t$};
            \node (F2) at (1,2) {$Z'_2$};
            \node (x2) at (0,3) {$t$};
            \node (xb2) at (2,3) {$t$};

            \node (A) at (1,4) {};
            \node (B) at (1,5) {};

            \node (Fl-1) at (1,6) {$Z'_{\ell-1}$};
            \node (xl-1) at (0,7) {$t$};
            \node (xbl-1) at (2,7) {$t$};
            \node (Fl) at (1,8) {$Z'_{\ell}$};

            \draw (x1) -- (F1);
            \draw (xb1) -- (F1);
            \draw (F2) -- (x1);
            \draw (F2) -- (xb1);
            \draw (x2) -- (F2);
            \draw (xb2) -- (F2);
            \draw (x2) -- (0.5,3.5);
            \draw (xb2) -- (1.5,3.5);

            \draw[dotted,very thick] (A) -- (B);

            \draw (Fl-1) -- (0.3,5.3);
            \draw (Fl-1) -- (1.7,5.3);
            \draw (xl-1) -- (Fl-1);
            \draw (xbl-1) -- (Fl-1);
            \draw (Fl) -- (xl-1);
            \draw (Fl) -- (xbl-1);
        \end{tikzpicture}
    \end{center}
    For distinct $i$ and $j$, the configurations in $Z_i$ and $Z_j$ can be determined independently since $Z_i$ and $Z_j$ are separated by one or more joint parts.
    Thus, the RHS of \cref{eq:ic(C)-to-ic(F)} can be transformed as follows:
    \begin{equation}
        \sum_{C} 2^{\ic(C)} = \left(\prod_{i=1}^{\ell} \sum_{Z_i \in \psi_2^{-1}(Z'_i)} 2^{\ic(Z_i)} \right) \times 2^{\ell-1}. \label{eq:ic-of-C-in-ic-of-Z}
    \end{equation}
    Here, the factor~$2^{\ell-1}$ comes from the argument about joint parts above.

    Hence, we want to calculate the sum~$\sum_{F_i\text{ preimage of }Z'_i} 2^{\ic(Z_i)}$.
    We defer this computation to the next lemma.
    After proving the lemma, we return to the proof of this proposition and complete it.
\end{proof}

\begin{lemm}\label{lemm:ic-of-zigzag}
    Let $Z'$ be a zigzag part of a triple-free GT pattern.
    Then, we have
    \begin{equation}
        \sum_{Z \in \psi_2^{-1}(Z')} 2^{\ic(Z)} = 2^{\#Z'}. \label{eq:ic-of-zigzag}
    \end{equation}
\end{lemm}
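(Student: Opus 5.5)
Since $Z'$ is a maximal run of consecutive width-one rows of a block with constant value $t$, its entries form a lattice path: each entry is diagonally adjacent to the entry in the row below, either to its upper-left or to its upper-right. I encode $Z'$ bottom to top as entries $z_1,\dots,z_m$, with $m=\#Z'$, all equal to $t$, and record for each consecutive pair $(z_r,z_{r+1})$ whether $z_{r+1}$ is the upper-left neighbour of $z_r$ (the pattern $\begin{array}{cc} & t \\ t & \end{array}$ read with $z_{r+1}$ on top) or the upper-right neighbour (the pattern $\begin{array}{cc} t & \\ & t \end{array}$). A preimage $Z$ is a choice of bar or no bar on each $z_r$. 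I will first check that no choice violates the monotone triangle conditions inside $Z'$. The order among $t,\overline t$ is compatible with both diagonal relations between two equal entries, and strictness in rows is vacuous since each row has width one. Hence $\psi_2^{-1}(Z')$ is exactly the set of all $2^m$ bar assignments, in the situation without a bottom-row constraint, which is the one treated in the proof being completed.

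Next I compute $\ic(Z)$ as a sum over consecutive pairs. A pair contributes $1$ precisely when the step is of the first kind and both entries are barred, or of the second kind and both entries are unbarred; otherwise it contributes $0$. I therefore set up a transfer matrix. Let $f_r(\epsilon)$ be the weighted count of assignments to $z_1,\dots,z_r$ with $z_r$ in state $\epsilon\in\{\text{bar},\text{unbar}\}$. Then for either step type the transfer matrix is $\begin{pmatrix}2&1\\1&1\end{pmatrix}$, with the weight $2$ placed on the barred–barred entry or on the unbarred–unbarred entry according to the step type. This is not enough: a product of such matrices does not give $2^m$ in general (e.g.\ two steps of the same type already give $1+1+1+2\cdot2\neq 8$ for three entries with $\ic$ sums). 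So I must use more structure.

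The extra input, and the main obstacle, is the triple-free and maximality constraint. Within a zigzag part, consecutive steps must alternate direction. Two consecutive upper-left steps would give entries $z_r,z_{r+1},z_{r+2}$ lying on one diagonal, forcing via interlacing a row with three equal entries $t$ in the GT pattern, or forcing the block to have width two in some row. I would verify this using $T'_{i+1,j}\le T'_{i,j}\le T'_{i+1,j+1}$ and triple-freeness. With the steps alternating, the product of an alternating pair of matrices is $\begin{pmatrix}2&1\\1&1\end{pmatrix}\begin{pmatrix}1&1\\1&2\end{pmatrix}=\begin{pmatrix}3&4\\2&3\end{pmatrix}$. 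Its column sums with the initial vector $(1,1)$ give $5+7=12\ne 16$, so alternation alone may still fail, and I would re-derive carefully which consecutive pairs genuinely occur given rows of width one. If the count does not close, I would instead do an explicit sign-reversing/pairing argument: toggle the bar on the topmost entry and show that the two preimages' weights sum to twice the weight of the truncated zigzag, giving $2^{m}$ by induction on $m$, with base case $m=1$ giving $1+1=2$.
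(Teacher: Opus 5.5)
\textbf{There is a genuine gap: the proposal fails at its first step.} The set $\psi_2^{-1}(Z')$ is \emph{not} the set of all $2^m$ bar assignments. The interlacing conditions $T_{i+1,j}\le T_{i,j}\le T_{i+1,j+1}$ are genuine inequalities for the order $t<\overline t$, so each step forbids one combination:
\begin{itemize}
\item for a step of shape $\begin{array}{cc} & t \\ t & \end{array}$, the lower entry must be $\le$ the upper one, so ``lower barred, upper unbarred'' is forbidden;
\item for a step of shape $\begin{array}{cc} t & \\ & t \end{array}$, the upper entry must be $\le$ the lower one, so ``lower unbarred, upper barred'' is forbidden.
\end{itemize}
Already for $m=2$ there are only three preimages, of weights $1,1,2$, summing to $4=2^2$. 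Your four assignments give $5$. This is why the paper identifies preimages with order ideals of a fence, and it is the real reason your matrix $\begin{pmatrix}2&1\\1&1\end{pmatrix}$ fails.

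The failure is not due to a lack of alternation, and your alternation claim is itself false. In the monotone triangle with rows $(4)$, $(3,4)$, $(2,3,4)$, $(1,2,3,5)$ (top to bottom), the entries $T_{3,3}=T_{2,2}=T_{1,1}=4$ form a zigzag part with two consecutive steps of the same shape. Triple-freeness constrains rows, not diagonals, which is why the paper handles arbitrary fences, chains included. Your verbal labels ``upper-left/upper-right'' are also swapped relative to the patterns you attach them to, though your weights follow the patterns correctly. Finally, the fallback ``toggle the bar on the topmost entry'' does not work as stated, because the toggled array may violate interlacing.

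\textbf{How to repair it.} Once the forbidden entry is inserted, your own transfer-matrix setup closes at once. Order the states as (unbarred, barred), with rows indexed by $z_r$ and columns by $z_{r+1}$. The two step matrices are
\[
\begin{pmatrix}1&1\\0&2\end{pmatrix}\quad\text{for } \begin{array}{cc} & t \\ t & \end{array}, \qquad \begin{pmatrix}2&0\\1&1\end{pmatrix}\quad\text{for } \begin{array}{cc} t & \\ & t \end{array},
\]
and both have all row sums equal to $2$. Hence $\sum_\epsilon f_{r+1}(\epsilon)=2\sum_\epsilon f_r(\epsilon)$ for every step, whatever the sequence of shapes, and the total is $2^{m-1}\cdot 2=2^m$.

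Concretely, for any admissible assignment on $z_1,\dots,z_{m-1}$, the top entry either has two admissible states each contributing a factor $1$, or a single forced state contributing a factor $2$. This is the correct form of your toggling idea.

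This route differs from the paper's. The paper inducts on the fence by adjoining an end element $z$ and splitting on whether $z$ lies in the order ideal. The row-sum argument is more local, and it also shows directly that prescribing the state of the lowest entry gives $2^{m-1}$. That is exactly what the bottom-row case of the subsequent proposition needs.
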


Before proving this lemma, we express our objects in terms of order ideals of certain posets.

\begin{defi}
    A finite poset is called a \myemph{fence} if its Hasse diagram is a path graph.
    In other words, a fence is a poset whose Hasse diagram is a connected graph such that there are exactly two vertices with degree~$1$ and all other vertices have degree~$2$.
\end{defi}

Clearly, the underlying shape of $Z'$ can be identified, after a $90^\circ$ counterclockwise rotation, with the Hasse diagram of a fence $F$, and each subarray~$Z$ corresponds to an order ideal of $F$.
In the following, an element in an order ideal is represented by a blue point, while a black point represents an element in its complement. See \cref{fig:counterclockwise-rotation-example} for an example.

After a $90^\circ$ counterclockwise rotation, $\overline{\alpha}(Z)$ and $\beta(Z)$ count the numbers of occurrences of the patterns~\tikz[scale=0.3]{\node[circle,fill=wongblack,minimum size = 0.2cm, inner sep=0pt] (a) at (1,0) {}; \node[circle,fill=wongblack,minimum size = 0.2cm, inner sep=0pt] (b) at (0,1) {}; \draw (a)--(b);} and \tikz[scale=0.3]{\node[circle,fill=wongblue,minimum size = 0.2cm, inner sep=0pt] (a) at (0,0) {}; \node[circle,fill=wongblue,minimum size = 0.2cm, inner sep=0pt] (b) at (1,1) {}; \draw (a)--(b);}, respectively.
Thus, for an order ideal~$I$ of the fence~$F$, we define $\ic(I)$ as the total number of occurrences of these two patterns.
For example, the order ideal~$I$ in \cref{fig:counterclockwise-rotation-example} has $\ic(I) = 1+1=2$.
By this translation, \cref{eq:ic-of-zigzag} can be expressed in terms of order ideals of a fence as follows:
\begin{equation}
    \sum_{I \in \mathcal{J}(F)} 2^{\ic(I)} = 2^{\#F}, \label{eq:ic-in-order-ideals}
\end{equation}
where $\mathcal{J}(F)$ is the set of order ideals of $F$.

\begin{figure}[bht]
    \centering
    \begin{tikzpicture}[scale=0.5]
        \begin{scope}
            \node at (1, 1) {$t$};
            \node at (0, 2) {$t$};
            \node at (1, 3) {$\overline{t}$};
            \node at (2, 4) {$\overline{t}$};
            \node at (1, 5) {$t$};
            \node at (2, 6) {$\overline{t}$};
        \end{scope}
        \draw[<->] (3,3)--(6,3);
        \begin{scope}[shift={(6,3)}]
            \node[circle,fill=wongblack,minimum size = 0.2cm, inner sep=0pt] (a) at (1,1) {};
            \node[circle,fill=wongblue,minimum size = 0.2cm, inner sep=0pt] (b) at (2,0) {};
            \node[circle,fill=wongblack,minimum size = 0.2cm, inner sep=0pt] (c) at (3,1) {};
            \node[circle,fill=wongblack,minimum size = 0.2cm, inner sep=0pt] (d) at (4,0) {};
            \node[circle,fill=wongblue,minimum size = 0.2cm, inner sep=0pt] (e) at (5,-1) {};
            \node[circle,fill=wongblue,minimum size = 0.2cm, inner sep=0pt] (f) at (6,0) {};
            \draw (a)--(b)--(c)--(d)--(e)--(f);
        \end{scope}
    \end{tikzpicture}
    \caption{Left: a configuration of a subarray. Right: the corresponding order ideal of the fence obtained after a $90^\circ$ counterclockwise rotation. Blue vertices belong to the order ideal, while black vertices belong to its complement.} \label{fig:counterclockwise-rotation-example}
\end{figure}

\begin{proof}[Proof of \cref{lemm:ic-of-zigzag}]
    Let $F$ be the fence corresponding to the shape of $Z'$.
    We show \cref{eq:ic-in-order-ideals} by induction on the number of elements~$\#F$.

    \cref{eq:ic-in-order-ideals} is easy to verify when $\#F = 1$.
    Now let $\ell \geq 1$, and suppose that \cref{eq:ic-in-order-ideals} holds for all fences $F$ with size~$\ell$.
    It suffices to show that, for every fence $F$ of size~$\leq\ell$, the fence obtained by attaching one element to the upper right or lower right of $F$ satisfies the equation, since any fence of size $\ell+1$ can be obtained from some fence of size $\ell$ in this way.
    Let $x$ and $y$ denote the second-rightmost and the rightmost elements of $F$, respectively.
    We assume $x \geq y$ holds.
    The other case, where $x \leq y$, can be dealt with similarly, and we omit its proof.
    Let $m$ be the number of elements $w \in F$ such that $w \geq y$.
    Let $F'$ be the fence consisting of the elements that are incomparable to $y$ (or, equivalently, to $x$).
    \Needspace{12\baselineskip}
    The fence~$F$ can be schematically represented as follows.
    \begin{center}
        \begin{tikzpicture}[scale=0.6]
            \node[circle,draw=black,thick,minimum size=18pt] (00) at (0,0) {$y$};
            \node[circle,draw=black,thick,minimum size=18pt] (-11) at (-1,1) {$x$};
            \node[circle,draw=black,thick,minimum size=18pt] (-22) at (-2,2) {};
            \node[circle,draw=black,thick,minimum size=18pt] (-33) at (-3,3) {};
            \node[circle,draw=black,thick,minimum size=18pt] (-44) at (-4,4) {};
            \node (F') at (-5,3) {$F'$};

            \node[inner sep=0pt] (A) at (0.4,0.4) {};
            \node[inner sep=0pt] (B) at (-3.6,4.4) {};

            \draw (F') -- (-44) -- (-33) -- (-22) -- (-11) -- (00);
            \draw[decoration={brace,mirror,amplitude=10pt},decorate] (A) -- node[above right=5pt] {$m$} (B);
        \end{tikzpicture}
    \end{center}

    We now attach one new element~$z$ to the upper right or lower right of $y$.
    First, consider the former case and denote the extended poset with $z$ attached by $F_1$.
    \begin{center}
        \begin{tikzpicture}[scale=0.6]
            \node at (-6.2, 2.5) {$F_1:$};
            \node[circle,draw=black,thick,minimum size=18pt] (11) at (1,1) {$z$};
            \node[circle,draw=black,thick,minimum size=18pt] (00) at (0,0) {$y$};
            \node[circle,draw=black,thick,minimum size=18pt] (-11) at (-1,1) {$x$};
            \node[circle,draw=black,thick,minimum size=18pt] (-22) at (-2,2) {};
            \node[circle,draw=black,thick,minimum size=18pt] (-33) at (-3,3) {};
            \node[circle,draw=black,thick,minimum size=18pt] (-44) at (-4,4) {};
            \node (F') at (-5,3) {$F'$};

            \node[inner sep=0pt] (A) at (0.4,0.4) {};
            \node[inner sep=0pt] (B) at (-3.6,4.4) {};

            \draw (F') -- (-44) -- (-33) -- (-22) -- (-11) -- (00) -- (11);
            \draw[decoration={brace,mirror,amplitude=10pt},decorate] (A) -- node[above right=5pt] {$m$} (B);
        \end{tikzpicture}
    \end{center}
    We divide the set~$\mathcal{J}(F_1)$ of order ideals of $F_1$ into two disjoint sets as follows:
    \begin{equation*}
        \mathcal{K}_1 = \Set{I \in \mathcal{J}(F_1)}{z \notin I} \quad \text{and} \quad \mathcal{K}_2 = \Set{I \in \mathcal{J}(F_1)}{z \in I}.
    \end{equation*}
    For any order ideal in $\mathcal{K}_1$, the patterns~\tikz[scale=0.3]{\node[circle,fill=wongblack,minimum size = 0.2cm, inner sep=0pt] (a) at (1,0) {}; \node[circle,fill=wongblack,minimum size = 0.2cm, inner sep=0pt] (b) at (0,1) {}; \draw (a)--(b);} and \tikz[scale=0.3]{\node[circle,fill=wongblue,minimum size = 0.2cm, inner sep=0pt] (a) at (0,0) {}; \node[circle,fill=wongblue,minimum size = 0.2cm, inner sep=0pt] (b) at (1,1) {}; \draw (a)--(b);} can happen only inside $F$ since \tikz[scale=0.3]{\node[circle,draw=black,thick,minimum size = 0.2cm, inner sep=0pt] (a) at (0,0) {$y$}; \node[circle,draw=black,thick,minimum size = 0.2cm, inner sep=0pt] (b) at (1,1) {$z$}; \draw (a)--(b);} cannot be \tikz[scale=0.3]{\node[circle,fill=wongblue,minimum size = 0.2cm, inner sep=0pt] (a) at (0,0) {}; \node[circle,fill=wongblue,minimum size = 0.2cm, inner sep=0pt] (b) at (1,1) {}; \draw (a)--(b);} when $z$ is colored black.
    Hence, by the induction hypothesis, we have
    \begin{equation}
        \sum_{I \in \mathcal{K}_1} 2^{\ic(I)} = \sum_{I \in \mathcal{J}(F)} 2^{\ic(I)} = 2^{\#F}. \label{eq:K1}
    \end{equation}
    For any order ideal in $\mathcal{K}_2$, $y$ and $z$ are fixed to be blue since when $z$ is blue, $y$ must be blue as well.
    Thus, the pattern~\tikz[scale=0.3]{\node[circle,fill=wongblue,minimum size = 0.2cm, inner sep=0pt] (a) at (0,0) {}; \node[circle,fill=wongblue,minimum size = 0.2cm, inner sep=0pt] (b) at (1,1) {}; \draw (a)--(b);} always happens for the part of $y$ and $z$.
    Also, notice that the part of $x$ and $y$ does not contribute to $\ic(I)$ for any $I \in \mathcal{J}(F')$.
    Hence, applying the induction hypothesis to $F' \setminus \{y, z\} = F \setminus \{y\}$, we get
    \begin{equation}
        \sum_{I \in \mathcal{K}_2} 2^{\ic(I)} = 2^1 \times \sum_{I \in \mathcal{J}(F \setminus \{y\})} 2^{\ic(I)} = 2^{1+\#F-1} = 2^{\#F}. \label{eq:K2}
    \end{equation}
    By combining \cref{eq:K1,eq:K2}, we obtain
    \begin{equation*}
        \sum_{I \in \mathcal{J}(F')} 2^{\ic(I)} = \sum_{I \in \mathcal{K}_1} 2^{\ic(I)} + \sum_{I \in \mathcal{K}_2} 2^{\ic(I)} = 2^{\#F} + 2^{\#F} = 2^{\#F + 1} = 2^{\#F'}.
    \end{equation*}
    The case in which the new element~$z$ is attached to the lower right of $y$ can be handled similarly.
    Therefore, the induction step is complete.
\end{proof}

\begin{rema}
    \cref{eq:ic-in-order-ideals} implies that the generating function
    \begin{equation}
        I_F(q) \coloneqq \sum_{I \in \mathcal{J}(F)} q^{\ic(I)}
    \end{equation}
    specialized at $q = 2$ depends only on the size of the fence~$\#F$.
    However, in general, this is not the case for other values of $q$.

    Regarding this generating function, computational experiments suggest that the number sequence $(a_n)_{n \geq 1}$ defined by
    \begin{equation*}
        a_n = \#\{I_F(1) \mid F\text{ is a fence of size } n\}
    \end{equation*}
    begins with
    \begin{equation*}
        1, 1, 2, 3, 6, 10, 16, 29, 51, 83, 148, 246, \ldots,
    \end{equation*}
    which appears to coincide with \href{https://oeis.org/A294444}{OEIS A294444}.
\end{rema}

\begin{proof}[Continuation of the proof of \cref{prop:omega-in-ic-in-small-parts}]
    We apply \cref{lemm:ic-of-zigzag} to \cref{eq:ic-of-C-in-ic-of-Z}.
    Notice that $\#C' = \sum_{i=1}^{\ell}\#Z'_i + 2(\ell-1)$ and $\ell-1 = \#\{t\quad t\ \text{in}\ C'\}$.
    Then we obtain
    \begin{align*}
        &\left(\prod_{i=1}^{\ell} \sum_{Z_i \in \psi_2^{-1}(Z'_i)} 2^{\ic(Z_i)} \right) \times 2^{\ell-1} =  \left(\prod_{i=1}^{\ell}2^{\#Z'_i}\right) \times 2^{\ell-1} = 2^{\sum_{i=1}^{\ell} \left(\#Z'_i\right)} \times 2^{\ell-1} \\
        &\qquad= 2^{\left(\sum_{i=1}^{\ell} \#Z'_i + 2(\ell-1)\right) - (\ell-1)} = 2^{\#C' - \#\{t\quad t\ \text{in}\ C'\}} = \omega_{\FSA}(C').
    \end{align*}
\end{proof}

\begin{proof}[Proof of \cref{prop:omega-in-ic}]
    Let $T'$ be a triple-free GT pattern and let $C'_1, C'_2, \ldots, C'_\ell$ be its maximal connected blocks.
    Then, we have
    \begin{align*}
        2^{-n} \omega_{\FSA}(T') &= 2^{-n} \prod_{i=1}^{\ell} \omega_{\FSA}(C'_i) \qquad\text{(by \cref{lemm:omega-into-smaller-parts})} \\
        &= \prod_{i=1}^{\ell} \sum_{C_i \in \psi_2^{-1}(C'_i)} 2^{\ic(C_i)} \qquad\text{(by \cref{prop:omega-in-ic-in-small-parts})} \\
        &= \sum_{\substack{C_1, \ldots, C_\ell \\ C_i \in \psi_2^{-1}(C'_i)}} 2^{\ic(C_1) + \cdots + \ic(C_\ell)} = \sum_{T \in \psi_2^{-1}(T')} 2^{\sum_{C \in \mc(T)} \ic(C)} \\
        &= \sum_{T \in \psi_2^{-1}(T')} 2^{\ic(T)} \qquad\text{(by \cref{lemm:ic-into-smaller-parts})}.
    \end{align*}
\end{proof}

\subsection{Proof of \texorpdfstring{\cref{theo:psi-is-correct-map}}{Theorem 5.2}}
\begin{proof}[Proof of \cref{theo:psi-is-correct-map}]
    Let $T'$ be a triple-free GT pattern with bottom row $k$.
    By applying \cref{prop:ic-in-alpha-and-beta,prop:omega-in-ic} successively, we obtain
    \begin{equation*}
        \sum_{x} 2^{\ic(x)}
        = \sum_{T} 2^{\ic(T)}
        = 2^{-n} \omega_{\FSA}(T'),
    \end{equation*}
    where $x$ runs over all mixed $6$V configurations on the domain~$\mathcal{M}_\mathbf{k}$, and $T$ runs over all monotone triangles with entries in $\{1 < \overline{1} < 2 < \overline{2} < \cdots < k_n-1 < \overline{k_n-1} < k_n \}$ and with bottom row~$\mathbf{k}$ such that $\psi_2(T) = T'$.
\end{proof}

\section{Proofs of the main results} \label{sec:proof-of-main-results}

\begin{proof}[Proof of \cref{theo:extended-20V-almost-monotone-triangle-correspondence}]
    By composing the probabilistic bijections in \cref{prop:twenty-mixed6V-prob-bij,coro:mixed6V-AGTP-prob-bij}, using \cref{lemma:probabilistic-bijection-composition,theo:inv-crossing}, we obtain a probabilistic bijection between the set of $20$V configurations on the domain~$\mathcal{Q}_\mathbf{k}$, equipped with the constant weight function~$1$, and the set of triple-free GT patterns with bottom row~$\mathbf{k}$, equipped with the weight~$2^{-n} \times \omega_{\FSA}(\cdot)$.
    Hence, the theorem follows from \cref{lemm:weight-preservation}.
\end{proof}

\begin{proof}[Proof of \cref{theo:free-left-boundary-20V-enumeration}]
    By \cref{theo:extended-20V-almost-monotone-triangle-correspondence}, the total number of $20$V configurations on $\mathcal{Q}_\mathbf{k}$, where $\mathbf{k}$ ranges over all strictly increasing sequences $\mathbf{k}$ of $n$ positive integers bounded above by $m+1$, is equal to $2^{-n}$ times the weighted enumeration of triple-free GT patterns with $n$ rows and strictly increasing non-negative bottom row whose entries are bounded above by $m$, with respect to the weight~$\omega_{\FSA}(\cdot)$.
    By \cite[Theorem 1 and Proposition 3]{MR4752187}, this value is given by
    \begin{equation*}
        \prod_{i=1}^n \frac{(m-n+3i+1)_{i-1} (m-n+i+1)_{i}}{(\frac{m-n+i+2}{2})_{i-1} (i)_i}.
        \qedhere
    \end{equation*}
\end{proof}

\section*{Acknowledgements}
I am very grateful to Ilse Fischer for many helpful discussions and comments on the drafts of this paper.
I also thank Marcus Sch\"{o}nfelder for helping me in finding a definition of the variant inversion number, and thank Shane Chern for helping me guess the prefactor of the frozen domains.

\bibliographystyle{alpha}
\bibliography{main}

\end{document}